\documentclass[12pt,twoside]{preprint}
\usepackage{amssymb}
\usepackage{amsmath}
\usepackage{color}
\usepackage{hyperref}
\usepackage{mhenvs}
\usepackage{mhequ}
\usepackage{mhsymb}
\usepackage{booktabs}
\usepackage{tikz}
\usepackage{mathrsfs}
\usepackage{times}
\usepackage{microtype}
\usepackage{comment}

\makeatletter
\pgfdeclareshape{crosscircle}
{
  \inheritsavedanchors[from=circle] 
  \inheritanchorborder[from=circle]
  \inheritanchor[from=circle]{north}
  \inheritanchor[from=circle]{north west}
  \inheritanchor[from=circle]{north east}
  \inheritanchor[from=circle]{center}
  \inheritanchor[from=circle]{west}
  \inheritanchor[from=circle]{east}
  \inheritanchor[from=circle]{mid}
  \inheritanchor[from=circle]{mid west}
  \inheritanchor[from=circle]{mid east}
  \inheritanchor[from=circle]{base}
  \inheritanchor[from=circle]{base west}
  \inheritanchor[from=circle]{base east}
  \inheritanchor[from=circle]{south}
  \inheritanchor[from=circle]{south west}
  \inheritanchor[from=circle]{south east}
  \inheritbackgroundpath[from=circle]
  \foregroundpath{
    \centerpoint%
    \pgf@xc=\pgf@x%
    \pgf@yc=\pgf@y%
    \pgfutil@tempdima=\radius%
    \pgfmathsetlength{\pgf@xb}{\pgfkeysvalueof{/pgf/outer xsep}}%
    \pgfmathsetlength{\pgf@yb}{\pgfkeysvalueof{/pgf/outer ysep}}%
    \ifdim\pgf@xb<\pgf@yb%
      \advance\pgfutil@tempdima by-\pgf@yb%
    \else%
      \advance\pgfutil@tempdima by-\pgf@xb%
    \fi%
    \pgfpathmoveto{\pgfpointadd{\pgfqpoint{\pgf@xc}{\pgf@yc}}{\pgfqpoint{-0.707107\pgfutil@tempdima}{-0.707107\pgfutil@tempdima}}}
    \pgfpathlineto{\pgfpointadd{\pgfqpoint{\pgf@xc}{\pgf@yc}}{\pgfqpoint{0.707107\pgfutil@tempdima}{0.707107\pgfutil@tempdima}}}
    \pgfpathmoveto{\pgfpointadd{\pgfqpoint{\pgf@xc}{\pgf@yc}}{\pgfqpoint{-0.707107\pgfutil@tempdima}{0.707107\pgfutil@tempdima}}}
    \pgfpathlineto{\pgfpointadd{\pgfqpoint{\pgf@xc}{\pgf@yc}}{\pgfqpoint{0.707107\pgfutil@tempdima}{-0.707107\pgfutil@tempdima}}}
  }
}
\makeatother

\colorlet{symbols}{blue!90!black}
\colorlet{testcolor}{green!60!black}


\def\symbol#1{\textcolor{symbols}{#1}}
\def\1{\mathbf{\symbol{1}}}
\def\X{\symbol{X}}

\usetikzlibrary{shapes.misc}
\usetikzlibrary{shapes.symbols}
\usetikzlibrary{snakes}
\usetikzlibrary{decorations}
\usetikzlibrary{decorations.markings}

\def\drawx{\draw[-,solid] (-3pt,-3pt) -- (3pt,3pt);\draw[-,solid] (-3pt,3pt) -- (3pt,-3pt);}
\tikzset{
	root/.style={circle,fill=testcolor,inner sep=0pt, minimum size=2mm},
	dot/.style={circle,fill=black,inner sep=0pt, minimum size=1mm},
	var/.style={circle,fill=black!10,draw=black,inner sep=0pt, minimum size=2mm},
	dotred/.style={circle,fill=black!50,inner sep=0pt, minimum size=2mm},
	generic/.style={semithick,shorten >=1pt,shorten <=1pt},
	dist/.style={ultra thick,draw=testcolor,shorten >=1pt,shorten <=1pt},
	testfcn/.style={ultra thick,testcolor,shorten >=1pt,shorten <=1pt,<-},
	testfcnx/.style={ultra thick,testcolor,shorten >=1pt,shorten <=1pt,<-,
		postaction={decorate,decoration={markings,mark=at position 0.6 with {\drawx}}}},
	kprime/.style={semithick,shorten >=1pt,shorten <=1pt,densely dashed,->},
	kprimex/.style={semithick,shorten >=1pt,shorten <=1pt,densely dashed,->,
		postaction={decorate,decoration={markings,mark=at position 0.4 with {\drawx}}}},
	kernel/.style={semithick,shorten >=1pt,shorten <=1pt,->},
	multx/.style={shorten >=1pt,shorten <=1pt,
		postaction={decorate,decoration={markings,mark=at position 0.5 with {\drawx}}}},
	kernelx/.style={semithick,shorten >=1pt,shorten <=1pt,->,
		postaction={decorate,decoration={markings,mark=at position 0.4 with {\drawx}}}},
	kernel1/.style={->,semithick,shorten >=1pt,shorten <=1pt,postaction={decorate,decoration={markings,mark=at position 0.45 with {\draw[-] (0,-0.1) -- (0,0.1);}}}},
	kernel2/.style={->,semithick,shorten >=1pt,shorten <=1pt,postaction={decorate,decoration={markings,mark=at position 0.45 with {\draw[-] (0.05,-0.1) -- (0.05,0.1);\draw[-] (-0.05,-0.1) -- (-0.05,0.1);}}}},
	kernelBig/.style={semithick,shorten >=1pt,shorten <=1pt,decorate, decoration={zigzag,amplitude=1.5pt,segment length = 3pt,pre length=2pt,post length=2pt}},
	rho/.style={dotted,semithick,shorten >=1pt,shorten <=1pt},
	renorm/.style={shape=circle,fill=white,inner sep=1pt},
	labl/.style={shape=rectangle,fill=white,inner sep=1pt},
	xi/.style={circle,fill=symbols!10,draw=symbols,inner sep=0pt,minimum size=1.2mm},
	xix/.style={crosscircle,fill=symbols!10,draw=symbols,inner sep=0pt,minimum size=1.2mm},
	xib/.style={circle,fill=symbols!10,draw=symbols,inner sep=0pt,minimum size=1.6mm},
	xibx/.style={crosscircle,fill=symbols!10,draw=symbols,inner sep=0pt,minimum size=1.6mm},
	not/.style={circle,fill=symbols,draw=symbols,inner sep=0pt,minimum size=0.5mm},
	>=stealth,
	}
\makeatletter
\def\DeclareSymbol#1#2#3{\expandafter\gdef\csname MH@symb@#1\endcsname{\tikz[baseline=#2,scale=0.15,draw=symbols]{#3}}\expandafter\gdef\csname MH@symb@#1s\endcsname{\scalebox{0.7}{\tikz[baseline=#2,scale=0.15,draw=symbols]{#3}}}}
\def\<#1>{\csname MH@symb@#1\endcsname}
\makeatother

\DeclareSymbol{Xi22}{0.5}{\draw (0,0) node[xi] {} -- (-1,1) node[not] {} -- (0,2) node[xi] {};}

\DeclareSymbol{Xi2}{-2}{\draw (0,-0.25) node[xi] {} -- (-1,1) node[xi] {};}
\DeclareSymbol{Xi3}{0}{\draw (0,0) node[xi] {} -- (-1,1) node[xi] {} -- (0,2) node[xi] {};}
\DeclareSymbol{Xi4}{2}{\draw (0,0) node[xi] {} -- (-1,1) node[xi] {} -- (0,2) node[xi] {} -- (-1,3) node[xi] {};}
\DeclareSymbol{Xi2X}{-2}{\draw (0,-0.25) node[xi] {} -- (-1,1) node[xix] {};}
\DeclareSymbol{XXi2}{-2}{\draw (0,-0.25) node[xix] {} -- (-1,1) node[xi] {};}

\DeclareSymbol{IXi2}{0}{\draw (0,-0.25) node[not] {} -- (-1,1) node[xi] {} -- (0,2) node[xi] {};}
\DeclareSymbol{IXi^2}{-1}{\draw (-1,1) node[xi] {} -- (0,0) node[not] {} -- (1,1) node[xi] {};}

\DeclareSymbol{XiX}{-2.8}{\node[xibx] {};}
\DeclareSymbol{Xi}{-2.8}{\node[xib] {};}
\DeclareSymbol{IXiX}{-1}{\draw (0,-0.25) node[not] {} -- (-1,1) node[xix] {};}

\DeclareSymbol{Xi3b}{-1}{\draw (-1,1) node[xi] {} -- (0,0) node[xi] {} -- (1,1) node[xi] {};}

\DeclareSymbol{IXi3}{2}{\draw (0,-0.25) node[not] {} -- (-1,1) node[xi] {} -- (0,2) node[xi] {} -- (-1,3) node[xi] {};}
\DeclareSymbol{IXi}{-2}{\draw (0,-0.25) node[not] {} -- (-1,1) node[xi] {};}
\DeclareSymbol{XiI}{-2}{\draw (0,-0.25) node[xi] {} -- (-1,1) node[not] {};}

\DeclareSymbol{Xi4b}{-1}{\draw(0,1.5) node[xi] {} -- (0,0); \draw (-1,1) node[xi] {} -- (0,0) node[xi] {} -- (1,1) node[xi] {};}
\DeclareSymbol{Xi4b'}{-1}{\draw(0,1.5) node[xi] {} -- (0,-0.2); \draw (-1,1) node[xi] {} -- (0,-0.2) node[not] {} -- (1,1) node[xi] {};}
\DeclareSymbol{Xi4c}{0}{\draw (0,1) -- (0.8,2.2) node[xi] {};\draw (0,-0.25) node[xi] {} -- (0,1) node[xi] {} -- (-0.8,2.2) node[xi] {};}
\DeclareSymbol{Xi4d}{-4.5}{\draw (0,-1.5) node[not] {} -- (0,0); \draw (-1,1) node[xi] {} -- (0,0) node[xi] {} -- (1,1) node[xi] {};}
\DeclareSymbol{Xi4e}{0}{\draw (0,2) node[xi] {} -- (-1,1) node[xi] {} -- (0,0) node[xi] {} -- (1,1) node[xi] {};}
\DeclareSymbol{Xi4e'}{0}{\draw (0,2) node[xi] {} -- (-1,1) node[xi] {} -- (0,-0.2) node[not] {} -- (1,1) node[xi] {};}

\def\sXi{\symbol{\Xi}}

\newtheorem{assumption}[lemma]{Assumption}


\let\I\CI
\let\J\CJ
\let\M\CM
\let\D\CD
\def\s{\mathfrak{s}}

\def\K{\mathfrak{K}}
\def\DeltaM{\Delta^{\!M}}

\def\Ren{\mathscr{R}}
\def\MM{\mathscr{M}}
\def\TT{\mathscr{T}}
\def\RR{\mathfrak{R}}
\def\GGamma#1{\Gamma_{\!#1}}
\def\bGGamma#1{\bar \Gamma_{\!#1}}
\def\bbar#1{\bar{\bar #1}}
\def\${|\!|\!|}

\begin{document}

\title{A Wong-Zakai theorem for stochastic PDEs}
\author{Martin Hairer$^1$ and \'Etienne Pardoux$^2$}
\institute{University of Warwick, UK, \email{M.Hairer@Warwick.ac.uk} 
\and  Aix-Marseille Universit\'e, CNRS, Centrale Marseille, I2M, UMR 7373 13453 Marseille, France, \email{etienne.pardoux@univ-amu.fr}}

\maketitle

\begin{abstract}
We prove a version of the Wong-Zakai theorem for
one-dimensional parabolic nonlinear stochastic PDEs
driven by space-time white noise. As a corollary, we obtain
 a detailed local description of solutions.\\[.5em]
 
\noindent\textit{Dedicated to the memory of Kiyosi It\^o on the
occasion of the 100th anniversary of his birth.}
\end{abstract}

\setcounter{tocdepth}{2}
\tableofcontents

\section{Introduction}

A series of classical results pioneered 
by Wong and Zakai \cite{WongZakai2,WongZakai} states that 
if $B_\eps$ denotes some ``natural'' smooth $\eps$-approximation to a $d$-dimensional
Brownian motion $B$ (for example piecewise linear approximation or convolution with a mollifier), $g$ and $h$ are smooth functions, 
and $x_\eps$ denotes the solution to the ODE
\begin{equ}[e:SDE]
\dot x_\eps = h(x_\eps) + g(x_\eps)\dot B_\eps\;,
\end{equ}
then $x_\eps$ converges in probability, as $\eps \to 0$, to the solution
to the SDE 
\begin{equ}
dx = h(x)\,dt + g(x)\circ dB\;,
\end{equ}
where $\circ dB$ denotes Stratonovich integration against $B$.

It has been an open problem for some time to obtain an analogous result
in the case of stochastic PDEs driven by space-time white noise of the type
\begin{equ}[e:SPDE]
du = \d_x^2 u\,dt + H(u)\,dt + G(u)\,dW(t)\;.
\end{equ}
The problem is that there is no Stratonovich formulation for such an
equation since the It\^o-Stratonovich correction term would be infinite.
It is formally given by ${1\over 2} G'(u)G(u) \tr Q$ where $tQ$ is the covariance
operator of $W(t)$. In the case of space-time white noise, $Q$ is
the identity operator on $L^2$, which is of course not trace class.
On the other hand, we know that if one subtracts a suitable correction term
from the random ODE \eref{e:SDE}, then it is possible to ensure
that solutions converge to
the It\^o solution. More precisely, if one considers the sequence of
equations given by
\begin{equ}[e:SDEIto]
\dot x_\eps = h(x_\eps) - {1\over 2} Dg(x_\eps) \,g(x_\eps) + g(x_\eps)\dot B_\eps\;,
\end{equ}
then $x_\eps$ converges, as $\eps \to 0$, to the It\^o solution to
$dx = h(x)\,dt + g(x)\,dB$. Since the It\^o solution is the only
``natural'' notion of solution available for \eref{e:SPDE}, this suggests that
if one considers approximations of the type
\begin{equ}[e:SPDEapprox]
\d_t u_\eps = \d_x^2 u_\eps + H(u_\eps) - C_\eps G'(u_\eps)G(u_\eps) + 
G(u_\eps)\,\xi_\eps\;,
\end{equ}
where $\xi_\eps$ denotes an $\eps$-approximation to space-time white
noise and $C_\eps$ is a suitable constant which diverges as $\eps \to 0$,
then one might expect $u_\eps$ to converge to the solution to \eref{e:SPDE},
interpreted in the It\^o sense.

The main result of this article demonstrates that this is \textit{almost} the case,
at least for a large class of Gaussian approximations to space-time white noise,
and on bounded domains. While it is true that the solutions $u_\eps$ to \eref{e:SPDEapprox}
converge to a limit if $C_\eps$ is suitably chosen, it is \textit{not} true in general 
that this limit is given by the It\^o solution to \eref{e:SPDE}. Instead, the limit solves
the same equation, but with a modified drift term $H$. Alternatively, this can be formulated 
as stating that, in order to obtain the limiting equation \eref{e:SPDE}, the ``correct'' approximations
are of the form \eref{e:SPDEapprox}, but with $H$ replaced by some different nonlinearity $\bar H$.
It furthermore turns out that the constants describing $\bar H$ are not universal, but depend
on the details of the regularisation $\xi_\eps$.

In order to formulate our results more precisely, we fix an even (in the sense that $\rho(t,x) = \rho(t,-x)$), smooth, 
compactly supported function $\rho\colon \R^2 \to \R$ with $\int \rho = 1$ and we set 
\begin{equ}[e:crhofirst]
\rho_\eps(t,x) = \eps^{-3} \rho(\eps^{-2}t,\eps^{-1}x)\;,\qquad
c_\rho = \int P(z) (\rho \star \rho)(z)\,dz\;,
\end{equ}
where $P$ denotes the heat kernel on $\R$ and $\star$ denotes space-time convolution.
Here and everywhere in the paper, we use the {\it parabolic scaling}, which means that $t$ is rescaled as if it were 
a 2--dimensional variable. The distance associated to this scaling is defined by 
$|(t,x)|=\sqrt{|t|}+|x|$.
We also define our regularised noise $\xi_\eps$ by
\begin{equ}[e:defxieps]
\xi_\eps(t,x) = \int_{-\infty}^\infty \scal{\rho_\eps(t-s, x-\cdot),dW(s)}\;,
\end{equ}
where $W$ is the cylindrical Wiener process driving \eref{e:SPDE}.
Finally, we will assume from now on that the spatial variable $x$ takes values in the 
one-dimensional torus $S^1$, so that $\scal{\cdot,\cdot}$ denotes the scalar
product in $L^2(S^1)$. 
With these notations at hand, the main result of this article is as follows.

\begin{theorem}\label{theo:main}
Let $H$ and $G$ be of classes $\CC^2$ and $\CC^5$ respectively, both with bounded first derivatives, 
let $u$ denote the solution to \eref{e:SPDE}, and let $u_\eps$ denote the classical solution 
to the random PDE \eref{e:SPDEapprox} with $C_\eps = \eps^{-1} c_\rho$ and $H$ replaced by
\begin{equ}[e:defbarH]
\bar H(u) = H(u) - c_\rho^{(1)} G'(u)^3G(u) - c_\rho^{(2)}G''(u)G'(u)G^2(u)\;,
\end{equ}
for some constants $c_\rho^{(i)}$ possibly depending on $\rho$ but not on $\eps$.
Both solutions are started with the same initial condition $u_\eps(0,\cdot) = u(0,\cdot) \in \CC(S^1)$.

Then, there exists a choice of $c_\rho^{(i)}$ such that, for any $T > 0$, one has
\begin{equ}
\lim_{\eps \to 0} \sup_{(t,x) \in [0,T] \times S^1} |u(t,x) - u_\eps(t,x)| = 0\;,
\end{equ}
in probability. Moreover, for any $\alpha \in (0,{1\over 2})$ and $t > 0$, the 
restriction of $u_\eps$ to
$[t,T] \times S^1$ converges to $u$ in probability for the topology
of $\CC^{\alpha/2,\alpha}$.
\end{theorem}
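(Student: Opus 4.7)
The plan is to recast \eref{e:SPDE} within the framework of regularity structures. First, I would build a regularity structure $(\mathscr{A},\TT,\mathscr{G})$ containing an abstract noise symbol $\Xi$ of homogeneity $-\tfrac32-\kappa$ (for small $\kappa>0$) and an abstract integration operator $\mathcal{I}$ corresponding to the heat kernel, together with enough polynomial and product symbols to formulate \eref{e:SPDE} as an abstract fixed point problem of the form $U = \mathcal{I}\bigl(G(U)\,\Xi + H(U)\bigr) + \bar U_0$ in a suitable space of modelled distributions parametrised by an admissible model. By the general arguments of the theory, this fixed point problem is locally well-posed and its reconstruction depends continuously on the underlying model. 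From the regularisation $\xi_\eps$, I would then construct two admissible models: the canonical model $Z_\eps$, obtained by interpreting products in the literal pointwise sense, and a renormalised model $\hat Z_\eps = M_\eps Z_\eps$, where $M_\eps$ lies in the renormalisation group $\RR$ associated to the structure.

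The core of the argument is to show that with $M_\eps$ built from the divergent constant $C_\eps = \eps^{-1} c_\rho$ together with two finite constants $c_\rho^{(1)}, c_\rho^{(2)}$ attached to two further sub-divergent symbols, the renormalised model $\hat Z_\eps$ converges in probability to a limiting admissible model $\hat Z$ realising the space-time white noise $\xi$, in the natural topology on models. Only these three counter-terms are needed because the evenness of $\rho$, combined with the parity properties of spatial derivatives of the heat kernel, eliminates the contribution of all other symbols of negative homogeneity to the renormalisation. The convergence itself reduces, symbol by symbol, to uniform moment bounds for elements of a finite (low-order) inhomogeneous Wiener chaos over $W$, which I would handle by a Kolmogorov/BPHZ-type criterion applied to the kernels defining each renormalised tree.

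With model convergence in hand, the next step is to identify the classical equation solved by the reconstruction of $U$ under $\hat Z_\eps$. A direct computation on the three renormalised trees produces exactly the three counter-terms $C_\eps G'(u)G(u)$, $c_\rho^{(1)} G'(u)^3 G(u)$ and $c_\rho^{(2)} G''(u)G'(u)G(u)^2$, so that this reconstruction is precisely the classical solution to \eref{e:SPDEapprox} with $H$ replaced by $\bar H$ as in \eref{e:defbarH}. In parallel, I would verify that the reconstruction under the limiting model $\hat Z$ agrees with the It\^o mild solution $u$ of \eref{e:SPDE}, by matching with a standard Da Prato--Zabczyk computation for approximations where both the noise is mollified and the compensating It\^o correction is then removed. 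Continuity of the solution map on the space of models then gives uniform convergence on $[0,T]\times S^1$, while the parabolic H\"older convergence on $[t,T]\times S^1$ for $t>0$ follows from the Schauder-type regularity of modelled distributions away from the initial time, together with an interpolation argument. The main obstacle is the stochastic step: identifying the complete list of divergent and sub-divergent symbols, checking that the prescribed three counter-terms are simultaneously necessary and sufficient, and obtaining uniform-in-$\eps$ moment bounds on the renormalised kernels. Once this is in place, both the identification of $\bar H$ and the passage to the limit follow essentially deterministically from the continuity of the reconstruction map.
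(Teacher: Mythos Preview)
Your overall strategy coincides with the paper's: build the regularity structure, formulate the abstract fixed point problem, renormalise the canonical model with the three constants $C_\eps, c_\rho^{(1)}, c_\rho^{(2)}$, prove convergence of the renormalised models, identify the renormalised equation at positive $\eps$ via Proposition~\ref{prop:identify}, and conclude by continuity of the solution map.

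There is, however, one genuine gap. Your proposed identification of the reconstruction under the limiting model $\hat Z$ with the It\^o solution --- ``by matching with a standard Da Prato--Zabczyk computation for approximations where both the noise is mollified and the compensating It\^o correction is then removed'' --- is not a viable shortcut. The limiting model is only defined as a limit of renormalised models, and there is no a priori reason why its reconstruction operator should produce It\^o integrals; the paper explicitly remarks that neither the naive It\^o nor the Skorokhod interpretation of $\hat\Pi_z(\sXi\tau)$ is correct for general test functions. Any approximation argument of the type you sketch is either circular (it presupposes that \emph{some} renormalised model reconstructs to an It\^o object) or requires proving model convergence for a second, separate family of regularisations and then matching the two limits. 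The paper instead establishes directly, symbol by symbol during the convergence proof (Theorem~\ref{theo:convModel}), that the limiting model satisfies
\[
\bigl(\hat \Pi_{(t,x)}\sXi\tau\bigr)(\phi) = \int_t^\infty \bigl\langle \bigl(\hat \Pi_{(t,x)} \tau\bigr)(s,\cdot)\,\phi(s,\cdot),\, dW(s)\bigr\rangle
\]
for every $\tau \in \CU$ and every test function $\phi$ supported in the \emph{future} $\{s>t\}$, exploiting the non-anticipativity of $K$. This is then fed into Theorem~\ref{theo:ident}, which uses a multiresolution analysis with scaling function supported in $\R_+$ to show that $\CR(V\sXi)$ is a genuine It\^o integral for adapted $V$, whence Corollary~\ref{cor:ItoSol}.

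A smaller point: your explanation that ``evenness of $\rho$ combined with parity'' restricts the renormalisation to three constants is not the actual mechanism. The divergent constant $C_\eps$ is attached to a map $L$ acting on \emph{every} symbol containing $\<Xi2>$ as a sub-tree (see \eqref{e:defL}), not to a single symbol; the finite constants $c_\rho^{(1)}, c_\rho^{(2)}$ are attached specifically to $\<Xi4>$ and $\<Xi4e>$. That $\<Xi4b>$ and $\<Xi4c>$ require no further constants is an outcome of the explicit Wiener-chaos computations in Section~\ref{sec:convergence}, not of a symmetry argument.
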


Before we proceed, let us remark on several possible straightforward (and not so straightforward)
extensions to Theorem~\ref{theo:main}, as well as a few reality-checks.

\begin{remark}
There are explicit formulae for the constants $c_\rho^{(i)}$, see Section~\ref{sec:constants} below.
One important remark is that if we set $\rho(t,x) = \delta^{-1} \tilde \rho(\delta^{-1}t, x)$ for some fixed $\tilde \rho$ and let $\delta \to 0$, then we have $c_\rho^{(i)} \to 0$. 
This is not too
surprising since it intuitively corresponds to first removing the temporal regularisation at fixed
spatial regularisation
(there, traditional generalisations of the Wong-Zakai theorem apply and only an It\^o-Stratonovich correction term appears) and then 
only removing the spatial regularisation, under which we know that \eref{e:SPDE} is 
stable \cite{DPZ}.
\end{remark}

\begin{remark}
As will be clear, our proof automatically yields a situation analogous to that
arising when building a solution theory to finite-dimensional SDEs using the theory
of rough paths \cite{MR2314753} (see also \cite{MR2599193} for an earlier attempt in this direction in the context of evolution equations): one obtains a 
natural notion of solution to \eqref{e:SPDE}
which is pathwise, i.e.\ defined for some set of measure $1$ independent of the
initial condition and even of the choice of nonlinearities $G$ and $H$.
Note however that even if $G$ and $H$ are bounded with bounded derivatives
of all orders, our construction does \textit{not} yield the existence of a stochastic
flow, but only of local solutions. The reason in a nutshell is that 
in order to obtain our convergence result we reformulate
\eqref{e:SPDEapprox} as a fixed point problem in a space of functions of high ``regularity''
(where regularity is to be understood in an unconventional sense though). In this space,
just like in the classical spaces $\CC^\alpha$ for $\alpha > 1$,
the map $u \mapsto G(u)$ is not globally Lipschitz continuous, even if all 
derivatives of $G$ are bounded. As a consequence, we cannot rule out that exceptional realisations
of the driving noise and exceptional initial conditions lead to a finite-time blow-up.
\end{remark}

\begin{remark}
Using the same methodology, one can also treat systems of equations of the type
\begin{equ}
d u_i = \d_x^2 u_i\,dt + H_i(u)\,dt + G_{ij}(u)\,dW_j(t)\;,
\end{equ}
with summation over repeated indices implied and $W_j$ a finite collection of i.i.d.\ cylindrical
Wiener processes. The only difficulty that arises is notational, so we will stick to the case
of one single equation.

In the multi-dimensional  case, it is however not obvious \textit{a priori} how indices are paired in the
correction terms. The ``It\^o-Stratonovich'' correction multiplying $C_\eps$
has the same pairing of indices as in the case of classical SDEs, namely 
$\d_k G_{ij}(u)\, G_{kj}(u)$, with summation over $k$ and $j$ implied. Keeping track of the
indices in \eref{e:RHS} below shows that the additional order $1$
correction term is given by
\begin{equs}[e:corrMultiple]
\bar H_i(u) &= H_i(u) - c_\rho^{(1,1)}\, \d_k G_{ij}(u)\,\d_\ell G_{km}(u)\, \d_n G_{\ell j}(u)\, G_{nm}(u) \\
&\qquad - c_\rho^{(1,2)}\, \d_k G_{ij}(u)\,\d_\ell G_{km}(u)\, \d_n G_{\ell m}(u)\, G_{nj}(u) \\
&\qquad - c_\rho^{(2,1)}\, \d_{k\ell}^2 G_{ij}(u)\,\d_m G_{kn}(u)\,G_{mj}(u)\,G_{\ell n}(u)\\
&\qquad - c_\rho^{(2,2)}\, \d_{k\ell}^2 G_{ij}(u)\,\d_m G_{kj}(u)\,G_{mn}(u)\,G_{\ell n}(u)\;,
\end{equs}
with implied summation over repeated indices. The constants $c_\rho^{(i,j)}$ appearing in this
expression will be given in Section~\ref{sec:constants} and are such that
$c_\rho^{(1)} = c_\rho^{(1,1)} + c_\rho^{(1,2)}$ and similarly for $c_\rho^{(2)}$.
\end{remark}

\begin{remark}
The appearance of a correction term involving second derivatives of $G$ explains why we need $G$
to be at least of class $\CC^3$. The reason why we actually need it to be of class
$\CC^5$ is twofold. First, at this level of ``irregularity'', one could in principle also have 
the appearance of correction terms involving the third derivative of $G$. (See in particular
the right hand side of \eref{e:RHS} below.) In our case however, the corresponding term vanishes
thanks to the specific properties of Gaussian random variables. Second, in order to guarantee that the
expression \eqref{e:RHS} below belongs to $\CD^{\gamma,\eta}$ for some $\gamma > 0$, we need some
control on the next order terms, so that some additional regularity beyond $\CC^4$ is required.
\end{remark}

\begin{remark}
If, in the first expression of \eref{e:defcrho}, we assume that the integration 
variable $z$ is real-valued and 
that $P$ is the Heaviside function (which is nothing but the Green's function for the 
differential operator $\d_t$ on $\R$), then we obtain $c_\rho = {1\over 2}$, independently
of $\rho$, as a consequence of the fact that $\rho^{(2)}$ is symmetric and integrates to $1$. 
This shows that the dominant correction term is indeed compatible with its interpretation
as an It\^o-Stratonovich correction.
\end{remark}

\begin{remark}
In law, the noise $\xi_\eps(t,x)$ is the same as $\eps^{-3/2} \xi_1(\eps^{-2} t, \eps^{-1} x)$.
It is then natural to ask whether a similar Wong-Zakai theorem still holds if 
one sets $\xi_\eps(t,x) = \eps^{-3/2} \xi_1(\eps^{-2} t, \eps^{-1} x)$, but with $\xi_1$ an
arbitrary (non-Gaussian) space-time stationary process with sufficiently rapid mixing properties.
It is at this stage still unclear whether \eref{e:SPDEapprox}
is the correct approximation in this case, as there may be additional correction terms needed.
In particular, it seems plausible that correction terms of order $\eps^{-1/2}$ appear in general.
\end{remark}

\begin{remark}
If one sets $\xi_\eps(t,x) = \eps^{-1} \xi_1(\eps^{-2} t, \eps^{-1} x)$,
then the sequence of solutions to \eref{e:SPDEapprox} with $C_\eps = 0$ converges to a 
deterministic limit, given by the solution to the corrected PDE
\begin{equ}
\d_t u = \d_x^2 u + H(u) + c_\rho G'(u) G(u)\;.
\end{equ}
See \cite{PP12} for a treatment of the linear case.
\end{remark}

\begin{remark}
The results presented in this article
are somewhat orthogonal to the results of \cite{Hendrik} where the
authors considered a more general class of (also possibly vector-valued) equations
of the type
\begin{equ}
du = \d_x^2 u\,dt + H_1(u)\,dt + H_2(u)\d_x u\,dt + G(u)\,dW\;,
\end{equ}
for $W$ an $L^2$-cylindrical Wiener process. Indeed, the
problems tackled in \cite{Hendrik} include a larger class of equations
and allows to consider approximations where the linear operator $\d_x^2$ 
is replaced by a quite general $\eps$-approximation to the Laplacian.
However, the allowable approximations to $W$ included only \textit{spatial}
regularisations (with the integral already treated as an It\^o integral for
$\eps > 0$), while the whole point of the current article is to allow
for \textit{space-time} regularisations. In particular, no additional
correction term appears in \cite{Hendrik}.
\end{remark}

Theorem~\ref{theo:main} is a consequence of a much stronger convergence 
result given in Theorem~\ref{theo:mainFull} below. The statement of this stronger result 
does however first require the introduction of the algebraic / analytical machinery
of regularity structures developed in \cite{Regularity}, so we refrain from stating it here.
As a consequence of this stronger result, one immediately obtains a number
of results that have traditionally been considered out of the scope
of current techniques of proof. For example, it immediately follows that 
solution to \eref{e:SPDE} are locally continuous with respect to their
initial condition in a pathwise sense, rather than just in probability.

\begin{corollary}\label{cor:flow}
Let $H$ be $\CC^2$ and $G$ be $\CC^5$, both with bounded first derivatives. 
Then, for every $u_0 \in \CC(S^1)$, every $T>0$, and every 
$\delta > 0$, there exists
a neighbourhood $\CU$ of $u_0$, an event $\Omega_0 \subset \Omega$ with $\P(\Omega_0) > 1-\delta$
and a map $\Phi\colon \CU \times \Omega \to \CC([0,T] \times S^1)$
such that $\Phi(\cdot,\omega)$ is continuous for every $\omega \in \Omega_0$ and such that 
$\Phi(\bar u_0, \cdot)$ solves \eref{e:SPDE} with initial condition $\bar u_0$ for every
$\bar u_0 \in \CU$.
\end{corollary}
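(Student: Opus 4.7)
The plan is to derive the corollary from the pathwise solution theory that underlies Theorem~\ref{theo:main}, i.e.\ from the stronger Theorem~\ref{theo:mainFull}. In the language of regularity structures, one constructs a measurable random model $\hat Z\colon\Omega\to\MM$, with $\MM$ a Polish space of admissible models, and for every fixed $Z\in\MM$ and initial condition $\bar u_0\in\CC(S^1)$ the equation \eref{e:SPDE} is reformulated as a fixed point problem in a space of modelled distributions. The key analytic input I would extract from Theorem~\ref{theo:mainFull} is that the solution map $\mathcal{S}\colon(\bar u_0,Z)\mapsto \bar u$, together with its maximal existence time $\tau(\bar u_0,Z)\in(0,\infty]$, is jointly continuous on the open set $\{\tau>T\}$, with $\bar u\in\CC([0,T]\times S^1)$; lower semicontinuity of $\tau$ and continuity of $\mathcal{S}$ there are by now standard outputs of the abstract fixed point theorem of \cite{Regularity}.

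First I would fix $u_0$, $T$ and $\delta$ and, using $\tau(u_0,\hat Z)>0$ almost surely (local existence), select an event $\Omega_0\subset\Omega$ with $\P(\Omega_0)>1-\delta$ on which simultaneously (i) $\tau(u_0,\hat Z)>T$ and (ii) $\hat Z$ takes values in some fixed compact subset $\K\subset\MM$. Item (ii) is secured by inner regularity of the law of $\hat Z$ on the Polish space $\MM$, the random variable $\hat Z$ being well-defined as the a.s.\ limit of the canonical lifts of the mollified noises underlying Theorem~\ref{theo:mainFull}. By joint continuity, the set of $\bar u_0\in\CC(S^1)$ for which $\tau(\bar u_0,Z)>T$ for \emph{every} $Z\in\K$ is open and contains $u_0$; I pick a neighbourhood $\CU$ of $u_0$ inside this set and define $\Phi(\bar u_0,\omega)=\mathcal{S}(\bar u_0,\hat Z(\omega))$ on $\CU\times\Omega$. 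By construction $\Phi(\cdot,\omega)$ is continuous for every $\omega\in\Omega_0$, and $\Phi(\bar u_0,\cdot)$ coincides with the limit of $u_\eps$ supplied by Theorem~\ref{theo:main}, hence solves \eref{e:SPDE} with initial condition $\bar u_0$.

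The main obstacle is precisely the uniformity hidden in the quantifier ``for every $Z\in\K$'': a single neighbourhood $\CU$ must work simultaneously for all $\omega\in\Omega_0$, which requires (a) a uniform positive lower bound on $\tau(\bar u_0,Z)-T$ over $(\bar u_0,Z)\in\CU\times\K$, and (b) a modulus of continuity of $\bar u_0\mapsto\mathcal{S}(\bar u_0,Z)$ that is uniform as $Z$ ranges over $\K$. Both follow from joint continuity of $\mathcal{S}$ and $\tau$ combined with compactness of $\K$, via a standard tube argument around the compact set $\{u_0\}\times\K$. This is also the structural reason why one has to discard an event of probability $\delta$ and cannot upgrade the conclusion to a global stochastic flow, consistent with the second remark following Theorem~\ref{theo:main}.
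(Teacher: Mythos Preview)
Your approach is essentially the same as the paper's: the paper's proof consists of a single sentence invoking the local Lipschitz continuity of the solution map from Theorem~\ref{theo:gen} together with the identification of solutions given by Corollary~\ref{cor:ItoSol}, and your proposal is a correct and more detailed fleshing out of precisely this argument (the compactness/tube argument you describe is exactly what is implicitly needed to turn ``locally Lipschitz in $(\bar u_0,Z)$'' into a single neighbourhood $\CU$ that works for all $\omega\in\Omega_0$).

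One small point to tighten: as written, you only arrange that $\tau(u_0,\hat Z(\omega))>T$ for $\omega\in\Omega_0$ and $\hat Z(\omega)\in\K$, but then claim $\tau(u_0,Z)>T$ for \emph{every} $Z\in\K$, which does not follow unless $\K$ is chosen inside the open set $\{Z:\tau(u_0,Z)>T\}$. This is easily fixed: since $H$ and $G$ have bounded first derivatives, the It\^o solution is global, so $\{Z:\tau(u_0,Z)>T\}$ has full measure under the law of $\hat Z$ and is open by lower semicontinuity of $\tau$; inner regularity then produces a compact $\K$ contained in this set with $\P(\hat Z\in\K)>1-\delta$, after which your tube argument around $\{u_0\}\times\K$ goes through verbatim.
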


One also obtains the following very sharp regularity result (see for example
the recent article \cite{DavarWeak} for a similar but weaker result):

\begin{corollary}\label{cor:regular}
Let $H$ and $G$ be of class $\CC^2$ and $\CC^5$ respectively, both with bounded first derivatives,
and let $u$ denote the solution to \eref{e:SPDE}. Let furthermore $v$ denote the solution
to the linear equation
\begin{equ}
dv = \d_x^2 v\, dt + dW\;,\qquad  v(0,x) = 0\;.
\end{equ}
Then, for every random space-time point $(t,x) \in \R_+ \times S^1$ and every $\kappa > 0$,
there exists a random constant $C$ such that the bound
\begin{equ}
\bigl|u(s,y) - u(t,x) - G(u(t,x))\bigl(v(s,y) - v(t,x)\bigr)\bigr|
\le C (\sqrt{|t-s|} + |x-y|)^{1-\kappa}\;,
\end{equ}
holds uniformly over $y\in S^1$ and $|t-s| < 1 \wedge |t|/2$.
\end{corollary}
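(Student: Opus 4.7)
The plan is to read off the estimate directly from the modelled distribution representation of $u$ provided by the stronger result Theorem~\ref{theo:mainFull}, using the defining property of $\CD^{\gamma,\eta}$. The corollary is essentially a pointwise "Taylor expansion" statement, and in the regularity structures framework such expansions are built into the very definition of the solution space.

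First, I would invoke Theorem~\ref{theo:mainFull} to write the solution as $u = \Ren U$, the reconstruction of a modelled distribution $U \in \CD^{\gamma,\eta}$ with $\gamma > 1$ valued in the regularity structure associated to \eref{e:SPDE}. By the form of the fixed point equation used to construct $U$, its low-homogeneity part admits the explicit expansion
\begin{equ}
U(z) = u(z)\,\1 + G(u(z))\,\sXi_v + R(z)\;,
\end{equ}
where $\sXi_v = \I(\sXi)$ is the abstract symbol of homogeneity ${1\over 2}-\kappa$ corresponding to the stochastic convolution $v$, and $R(z)$ consists only of symbols of homogeneity $\ge 1-\kappa$. (The next symbols that appear, of the form $\I(\I(\sXi)\sXi)$, have precisely this homogeneity.)

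Next, I would use the canonical model: by construction, $\bigl(\Pi_{(t,x)} \sXi_v\bigr)(s,y) = v(s,y) - v(t,x)$, since this symbol has homogeneity in $(0,1)$ and so its model is $v$ minus its unique "polynomial of negative degree" centred at $(t,x)$, namely the constant $v(t,x)$. All symbols appearing in $R$ have homogeneity $\ge 1-\kappa$, so by the bounds defining the model one has $|(\Pi_{(t,x)} R(t,x))(s,y)| \lesssim (\sqrt{|t-s|} + |x-y|)^{1-\kappa}$, locally uniformly on any compact set avoiding $t=0$. Combining these with the reconstruction bound
\begin{equ}
\bigl| u(s,y) - \bigl(\Pi_{(t,x)}U(t,x)\bigr)(s,y)\bigr| \lesssim \bigl(\sqrt{|t-s|}+|x-y|\bigr)^\gamma
\end{equ}
(valid pointwise because $u$ is continuous and all symbols of non-negative homogeneity here are represented by continuous functions), and inserting the above decomposition of $U(t,x)$, the desired bound follows with a constant $C = C(\omega,t,x)$ that depends on local bounds of the model and of $U$ near $(t,x)$.

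The only nontrivial step is the identification $\Pi_{(t,x)}\sXi_v = v(\cdot) - v(t,x)$, which requires checking that the specific renormalised model built for Theorem~\ref{theo:mainFull} assigns to the symbol $\I(\sXi)$ precisely the recentred stochastic convolution. This is immediate for the canonical (smooth) approximating models $\Pi^\eps$ applied to $\xi_\eps$, since the tree $\I(\sXi)$ does not need renormalisation, and it passes to the limit by the convergence of models established in the proof of Theorem~\ref{theo:mainFull}. Once this identification is in place, the result is just bookkeeping; the truly hard work has been done in setting up the regularity structure and proving convergence of the renormalised models.
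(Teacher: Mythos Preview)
Your approach is essentially the paper's own: exploit the explicit form \eref{e:formU} of the modelled distribution $U$, note that the lowest non-trivial symbol beyond $\<IXi>$ has homogeneity $1-2\kappa$, and read off the bound from the definition of $\CD^{\gamma,0}$ together with the analytic bounds on the model.

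There is, however, a genuine (if easily repaired) gap in the step you single out as ``the only nontrivial step'': the exact identity $\bigl(\Pi_{(t,x)}\CI(\sXi)\bigr)(s,y) = v(s,y) - v(t,x)$ is \emph{false}. Admissible models are built from the truncated kernel $K$ of Section~\ref{sec:models}, which differs from the periodic heat kernel $P$ by a smooth function and has no knowledge of the initial time $0$. Hence $\Pi_z \CI(\sXi)$ is the recentred function $K\ast\xi - (K\ast\xi)(z)$, not the recentred stochastic convolution $v$. Your limiting argument from $\Pi^\eps$ does not help: already for the canonical model one has $\Pi_z^\eps\CI(\sXi)=K\ast\xi_\eps-(K\ast\xi_\eps)(z)$, not the expression with $P$. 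The paper deals with this by writing
\[
v(\bar z)-v(z) = \bigl(\Pi_z\,\<IXi>\bigr)(\bar z) + R_v(z,\bar z)\;,
\]
where the remainder $R_v$ (coming from the smooth difference $P-K$, the periodisation, and the contribution of the initial condition) satisfies $|R_v(z,\bar z)|\lesssim (|t|\wedge|\bar t|)^{-1/4}\bigl(|x-\bar x|+\sqrt{|t-\bar t|}\bigr)$. Under the constraint $|t-s|<|t|/2$ this is harmless and is absorbed into the $(\cdot)^{1-\kappa}$ bound. Once you insert this correction, your argument goes through and coincides with the paper's.
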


\begin{remark}
Note here that the time $t$ does not need to be a stopping time! This is a
consequence of the pathwise nature of our analysis.
\end{remark}

As a final application of our results, we can show that if the solutions to two stochastic PDEs driven by the 
same realisation of the noise and with the same diffusion coefficient $G$
cross, then they are tangent to even higher order at their intersection.

\begin{corollary}\label{cor:coincide}
Let $H$, $\bar H$ and $G$ be $\CC^2$, $\CC^2$ 
and $\CC^5$ respectively with bounded first derivatives and let $u$ and $\bar u$ denote
the solutions to \eref{e:SPDE}, as well as the same equation with $H$
replaced by $\bar H$. Let $(t,x) \in \R_+\times S^1$ be a random space-time point
such that $u(t,x) = \bar u(t,x)$ and let $\kappa > 0$.

Then, there exist random constants $C$ and $D$ such that
\begin{equ}
\bigl|u(s,y) - \bar u(s,y) - D(y-x)\bigr|
\le C (|t-s| + |x-y|^2)^{{3\over 4} - \kappa}\;,
\end{equ}
holds uniformly for $(s,y)$ in a compact subset of $\R_+ \times S^1$. 
(With $C$ depending on the set.) If one has $H = \bar H$ and one
knows furthermore that $u(s,y) \ge \bar u(s,y)$ in 
a region of the type $\{(s,y)\,:\, |x-y| \le \delta \;\&\; s \in (t-\delta,t]\}$ for some
$\delta > 0$, then this bound improves to 
\begin{equ}
\bigl|u(s,y) - \bar u(s,y) - D_1(t-s) - D_2(y-x)^2\bigr|
\le C (|t-s| + |x-y|^2)^{{5\over 4} - \kappa}\;,
\end{equ}
and furthermore $D_1, D_2 \ge 0$.
\end{corollary}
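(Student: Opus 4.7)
The plan is to leverage the regularity-structure formulation of Theorem~\ref{theo:mainFull} to produce local jet expansions of $u$ and $\bar u$ at the common point $(t,x)$, and to exploit the coincidence $u(t,x) = \bar u(t,x)$ to cancel every noise-involving contribution whose coefficient is a function of the pointwise value (and, in the second scenario, of shared lower-order polynomial jet data), leaving only a handful of explicit polynomial terms whose size governs the stated bounds.

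Concretely, write $u = \mathcal R U$ and $\bar u = \mathcal R \bar U$ for $U,\bar U \in \mathcal D^{\gamma,\eta}$ solving the abstract fixed-point lift of \eqref{e:SPDE} in the regularity structure of Theorem~\ref{theo:mainFull}, for some $\gamma$ slightly larger than $5/2$. At the base point $z = (t,x)$ the expansion reads, schematically,
\[
U(z) = \phi\,\mathbf 1 + \phi_1 X_1 + \phi_0 X_0 + \phi_{11} X_1^2 + G(\phi)\,\mathcal I(\Xi) + G'(\phi)G(\phi)\,\mathcal I(\mathcal I(\Xi)\Xi) + G'(\phi)\phi_1\,\mathcal I(X_1\Xi) + \cdots,
\]
each coefficient of a noise-involving basis vector being an explicit polynomial in $(\phi,\phi_1,\phi_0,\phi_{11},\dots)$ and in the derivatives $G^{(k)}(\phi)$; the very same formulas apply to $\bar U$ with $(\bar\phi,\bar\phi_1,\dots)$ in place of $(\phi,\phi_1,\dots)$, since $G$ is common (and, in the second scenario, $H=\bar H$ as well).

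For the first claim I truncate $U - \bar U$ at parabolic order $\gamma_1 := 3/2 - 2\kappa$. Because $\phi = \bar\phi$, every noise-involving coefficient of degree $<\gamma_1$ whose formula depends only on $\phi$ (in particular those of $\mathcal I(\Xi)$ and $\mathcal I(\mathcal I(\Xi)\Xi)$) vanishes; any coefficient that further depends on $\phi_1$ — such as $G'(\phi)\phi_1$ on $\mathcal I(X_1\Xi)$ of degree exactly $3/2 - \kappa > \gamma_1$ — is absorbed in the reconstruction remainder $O(|(s,y)-(t,x)|^{\gamma_1})$. The only polynomial symbols below $\gamma_1$ are $\mathbf 1$ (coefficient $0$) and $X_1$ (coefficient $D := \phi_1 - \bar\phi_1$), so reconstruction gives $u(s,y) - \bar u(s,y) = D(y-x) + O(|(s,y)-(t,x)|^{\gamma_1})$, which is the first bound after $|(s,y)-(t,x)|^2 \asymp |t-s| + |x-y|^2$.

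For the second claim I first test the Part~1 bound along $s = t$ with $y$ approaching $x$ from both sides; combined with $u \ge \bar u$, this forces $D = 0$ and hence $\phi_1 = \bar\phi_1$. I then repeat the truncation at parabolic order $\gamma_2 := 5/2 - 2\kappa$: with $\phi = \bar\phi$, $\phi_1 = \bar\phi_1$, and $H = \bar H$, every noise-involving coefficient whose formula depends only on $(\phi,\phi_1)$ cancels, while any dependence on $\phi_0$ or $\phi_{11}$ only enters noise-involving symbols of degree at least $2 + (1/2 - \kappa) = 5/2 - \kappa > \gamma_2$, hence is swallowed by the remainder. Only the polynomial symbols $X_0$ and $X_1^2$ survive below $\gamma_2$, yielding $u(s,y) - \bar u(s,y) = E(t-s) + F(y-x)^2 + O(|(s,y)-(t,x)|^{\gamma_2})$ with $E := \phi_0 - \bar\phi_0$ and $F := \phi_{11} - \bar\phi_{11}$. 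Restricting to $s = t$ and letting $y \to x$ forces $F \ge 0$; restricting to $y = x$ and letting $s \nearrow t$ forces $E \ge 0$; setting $D_1 = E$, $D_2 = F$ concludes. The principal technical point, and where care is actually required, is the assertion underlying the second paragraph: that each noise-involving coefficient in the local expansion of $U$ at $(t,x)$ is a specific polynomial in $\phi$ and finitely many polynomial jet coefficients $\phi_k$, with the dependence on each $\phi_k$ confined to symbols of suitably high degree. This follows from how the abstract map $U \mapsto G(U)$ is built as a Taylor lift and fed into the abstract integration operator in the fixed-point iteration, but it must be tracked carefully; once that bookkeeping is in place, the cancellations and sign extractions above become elementary.
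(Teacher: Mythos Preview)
Your proposal is correct and follows essentially the same route as the paper's proof: both exploit the explicit form \eqref{e:formU} of the modelled distribution $U$ together with \eqref{e:propFP}/\eqref{e:RHS} to observe that all noise-involving coefficients below the relevant threshold depend only on $u$ (for the first bound) and on $(u,u')$ (for the second), so that the coincidence $u(z)=\bar u(z)$ (and, in the second part, $u'(z)=\bar u'(z)$, forced by the one-sided comparison) kills them, leaving only the polynomial terms $X_1$, respectively $X_0$ and $X_1^2$. The paper phrases this slightly differently---it simply notes that the first symbol above degree $2$ sits at $5/2-5\kappa$---whereas you push the truncation to $5/2-2\kappa$ by also cancelling the symbols at $5/2-m\kappa$ for $m\ge 3$; both yield the stated exponent after adjusting $\kappa$.

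One minor slip: since $(\Pi_z X_0)(s,y)=s-t$, the $X_0$ contribution to $u-\bar u$ is $(\phi_0-\bar\phi_0)(s-t)$, not $E(t-s)$; so $D_1=\bar\phi_0-\phi_0=-E$, and the comparison at $y=x$, $s\nearrow t$ gives $E\le 0$, hence $D_1\ge 0$. Your conclusion is right, but via two compensating sign errors.
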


\begin{remark}
The assumption that $H$ and $G$ have bounded first derivatives ensures the existence of 
global solutions to \eref{e:SPDE}, which makes the statement easier to formulate. It is straightforward to localise the statements
to cover situations with possible blow-ups.
\end{remark}

\begin{remark}
The main result of this paper, in the particular case $H\equiv0$ and $G(u)=u$, is equivalent to 
the convergence to its Hopf--Cole solution of the regularised KPZ equation. Indeed, consider equation \eqref{e:SPDEapprox} with $H(u)=-c^{(1)}_\varrho \,u$ and $G(u)=u$, and with an initial condition which satisfies
$u_\varepsilon(0,x)>0$, for all $x\in S^1$, which implies that the same is true for $u_\varepsilon(t,x)$ for all $t>0, x\in S^1$. Let now 
$Z_\varepsilon(t,x)=\log[u_\varepsilon(t,x)]$. Dividing  \eqref{e:SPDEapprox} by $u_\epsilon$, we obtain that 
$$\partial_tZ_\varepsilon=\partial^2_xZ_\varepsilon+(\partial_x Z_\varepsilon)^2-c^{(1)}_\varrho-C_\varepsilon+\xi_\varepsilon.$$
A consequence of Theorem \ref{theo:main} is that $Z_\epsilon(t,x)\to Z(t,x):=\log u(t,x)$,
where $u$ solves the SPDE 
$$du=\partial^2_x u\, dt+u\, dW(t)$$
in the It\^o sense, i.e. $Z$ is the Hopf--Cole solution of the KPZ equation. 
This result was implicit in \cite{KPZ,Regularity} but not formulated there in detail.
It is also used in \cite{KPZJeremy} to identify the limit for a class of weakly asymmetric interface
fluctuation models as being the Hopf--Cole solution of the KPZ equation.
\end{remark}

\subsection*{Acknowledgements}

{\small
MH gratefully acknowledges financial support from the Philip Leverhulme Trust, the Royal Society, and the ERC.
}

\section{Values of constants}
\label{sec:constants}

In this section, we give explicit expressions for $c_\rho^{(1)}$ and
$c_\rho^{(2)}$ and we discuss the limiting behaviours of these constants under scalings of the 
type $\rho = \delta^{-1}\tilde \rho(\delta^{-1}t,x)$ and $\rho = \delta^{-1}\tilde \rho(t,\delta^{-1}x)$.
This allows us to also consider different scaling behaviours for the noise.

Denoting by $P$ the heat kernel (with the convention that $P(t,x) = 0$ for $t \le 0$), 
by $\delta$ the Dirac at the origin, and
 using the shorthand $\rho^{(2)} = \rho \star \rho$, the constants $c_\rho^{(i)}$ are given by the expression
\begin{equ}
c_\rho^{(i)} = c_\rho^{(i,1)} + c_\rho^{(i,2)}\;,
\end{equ}
with
\begin{equs}
c_\rho^{(1,1)} &= \int P(z_1)P(z_2)P(z_3) \,\rho^{(2)}(z_1 + z_2)  \rho^{(2)}(z_2 + z_3)\,\prod_{i=1}^3 dz_i \;,\\
c_\rho^{(1,2)} &= \int P(z_1)P(z_2) \bigl(P(z_3)\,\rho^{(2)}(z_3) - c_\rho \delta(z_3)\bigr) \rho^{(2)}(z_1 + z_2 + z_3)\,\prod_{i=1}^3 dz_i\;,\\
c_\rho^{(2,1)} &= \int P(z_1)P(z_2) P(z_3)\,\rho^{(2)}(z_1 + z_2) \rho^{(2)}(z_2 - z_3)\,\prod_{i=1}^3 dz_i \;, \label{e:defcrho}\\
c_\rho^{(2,2)} &= \int P(z_1)P(z_2) \bigl(P(z_3)\,\rho^{(2)}(z_3) - c_\rho \delta(z_3)\bigr) \rho^{(2)}(z_1 - z_2 + z_3)\,\prod_{i=1}^3 dz_i\;.
\end{equs}
Here, the integration variables
$z_i$ are space-time variables and are integrated over all of $\R^2$. More concisely,
if we draw a node \tikz[baseline=-3] \node [dot] {}; for each integration variable, a special node \tikz[baseline=-3] \node [root] {}; for the origin, 
an arrow for the heat kernel (evaluated at the difference between the two 
variables that it connects), and a dotted line for $\rho^{(2)}$ (also 
evaluated at the difference between the two 
variables that it connects), then \eref{e:defcrho} can be
rewritten much more concisely and suggestively as
\begin{equs}[e:defrhopicture]
c_\rho = 
\begin{tikzpicture}[baseline=10,scale=0.5]
\node at (0,2) [dot] (1) {};
\node at (0,0) [root] (0) {};

\draw[rho] (1) to[bend left=60] (0);
\draw[kernel] (1) to[bend right=60] (0);
\end{tikzpicture}\;,\quad
c_\rho^{(1,1)} &= 
\begin{tikzpicture}[baseline=10,scale=0.5]
\node at (1,2) [dot] (1) {};
\node at (-1,2) [dot] (2) {};
\node at (1,0) [dot] (3) {};
\node at (-1,0) [root] (4) {};

\draw[kernel] (1) to (2);
\draw[kernel] (2) to (3);
\draw[kernel] (3) to (4);

\draw[rho] (1) to (3);
\draw[rho] (2) to (4);
\end{tikzpicture}\;, \quad
c_\rho^{(1,2)} = 
\begin{tikzpicture}[baseline=10,scale=0.5]
\node at (-1,2) [dot] (1) {};
\node at (1,2) [dot] (2) {};
\node at (-1,0) [root] (3) {};
\node at (1,0) [dot] (4) {};

\draw[kernel] (1) to (2);
\draw[kernel] (4) to (3);
\draw[kernel,bend right=60] (2) to (4);
\draw[rho,bend left=60] (2) to (4);

\node at (1,1) {$\Ren$};

\draw[rho] (1) to (3);
\end{tikzpicture}\;,\\
c_\rho^{(2,1)} &= 
\begin{tikzpicture}[baseline=10,scale=0.5]
\node at (1,2) [dot] (1) {};
\node at (-1,2) [dot] (2) {};
\node at (1,0) [dot] (3) {};
\node at (-1,0) [root] (4) {};

\draw[kernel] (1) to (2);
\draw[kernel] (2) to (3);
\draw[kernel] (4) to (3);

\draw[rho] (1) to (3);
\draw[rho] (2) to (4);
\end{tikzpicture}\;,\quad
c_\rho^{(2,2)} = 
\begin{tikzpicture}[baseline=10,scale=0.5]
\node at (-1,2) [dot] (1) {};
\node at (1,2) [dot] (2) {};
\node at (-1,0) [root] (3) {};
\node at (1,0) [dot] (4) {};

\draw[kernel] (1) to (2);
\draw[kernel] (3) to (4);
\draw[kernel,bend right=60] (2) to (4);
\draw[rho,bend left=60] (2) to (4);

\node at (1,1) {$\Ren$};

\draw[rho] (1) to (3);
\end{tikzpicture}\;,
\end{equs}
where the symbol $\Ren$ inside a loop indicates that it was ``renormalised'' by subtracting a 
delta-function with weight identical to the integral of the kernel represented by the loop.
In this case, the kernel in question is $P \rho^{(2)}$ and its integral is precisely $c_\rho$.
This kind of notation will be used extensively in Section~\ref{sec:convergence} below, so we urge the reader
to familiarise themselves with the link between \eref{e:defrhopicture} and \eref{e:defcrho}.
Note also that in this particular case, the location on the graph of the distinguished 
variable \tikz[baseline=-3] \node [root] {}; representing the origin does not matter.

The fact that the integrals depicted by \eref{e:defrhopicture} 
converge (and the precise meaning of the ones involving ``renormalisation'') is not 
completely obvious, so we will discuss this now.

\subsection{Convergence of the renormalisation constants}
\label{sec:convConst}

The convergence of the integral defining $c_\rho$ is very easy to verify, so we focus
on the remaining four terms.
There are three issues with the definitions \eref{e:defrhopicture}: integrability of the
expressions at short scales, integrability at large scales, and the meaning of the ``renormalised''
integrals. Regarding integrability at small scales, it is very easy to convince oneself that 
this is not a problem, thanks to the relatively mild singularity of the heat kernel and the
boundedness of the function $\rho^{(2)}$. 

Regarding the integrability at large scales, we first consider the terms $c_\rho^{(i,1)}$.
For these terms, we see that thanks to the fact that $P(t,x) = 0$ for $t < 0$ and $\rho^{(2)}$
is compactly supported, the time variables take values in a compact domain. Pictorially, if
two nodes are connected by $\rho^{(2)}$ like this \tikz[baseline=-3] \draw[rho] (0,0) node [dot] {} -- (1,0) node [dot] {};, then the corresponding time variables can be separated by at most a fixed finite distance. Furthermore, for any configuration of the type \begin{tikzpicture}[baseline=-3] \node at (0,0) [dot] (1) {};\node at (1,0) [dot] (2) {};\node at (2,0) [dot] (3) {}; \draw[kernel] (1) -- (2);\draw[kernel] (2) -- (3);\end{tikzpicture} (here, the direction of the arrows does matter), the time 
coordinate of the middle variables has to lie between the time coordinates of the other two.
Since, for any fixed time, the heat kernel decays exponentially fast in the space variable, this 
shows that the expressions for the constants $c_\rho^{(i,1)}$ are well-defined.

Regarding $c_\rho^{(1,2)}$, we can simply rewrite it as
\begin{equ}
c_\rho^{(1,2)} = \begin{tikzpicture}[baseline=10,scale=0.5]
\node at (-1,2) [dot] (1) {};
\node at (1,2) [dot] (2) {};
\node at (-1,0) [root] (3) {};
\node at (1,0) [dot] (4) {};

\draw[kernel] (1) to (2);
\draw[kernel] (4) to (3);
\draw[kernel,bend right=60] (2) to (4);
\draw[rho,bend left=60] (2) to (4);

\draw[rho] (1) to (3);
\end{tikzpicture}\;-\;
c_\rho \;
\begin{tikzpicture}[baseline=10,scale=0.5]
\node at (-1,2) [dot] (1) {};
\node at (1,1) [dot] (2) {};
\node at (-1,0) [root] (3) {};

\draw[kernel] (1) to (2);
\draw[kernel] (2) to (3);

\draw[rho] (1) to (3);
\end{tikzpicture}
\end{equ}
Using the same considerations as those used to bound $c_\rho^{(i,1)}$, we conclude that
each of these two terms converges separately.

The constant $c_\rho^{(2,2)}$ is a little bit more delicate to bound. Indeed, if we decompose
it as we did for $c_\rho^{(1,2)}$, then we see that each of these terms taken individually 
diverges. This can be easily seen for the second term, since
this is given by $\int \rho^{(2)} (z_1 - z_2) P(z_1)P(z_2)\,dz_1\,dz_2$ and $|P(z)|^2$ decays like 
$|z|^{-2}$ which is far from being integrable at large scales. (Recall that
we endow $\R^2$ with the parabolic distance which, for $z = (t,x)$, is given by
$|z| = |x| + \sqrt{|t|}$. When endowed with the parabolic scaling, the scaling dimension
of $\R^2$ is equal to $3$, so we are one whole power short of being integrable, and not just 
borderline.) Denote now by $Q$ the distribution
formally given by $Q(z) = P(z)\rho^{(2)}(z) - c_\rho\,\delta_0(z)$, then $c_\rho^{(2,2)}$
should be interpreted as
\begin{equ}
c_\rho^{(2,2)} = \int \rho^{(2)} (z_1 - z_2) (P \star Q)(z_1)\,P(z_2)\,dz_1\,dz_2\;.
\end{equ}
A finite bound on this expression then follows immediately from the following very 
simple result.

\begin{lemma}
There exists a constant $C$ such that the  function $P\star Q$ is bounded
by $C \bigl(|z|^{-1} \wedge |z|^{-3}\bigr)$, uniformly in $z$.
\end{lemma}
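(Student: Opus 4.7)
The plan is to split the argument by the size of $|z|$, exploiting the compact support of $Q$ together with two cancellations: $\int Q = 0$ (the defining property of $c_\rho$) and $\int x'\, Q(z')\,dz' = 0$, which holds since both $P$ and $\rho^{(2)}$ are even in the spatial variable. Let $R$ be such that $\operatorname{supp}(Q) \subseteq \{|z|\le R\}$, where $|\cdot|$ denotes the parabolic distance.

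In the region $|z|\le 2R$ it suffices to prove $(P\star Q)(z) \lesssim |z|^{-1}$. The term $c_\rho P(z)$ is handled by the standard parabolic Gaussian bound on $P$. For the other contribution, positivity of $P$ and boundedness of $\rho^{(2)}$ yield
\begin{equation*}
\Bigl|\int P(z-z')\,P(z')\,\rho^{(2)}(z')\,dz'\Bigr|\le \|\rho^{(2)}\|_\infty\,(P\star P)(z)\;,
\end{equation*}
and Chapman--Kolmogorov gives the closed form $(P\star P)(t,x) = \sqrt{t/(4\pi)}\,e^{-x^2/(4t)}$, which is uniformly bounded on $\{|z|\le 2R\}$.

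In the region $|z|\ge 2R$, the compactness of $\operatorname{supp}(Q)$ ensures $|z-z'|\ge |z|/2$ throughout, and I would then parabolically Taylor expand
\begin{equation*}
P(z-z') = P(z) - x'\partial_x P(z) + \tfrac12 (x')^2\partial_x^2 P(z) - t'\partial_t P(z) + \mathsf{E}(z,z')\;,
\end{equation*}
with $|\mathsf{E}(z,z')|\lesssim (|x'|^3 + |t'|^{3/2})\,|z|^{-4}$ from the parabolic scaling of third-order derivatives of $P$. Integrated against $Q$, the zeroth-order term cancels by $\int Q = 0$ (this absorbs the $-c_\rho P(z)$ piece), and the first-order spatial term cancels by the symmetry above. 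The surviving $\partial_x^2 P(z)$ and $\partial_t P(z)$ scale as $|z|^{-3}$ in the parabolic sense, multiplied by the \emph{finite} moments $\int (x')^2 P(z')\rho^{(2)}(z')\,dz'$ and $\int t' P(z')\rho^{(2)}(z')\,dz'$ (the subtracted delta contributes nothing since both weights vanish at the origin, and $P\rho^{(2)}$ has only a locally integrable singularity on a compact set). The remainder integrates to an $O(|z|^{-4})$ term, which is dominated.

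The main subtlety is the bookkeeping in the parabolic Taylor expansion: one must assign weight $1$ to $x$ and weight $2$ to $t$, so that each monomial $(x')^a(t')^b$ of total parabolic weight $a+2b = 3$ is paired with a matching third-order parabolic derivative of $P$, which scales as $|z|^{-4}$. Once this weighting is tracked consistently the remainder estimate is routine. Note in particular that truncating at parabolic order $2$ is exactly right: the leading $|z|^{-3}$ behaviour comes precisely from the second-order spatial and first-order temporal terms, and no further cancellation is needed to reach the claimed bound.
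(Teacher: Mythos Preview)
Your proof is correct and relies on exactly the same two cancellations as the paper's proof, namely $\int Q = 0$ and $\int x'\,Q(z')\,dz' = 0$ (equivalently, $Q$ annihilates every function of the form $(t,x)\mapsto a+bx$). The difference is purely one of packaging: the paper observes these cancellations and then invokes \cite[Prop.~A.1]{Regularity} (applied with $A=\{(0,0),(0,1)\}$) together with the parabolic scaling of $P$ as a black box, whereas you carry out the underlying parabolic Taylor expansion by hand. Your argument is more self-contained and makes explicit why precisely two moment conditions suffice to reach $|z|^{-3}$; the paper's version is shorter but requires the reader to consult the external reference. Both are valid routes to the same estimate.
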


\begin{proof}
We use the the fact that $Q$ has compact support 
 (since $\varrho$, hence also $\varrho^{(2)}$, has this property), integrates to $0$
and is even in the spatial variable. As a consequence, it annihilates every function
of the form $(t,x) \mapsto a+bx$ with $a,b \in \R$.
The claim then follows immediately from \cite[Prop.~A.1]{Regularity} (applying 
it with $A = \{(0,0), (0,1)\}$), combined with the
scaling properties of $P$.
\end{proof}

\subsection{Non-parabolic scalings: spatial regularisation}

The case of purely spatial regularisation formally corresponds to
$\rho(t,x) = \delta_0(t) \bar \rho(x)$ (with $\delta_0$ the Dirac distribution
centred at the origin). In general, one could consider a regularisation
of the noise of the type
$\xi_\eps = \eps^{-(1+\beta)/2} \xi_1(\eps^{-\beta} t, \eps^{-1} x)$ for some
$\beta > 2$, which corresponds to the case where we consider approximations that 
are ``more regular'' in space than in time. We can recast this in the previous 
framework, but now $\rho$ (or rather $\rho^{(2)}$ since this is all that ever matters
for the calculation of our constants) is replaced by
\begin{equ}[e:spatialreg]
\rho_\delta^{(2)}(t,x) = \delta^{-1} \rho^{(2)}(\delta^{-1} t,x)\;,\qquad \delta = \eps^{\beta-2}\;.
\end{equ}
With such a choice, we then obtain
\begin{equ}
c_\rho = c_\rho(\delta) = \int P(z)\, \rho_\delta^{(2)}(z) \,dz \to c_\rho(0) = \int_0^\infty \rho^{(2)}(t,0)\,dt\;,
\end{equ}
which is consistent with the results obtained in \cite{PP12,HPP13}. 

We claim that the constants $c_\rho^{(i,j)}$ converge to $0$ as $\delta \to 0$ if the
regularisation $\rho^{(2)}$ is of the form \eref{e:spatialreg}.
Regarding the constants $c_\rho^{(i,2)}$, it is straightforward to see that, weakly, one
has $P \rho^{(2)}_\delta \to c_\rho(0) \,\delta_0$, so that these two constants do indeed converge to $0$
as $\delta \to 0$. Regarding the constants $c_\rho^{(i,1)}$, we obtain an upper bound
on their absolute values by replacing $\rho^{(2)}_\delta(t,x)$ by the function 
$\delta^{-1} \one_{|t| < \delta}(t,x)$.
For any fixed values of the three time variables, the integration over the spatial variables is
then bounded by $\delta^{-2}$ as a simple consequence of the fact that $\int P(t,x)\,dx = 1$ for every $t$.
(The factor $\delta^{-2}$ comes from the two factors $\rho^{(2)}_\delta$.)
Since the time variables are bounded by some multiple of $\delta$ and there are three time integrals,
we conclude that $|c_\rho^{(i,1})| \le C \delta$ for some $C$, so that they also converge to $0$.
This is also consistent with \cite[Thm~1.11]{BMS}, where the authors obtain
a convergence result similar in spirit to our Theorem~\ref{theo:main}, but with $\xi_\eps$ given by
a regularisation of space-time white noise at scale $\eps$ in space and scale $\exp(-c/\eps)$ 
(which is of course \textit{much} smaller than $\eps^2$) in time.

\subsection{Non-parabolic scalings: temporal regularisation}

The case of temporal regularisation is much less obvious, and this was already the ``difficult case'' in
\cite{PP12,HPP13}. 
This time, we consider a regularisation
of the noise of the type
$\xi_\eps = \eps^{-1+{\beta\over 2}} \xi_1(\eps^{-2} t, \eps^{-\beta} x)$ for some
$\beta > 1$.
Again, we can recast this into our 
framework, with $\rho^{(2)}$ replaced by
\begin{equ}
\rho_\delta^{(2)}(t,x) = \delta^{-1} \rho^{(2)}(t,\delta^{-1}x)\;,\qquad \delta = \eps^{\beta-1}\;.
\end{equ}
With this choice it is then straightforward to verify that one has
\begin{equ}
c_\rho = \int P(z)\, \rho_\delta^{(2)}(z) \,dz \to \int_0^\infty {\bar \rho(t) \over \sqrt{ 4\pi t}}\,dt\;,
\end{equ}
where we used the shorthand notation $\bar \rho(t) = \int_\R \rho^{(2)}(t,x)\,dx$.
This is due to the fact that $\rho_\delta^{(2)}$ degenerates to a Dirac delta-function in the
spatial variable and $P(t,0) = 1/\sqrt{4\pi t}$. 
Inspection of \eref{e:defcrho}, combined with the fact that
the heat kernel $P$ satisfies the identities
\begin{equs}
\int_\R P_s(x)P_t(x)\,dx &= {1\over \sqrt{4\pi(s+t)}}\;,\\
\int_\R P_s(x)P_t(x)P_u(x)\,dx &= {1\over 4\pi\sqrt{st+tu+su}}\;,
\end{equs}
yields furthermore the expressions
\begin{equs}
c_\rho^{(1)} &= {1\over 4\pi}\int_{\R_+^3} {\bar \rho(t+s)\bar \rho(t+u)\over \sqrt{st+tu+su}}\,ds\,dt\,du\\
&\qquad + {1\over 4\pi}\int_{\R_+^3} {\bar \rho(t+u+s)\over \sqrt{t+s}} \Bigl({\bar \rho(u) \over \sqrt u} - c_\rho\,\delta(u)\Bigr)\,du\,ds\,dt\;,\\
c_\rho^{(2)} &= {1\over 4\pi}\int_{\R_+^3} {\bar \rho(t-s)\bar \rho(t+u)\over \sqrt{st+tu+su}}\,ds\,dt\,du\\
&\qquad + {1\over 4\pi}\int_{\R_+^3} {\bar \rho(t+u-s)\over \sqrt{t+s}} \Bigl({\bar \rho(u) \over \sqrt u} - c_\rho\,\delta(u)\Bigr)\,du\,ds\,dt\;,
\end{equs}
for the constants governing the remaining two correction terms.

\section{Regularity structures}

In order to prove the type of convergence result mentioned in the 
introduction, we make use of the theory of regularity structures 
developed in \cite{Regularity}. A complete self-contained exposition of
the theory is of course beyond the scope of this article, so we will
content ourselves with a short summary of the theory's main concepts 
and results, when specialised to the specific example of the class \eref{e:SPDE}.
For a more concise exposition of the general theory, see also the lecture 
notes \cite{Notes}, as well as \cite[Chapter~15]{Book}.

The main ingredient of the theory is that of a \textit{regularity structure}.
This consists first of a graded vector space $\CT = \bigoplus_{\alpha \in A} \CT_\alpha$
where $A$ denotes a set of real-valued indices (called homogeneities) that is 
locally finite and bounded from below. In our specific situation, each 
of the spaces $\CT_\alpha$ is finite-dimensional
and comes with a distinguished canonical basis. The space $\CT$ also comes
endowed with a group $\CG$ of continuous linear transformations of $\CT$ with the property that,
for every $\Gamma \in \CG$, every $\alpha \in A$, and every $\tau \in \CT_\alpha$ one has
\begin{equ}[e:basicRel]
\Gamma \tau - \tau \in \bigoplus_{\beta < \alpha} \CT_\beta\;.
\end{equ}
The canonical example to keep in mind is the space $\bar \CT = \bigoplus_{n \in \N} \bar \CT_n$ 
of abstract polynomials in
finitely many indeterminates, with $A = \N$ and $\bar \CT_n$ denoting the space of monomials
that are homogeneous of degree $n$. In this case, a natural group of transformations $\CG$ acting
on $\bar \CT$ is given by the group of translations, which does indeed satisfy \eref{e:basicRel}.

\subsection{Specific regularity structure}
\label{sec:regStruc}

The regularity structure that is relevant for the analysis of \eref{e:SPDE} is built in the
following way. First, we start with the regularity structure $\bar \CT$ given by all polynomials in
two indeterminates, let us call them $X_1$ and $X_0$, which denote the space and time
directions respectively. We do however endow these with the usual parabolic space-time 
scaling so that each factor of the ``time'' variable $X_0$ increases the homogeneity by $2$.
In particular, one has $\one \in \bar \CT_0$,  $X_1 \in \bar \CT_1$, $X_0 \in \bar \CT_2$, $X_1^2 \in \bar \CT_2$, etc.

We then introduce two additional symbols, $\Xi$ and $\CI$, which will be interpreted 
as an abstract representation of the driving noise $\xi$ and of the operation of convolution
with the heat kernel respectively. Fixing some (sufficiently small in the sequel) exponent $\kappa > 0$,
we then define $\CT_{-{3\over 2} - \kappa}$ as the copy of $\R$ with unit vector denoted by $\Xi$
and we postulate that if $\tau$ is some formal expression with homogeneity $|\tau| = \alpha$, then
$\CI(\tau)$ is a new formal expression with homogeneity $|\CI(\tau)| = \alpha + 2$.
Furthermore, we also postulate that $\CI(X^k) = 0$ for every multiindex $k$.
(Here, for $k = (k_0,k_1)$, we have used the shorthand $X^k = X_0^{k_0}X_1^{k_1}$.)
See Section~\ref{sec:models} below, and in particular \eref{e:killPoly}, for an interpretation
of this fact.
Furthermore, if $\tau, \bar \tau$ are formal expressions with respective homogeneities
$\alpha, \bar \alpha$, then $\tau \bar \tau = \bar \tau \tau$ is postulated to be a new formal expression with
homogeneity $\alpha + \bar \alpha$. 

A few examples of formal expression with their respective homogeneities
that can in principle be built in this way are given by
\begin{equ}[e:examples]
|X_0\Xi| = {1\over 2}-\kappa\;,\quad 
|\Xi\CI(\Xi)| = -1-2\kappa\;,\quad 
|\Xi^2\CI(\Xi)^2| = -2-4\kappa\;. 
\end{equ}
In order to define our regularity structure $\CT$, we do not keep all of these formal expressions,
but only those that are actually useful for the abstract reformulation of \eref{e:SPDE}.
More precisely, we consider a collection $\CU$ of formal expressions which is the
smallest collection containing $\one$, $X_0$, and $X_1$, and such that 
\begin{equs}[2]
\tau \in \CU \quad&\Rightarrow\quad& \CI(\tau) &\in \CU\;,\\
\tau \in \CU \quad&\Rightarrow\quad& \CI(\Xi\tau) &\in \CU\;,\label{e:propU}\\
\tau, \bar \tau \in \CU \quad&\Rightarrow\quad& \tau \bar \tau &\in \CU\;.
\end{equs}
We then set 
\begin{equ}
\CW = \CU \cup \{\Xi \tau\,:\, \tau \in \CU\}\;,
\end{equ}
and we define $\CT$ as the set of all linear combinations of elements in $\CW$. 
Naturally, $\CT_\alpha$ consists of those linear combinations that only involve
elements in $\CW$ that are of homogeneity $\alpha$. Furthermore, $\CW$ is the
previously announced set of canonical basis elements of $\CT$.
In particular, $\CT$ contains the first
two expressions of \eref{e:examples}, but not the last one. It follows furthermore
from \cite[Lemma~8.10]{Regularity} that, for every $\alpha \in \R$, $\CW$ contains only
finitely many elements of homogeneity less than $\alpha$, so that each $\CT_\alpha$
is finite-dimensional and $A \cap (-\infty,\alpha]$ is finite.

We also decompose $\CT$ into a direct sum as
\begin{equ}[e:Tdirectsum]
\CT = \CT_\Xi \oplus \CT_\CU\;,
\end{equ}
where $\CT_\CU$ is the linear span of $\CU$ and $\CT_\Xi$ is the linear span of $\Xi \CU$, which 
are all those symbols in $\CW$ containing a factor $\Xi$.

\subsection{Structure group}

Let us now describe the structure group $\CG$ associated to the space $\CT$. For this, we 
first introduce $\CT_+$, the free commutative algebra generated by the formal expressions
\begin{equ}[e:genT+]
\CW_+ \eqdef \bigl\{X_0, X_1\bigr\}\cup \bigl\{\CJ_k(\tau)\,:\, \tau \in \CW \setminus \bar \CT \;, \; |k| < |\tau| + 2\}\;,
\end{equ}
where $k$ is an arbitrary $2$-dimensional multiindex and $|k|$ denotes its
``parabolic length'', i.e.
\begin{equ}
|k| = 2k_0 + k_1\;.
\end{equ}
In other words, $\CT_+$ consists of all linear combinations of products of formal expressions 
in $\CW_+$. We will view $\CJ_k$ as a map from $\CT$ into $\CT_+$
by postulating that it acts linearly on $\CT$ and that $\CJ_k(\tau) = 0$ for those formal expressions
$\tau$ for which $|\tau| + 2 \le |k|$ or $\tau \in \bar \CT$. 
Note that for the moment, elements of $\CT_+$ are formal objects. They will be used later on to index
matrix elements for some useful linear transformations on $\CT$. We will give explicit formulae for
the relations between these formal expressions and the numbers they represent in Section~\ref{sec:models}.

With this definition at hand, we construct a linear map $\Delta \colon \CT \to \CT\otimes \CT_+$ in a recursive way. 
In order to streamline notations, we shall write 
$\tau^{(1)}\otimes\tau^{(2)}$ as a shorthand for $\Delta\tau$.
(This is an abuse of notation, following Sweedler, since in general
$\Delta\tau$ is a linear combination of such terms.) We then define $\Delta$ via the identities
\begin{equ}[e:Delta1]
\Delta\one=\one\otimes\one\;,\qquad
\Delta\Xi= \Xi\otimes\one\;,\qquad
\Delta X_i= X_i\otimes\one+\one\otimes X_i\;,
\end{equ}
and then recursively by the following relations:
\begin{equs}[e:Delta2]
\Delta \tau\overline{\tau}&= \tau^{(1)}\overline{\tau}^{(1)}\otimes \tau^{(2)}\overline{\tau}^{(2)}\;,\\
\Delta\I(\tau)&=\I(\tau^{(1)})\otimes \tau^{(2)}+\sum_{\ell,k}\frac{X^\ell}{\ell!}\otimes\frac{X^k}{k!}
\J_{\ell+k}(\tau)\;.
\end{equs}
For any linear functional $f \colon \CT_+ \to \R$, we can now define in a natural way
a map $\Gamma_{\!f} \colon \CT \to \CT$ by
\begin{equ}
\Gamma_{\!f} \tau = (I \otimes f)\Delta \tau\;.
\end{equ}
Let now $\CG_+$ denote the set of all such linear functionals $f$ which are multiplicative in the sense that 
$f(\tau \bar \tau) = f(\tau)f(\bar \tau)$ for any two elements $\tau, \bar \tau \in \CT_+$. With this definition
at hand, we set
\begin{equ}
\CG = \{\Gamma_{\! f}\,:\, f \in \CG_+\}\;.
\end{equ}
It is not difficult to see that these operators satisfy the property \eref{e:basicRel}, but it is
a highly non-trivial fact that the set $\CG$ of these linear operators does indeed form a group
under composition, see \cite[Sec.~8.1]{Regularity}.

\begin{remark}\label{rem:cutoff}
As a matter of fact, we will never need to consider the full space $\CT$ as defined above,
but it will be sufficient to consider the subspace generated by all elements of homogeneity less
than some  large enough number $\zeta$. In practice, it actually turns out to be 
sufficient to choose any $\zeta > {3\over 2} + \kappa$, except when proving Corollary~\ref{cor:coincide} for which we
require $\zeta = {5/2}$.
\end{remark}

\begin{remark}\label{rem:prod}
As a consequence of \eqref{e:Delta1}, \eqref{e:Delta2}, and the fact that linear functionals in $\CG_+$ are multiplicative,
one has $\Gamma \Xi = \Xi$ and $\Gamma(\tau \bar \tau) = (\Gamma \tau)(\Gamma \bar \tau)$ for every $\Gamma \in \CG$
and every $\tau, \bar \tau \in \CT$ such that the product $\tau \bar \tau$ also belongs to $\CT$.
\end{remark}

\subsection{Models}\label{sec:models}

Now that we have fixed our algebraic regularity structure $(\CT,\CG)$, we introduce a family of analytical 
objects associated to it that will play the role of Taylor polynomials in our theory in order to
allow us to describe solutions to \eref{e:SPDE} locally, up to arbitrarily high order, despite the fact
that they are not smooth in the conventional sense.

From now on, we also fix a value $\zeta \ge 2$ as in Remark~\ref{rem:cutoff} and we
set $\CT = \bigoplus_{\alpha \in A\,:\, \alpha \le \zeta} \CT_\alpha$. This also has the advantage that
$\CT$ itself is finite-dimensional so we do not need to worry about topologies.
In order to describe our ``polynomial-like'' objects, 
we first fix a kernel $K \colon \R^2 \to \R$ with the following properties:
\begin{enumerate}
\item The kernel $K$ is compactly supported in $\{x^2 + |t| \le 1\}$.
\item One has $K(t,x) = 0$ for $t \le 0$ and $K(t,-x) = K(t,x)$.
\item For $(t,x)$ with $x^2 + t < 1/2$ and $t > 0$, one has
\begin{equ}
K(t,x) = {1\over \sqrt{4\pi t}} e^{-{x^2 \over 4t}}\;,
\end{equ}
and $K$ is smooth on $\{x^2 + |t| \ge 1/4\}$.
\item For every polynomial $Q \colon \R^2 \to \R$ of parabolic degree less than $\zeta$, one has
\begin{equ}[e:killPoly]
\int_{\R^2} K(t,x) Q(t,x)\,dx\,dt = 0\;.
\end{equ}
\end{enumerate}
in other words, $K$ has essentially all the properties of the heat kernel $P$, except that it is furthermore
compactly supported and satisfies \eref{e:killPoly}. The constants $1/2$ and $1/4$ appearing in
the third point are of course completely arbitrary as long as they are strictly between $0$ and $1$.
The existence of a kernel $K$ satisfying these properties is very easy to show.

We now denote by $\CS'$ the space of Schwartz distributions on $\R^2$ and by $\CL(\CT,\CS')$ the space
of (necessarily continuous) linear maps from $\CT$ to $\CS'$. Furthermore, given
a continuous test function $\phi\colon \R^2 \to \R$ and a point $z = (t,x) \in \R^2$, we set
\begin{equ}
\phi_z^\lambda(\bar z) = \lambda^{-3} \phi\bigl((\lambda^{-2}(\bar t - t), \lambda^{-1}(\bar x - x)\bigr)\;,
\end{equ}
where we also used the shorthand $\bar z = (\bar t, \bar x)$. Finally, we write $\CB$ for the set
of functions $\phi \colon \R^2 \to \R$ that are smooth, compactly supported in the ball of radius one,
and with their values and both first and second derivatives bounded by $1$.

Given a kernel $K$ as above, we then introduce a set $\MM$ of \textit{admissible models} which
are analytical objects built upon our regularity structure $(\CT,\CG)$ that will play a role for our solutions that is
similar to that of the usual Taylor polynomials for smooth functions.
An admissible model consists of a pair $(\Pi,F)$ of functions
\begin{equs}[2]
\Pi \colon \R^2 &\to \CL(\CT,\CS') \quad & \quad F \colon \R^2  &\to \CG \\
z &\mapsto \Pi_z & z &\mapsto F_z 
\end{equs}
with the following properties. First, writing $\gamma_{z\bar z} \in \CG_+$ for the element
such that $F_z^{-1} \circ F_{\bar z}  = \GGamma{\gamma_{z\bar z}}$, we impose that
they satisfy the analytical bounds
\begin{equ}[e:bounds]
\bigl|\bigl(\Pi_z \tau\bigr)(\phi_z^\lambda)\bigr| \lesssim \lambda^{|\tau|}\;,\qquad
\bigl|\gamma_{z\bar z}(\bar \tau)\bigr| \lesssim |z-\bar z|^{|\bar \tau|}\;,
\end{equ}
uniformly over $\phi \in \CB$, $\lambda \in (0,1]$, $\tau \in \CW$, and $\bar \tau \in \CW_+$. 
Here, with the same shorthand as before,
we set
\begin{equ}
|z-\bar z| = |x-\bar x| + \sqrt{|t-\bar t|}\;.
\end{equ}
Also, the proportionality constants implicit in the notation $\lesssim$ are assumed to be bounded
uniformly for $z$ and $\bar z$ taking values in any compact set. We furthermore assume
that one has the algebraic identity
\begin{equ}[e:algebraic]
\Pi_{z} F_z^{-1} = \Pi_{\bar z} F_{\bar z}^{-1}\;,
\end{equ}
valid for every $z, \bar z$ in $\R^2$. 

\begin{remark}
We will write $f_z$ for the element in $\CG_+$ such that $F_z = \Gamma_{f_z}$ and we will provide
explicit expressions for $f_z$. We will also
use interchangeably the notations $(\Pi,F)$ and $(\Pi, f)$ for the model. Note also that 
there is an explicit formula for the bilinear map giving $\gamma_{z\bar z}$ in terms of
$f_z$ and $f_{\bar z}$, but its expression is somewhat complicated and of no particular use
for this article. See \cite[Section~8.1]{Regularity} for more details.
\end{remark}

Finally, and this is why our models are called
\textit{admissible}, we assume that, for every multiindex $k$,
\minilab{e:admissible}
\begin{equ}\label{e:admissible1}
\bigl(\Pi_z X^k\bigr)(\bar z) = (\bar z- z)^k\;,\qquad f_z(X^k) = (-z)^k\;,
\end{equ}
and that, for every $\tau \in \CW$ with $\CI(\tau) \in \CT$
(since we truncated $\CT$, this is not the case for \textit{all} $\tau$), one has the identities
\minilab{e:admissible}
\begin{equs} 
f_z(\CJ_k \tau) &=  -\int_{\R^2} D^k K(z - \bar z)\bigl(\Pi_{z} \tau\bigr)(d\bar z) \;, \qquad |k| < |\tau|+2\;,\label{e:admissible2}\\
\bigl(\Pi_z \CI \tau\bigr)(\bar z) &=  \int_{\R^2} K(\bar z - \bbar z)\bigl(\Pi_{z} \tau\bigr)(d\bbar z) + \sum_{k} {(\bar z - z)^k \over k!} f_z(\CJ_k \tau) \;.\label{e:admissible3}
\end{equs}
Recall that we have set $\CJ_k \tau = 0$ if $|k| \ge |\tau|+2$, so that the sum on the second line is finite.
It is not clear in principle that these integrals converge, but it turns out that the analytical conditions
\eref{e:bounds} guarantee that this is always the case, see \cite[Sec.~5]{Regularity}.

\begin{remark}\label{rem:redundant1}
Note that since $f_z \in \CG_+$, so that it is multiplicative, \eref{e:admissible1} and \eref{e:admissible2}
do specify $f_z$ (and therefore $F_z$) completely once we know $\Pi_z$. There is therefore
quite a lot of rigidity in these definitions, which makes the mere existence of admissible models a 
highly non-trivial fact.
\end{remark}

\begin{remark}\label{rem:redundant2}
Building further on Remark~\ref{rem:redundant1}, it actually turns out that if $\Pi \colon \R^2 \to \CL(\CT,\CS')$ 
satisfies the {\em first} analytical bound in \eref{e:bounds} and
is such that, for $F$ defined from $\Pi$ via \eref{e:admissible}, one has the 
identities \eref{e:admissible} and \eref{e:algebraic}, then the second analytical bound in \eref{e:bounds}
is {\em automatically} satisfied. This is a consequence of \cite[Thm.~5.14]{Regularity}.
\end{remark}

Given any smooth space-time function $\xi_\eps$, there is a canonical way of building an admissible model
$\Psi(\xi_\eps) = (\Pi^\eps, f^\eps)$ as follows. First, we set $\Pi^\eps_z \Xi = \xi_\eps$, independently
of $z$, and we define it on $X^k$ as in \eref{e:admissible1}. 
Then, we define $\Pi^\eps_z$ recursively by \eref{e:admissible3}, as well as the identity
\begin{equ}[e:canonical]
\bigl(\Pi_z^\eps \tau \bar \tau\bigr)(\bar z) = \bigl(\Pi_z^\eps \tau\bigr)(\bar z)
\bigl(\Pi_z^\eps \bar \tau\bigr)(\bar z)\;.
\end{equ}
Note that this is only guaranteed to makes sense if $\xi_\eps$ is a function! It was shown in
\cite[Prop.~8.27]{Regularity} that if we furthermore define $f^\eps$ via \eref{e:admissible}, then
this does indeed define an admissible model for every continuous function $\xi_\eps$.
It is however very important to keep in mind that not every admissible model is obtained in this way, 
or even as a limit of such models! This will be very important in Section~\ref{sec:renorm} 
below when we discuss the renormalisation
procedure that relates \eref{e:SPDE} to \eref{e:SPDEapprox}.

In our case, we would like to define a limiting random model $(\hat \Pi,\hat f)$, naturally called
the ``It\^o model'', based on $\xi$ being space-time
white noise. One could imagine doing this in a very natural way as follows. As before, 
one sets $\hat\Pi_z \Xi = \xi$ and $\hat\Pi_z X^k$ as in \eref{e:admissible1}. In other words, we have
the identity
\begin{equ}
\bigl(\hat\Pi_z \Xi\bigr)(\psi) = \int_{-\infty}^\infty \scal{\psi(t,\cdot), dW(t)}\;,
\end{equ}
where $W$ is our $L^2$-cylindrical Wiener process.

We will only consider admissible
models, so it is again the case that once we know $\hat\Pi_z \tau$, 
$\hat\Pi_z \CI(\tau)$ is determined by
\eref{e:admissible}, so it remains to define $\hat\Pi_z \tau \bar \tau$. To do this, 
recall the decomposition
$\CT = \CT_\Xi \oplus \CT_\CU$ as in \eqref{e:Tdirectsum}.
It is straightforward to verify that the 
structure group $\CG_+$ leaves both of these subspaces invariant and that $|\tau| > 0$ for every
$\tau \in \CU \setminus \{\one\}$. It then 
follows from \cite[Prop.~3.28]{Regularity} 
that if $(\Pi, f)$ is a model, then $\Pi_z \tau$ is
a continuous function for every $\tau \in \CT_\CU$. As a consequence, if $\tau, \bar \tau \in \CU$,
we can again define $\hat\Pi_z \tau \bar \tau$ by 
\begin{equ}
\bigl(\hat\Pi_z \tau \bar \tau\bigr)(\bar z) = \bigl(\hat\Pi_z \tau\bigr)(\bar z)\,\bigl(\hat\Pi_z \bar \tau\bigr)(\bar z)\;.
\end{equ}
It remains to define $\hat\Pi_z$ on elements of the form $\Xi \tau$ with $\tau \in \CT_\CU$. In this case,
it would seem natural to postulate that $\hat\Pi_z \Xi \tau$ is a random distribution which acts on test 
functions $\psi$ by
\begin{equ}[e:wantedProp]
\bigl(\hat\Pi_z \Xi \tau\bigr)(\psi) = \int_{-\infty}^\infty \scal{\psi(s,\cdot)\, \bigl(\hat\Pi_{(t,x)} \tau\bigr)(s,\cdot),dW(s)}\;.
\end{equ}
Unfortunately, things are not quite that easy. Indeed, for this to make sense as an It\^o integral, we need the
integrand to be adapted. This is unfortunately the case only if the support of $\psi$ is 
included in $(t,\infty) \times S^1$.
Indeed, it can easily be verified recursively that, as a consequence of the non-anticipativity of the kernel $K$
and the definition \eref{e:admissible} for an admissible model, the random variables 
$\bigl(\hat\Pi_{(t,x)} \tau\bigr)(s,y)$
are $\CF_{s\vee t}$-measurable, where $\CF$ is the filtration generated by the increments of $W$,
but they are in general \textit{not} $\CF_s$-measurable for $s < t$.
As a consequence, \eref{e:wantedProp} does not make sense as an It\^o integral in general.
Since we know that as far as the limiting equation \eref{e:SPDE} is concerned It\^o integration coincides
with Skorokhod integration, one may think that it suffices to interpret \eref{e:wantedProp} in the 
Skorokhod sense, which is always meaningful if the integrand is sufficiently ``nice''.
Unfortunately, this is not the case either. As a matter of fact, replacing It\^o integration by
Skorokhod integration would not even allow us to satisfy the consistency equations for the ``model''
defined in this way.

Our main result will be a consequence of the convergence of a suitable sequence of renormalised models toward an 
``It\^o model'' $(\hat\Pi,\hat f)$ which does indeed satisfy the property \eref{e:wantedProp} for test functions
that are supported ``in the future''. Here, renormalisation is crucial: 
if $\xi_\eps$ denotes an $\eps$-mollification of
our space-time white noise, then one does \textit{not} expect the sequence of models
$\Psi(\xi_\eps)$ to converge to $(\hat \Pi,\hat f)$. Indeed, even in the analogous case of finite-dimensional
SDEs, the sequence of solutions to random ODEs obtained from natural regularisations of the noise
converges to the Stratonovich solution and not the It\^o solution.

In the case of SPDEs of the type \eref{e:SPDE}, we expect to have to subtract an 
asymptotically infinite correction term
in order to obtain a finite limit. This subtraction will be made at the level of the \textit{model}
rather than at the level of the equation and the ability to do this is one of the main strengths of
the theory used in this article. We will describe below in Section~\ref{sec:renorm} the precise
renormalisation procedure required to achieve this convergence. 

An admissible model $(\Pi,F)$ really defines an extension of the usual Taylor polynomials, which are
given by \eref{e:admissible1}. It is then natural to define spaces $\CD^{\gamma,\eta}$ which mimic a weighted
version of the H\"older spaces $\CC^\gamma$ in the following way. In order to state our
definition, given a compact space-time domain
$D$,  we denote by $D^{(2)}$ the
set of pairs of points $(z, \bar z) \in D^2$ such that furthermore $|z-\bar z| \le 1 \wedge {1\over 2}\sqrt{|t| \wedge |\bar t|}$, where we used $t$ and $\bar t$ as before as a shorthand for the time components of $z$ and $\bar z$.

\begin{definition}
A function $U \colon \R^2 \to \bigoplus_{\alpha<\gamma}\CT_\alpha$ belongs to $\CD^{\gamma,\eta}$ if, for every
compact domain $D$, one has
\begin{equ}[e:normU]
\|U\|_{\gamma,\eta} \eqdef \sup_{z \in D}\sup_{\alpha < \gamma} {\|U(z)\|_\alpha \over |t|^{({\eta - \alpha\over 2})\wedge 0}}
+ \sup_{(z,\bar z) \in D^{(2)}}\sup_{\alpha < \gamma} {\|U(z) - \GGamma{z\bar z} U(\bar z)\|_\alpha \over \bigl(|t|\wedge |\bar t|\bigr)^{\eta-\gamma \over 2} |z-\bar z|^{\gamma - \alpha}} < \infty\;.
\end{equ}
Here, we wrote $\|\tau\|_\alpha$ for the norm of the component of $\tau$ in $\CT_\alpha$ and we used the notation $\GGamma{z\bar z}$ as a shorthand for $\GGamma{\gamma_{z\bar z}} = F_z^{-1} \circ F_{\bar z}$
as above.
\end{definition}

\begin{remark}
The powers of $t$ appearing in this definition allow elements of $\CD^{\gamma,\eta}$ to exhibit a
singularity on the line $\{(t,x)\,:\, t = 0\}$. This is essential in order to be able to deal with solutions to
\eref{e:SPDE} with ``rough'' initial conditions.
\end{remark}

\begin{remark}
In order to streamline notations, we suppressed the dependence on the domain $D$ in this norm.
This is because in practice, we will only ever use this on some fixed space-time domain.
\end{remark}

Note that the space $\CD^{\gamma,\eta}$ does depend in a crucial way on the underlying model
$(\Pi,F)$. Therefore, it is not obvious \textit{a priori} how to compare elements belonging to $\CD^{\gamma,\eta}$,
but based on two different models. This is however crucial when investigating the convergence of solutions
to \eref{e:SPDEapprox} as $\eps \to 0$ since these will 
be obtained from a fixed point problem in $\CD^{\gamma,\eta}$, but where the underlying model depends on $\eps$.
Given two admissible models $(\Pi,F)$ and $(\bar \Pi, \bar F)$, the bounds \eref{e:bounds} yield a natural
notion of a semi-distance between the two models by considering, for a given compact domain $D \subset \R^2$,
the quantity
\begin{equ}[e:distPi]
\|\Pi;\bar \Pi\| = \sup_{z \in D} \sup_{\phi\in \CB \atop \lambda\in(0,1]} \sup_{\tau \in \CW} {\bigl|\bigl(\Pi_z \tau - \bar \Pi_z \tau\bigr)(\phi_z^\lambda)\bigr| \over \lambda^{|\tau|}}
+
\sup_{z,\bar z \in D} \sup_{\tau \in \CW_+} {\bigl|\gamma_{z\bar z}\tau - \bar \gamma_{z\bar z}\tau\bigr| \over |z-\bar z|^{|\tau|}}\;.
\end{equ}
We write this as a distance between $\Pi$ and $\bar \Pi$ only since, as already remarked, $\gamma$ and $\bar \gamma$ are
determined uniquely by $\Pi$ and $\bar \Pi$ via \eref{e:admissible}.
Again, we intentionally do not make the domain $D$ explicit in our notation. This is because our
spatial domain is bounded and we will only consider some finite time horizon $T$. As a 
consequence, we only ever care about our models in some sufficiently large bounded domain
anyway.

A natural distance between elements $U \in \CD^{\gamma,\eta}$ and $\bar U \in \bar \CD^{\gamma,\eta}$
(denoting by $\bar \CD^{\gamma,\eta}$ the space built over the model $(\bar \Pi, \bar F)$), is given by \eref{e:normU},
with $U(z)$ replaced by $U(z) - \bar U(z)$ in the first term and $U(z) - \GGamma{z\bar z} U(\bar z)$ replaced by
\begin{equ}[e:distUUbar]
U(z) - \bar U(z) - \GGamma{z\bar z} U(\bar z) +  \bGGamma{z\bar z} \bar U(\bar z)
\end{equ}
in the second term. We call this quantity $\|U; \bar U\|_{\gamma,\eta}$.
Note that this distance is \textit{not} a function of $U - \bar U$!
It does however define a distance function on the ``fibred space'' $\MM \ltimes \CD^{\gamma,\eta}$
which consists of pairs $((\Pi,F),U)$ of models and modelled distribution such that $U$ belongs 
to the space $\CD^{\gamma,\eta}$ associated to the model $(\Pi,F)$.

The idea now is to reformulate \eref{e:SPDEapprox}, but with $C_\eps = 0$ for the moment, 
as a fixed point problem in $\CD^{\gamma,\eta}$ 
(based on the canonical model $\Psi(\xi_\eps)$ built above) for suitable values of the exponents 
$\gamma$ and $\eta$. As a matter of fact, we will view it as a fixed point problem in
the subspace $\CD_\CU^{\gamma,\eta} \subset \CD^{\gamma,\eta}$ consisting of those functions
that take values in $\CT_\CU$. Any element $U \in \CD_\CU^{\gamma,\eta}$
can be written uniquely as
\begin{equ}[e:decompU]
U(z) = u(z) \one + \tilde U(z)\;,
\end{equ}
where $\tilde U(z)$ takes values in $\bigoplus_{\alpha > 0}\CT_\alpha$. Furthermore, it is a
consequence of \cite[Prop.~3.28]{Regularity} 
that if $\gamma > {1\over 2}-\kappa$, then the function $u$ is necessarily H\"older continuous, in the parabolic sense,
with H\"older exponent ${1\over 2}-\kappa$ on $\R_+ \times \R$. (Its modulus of H\"older continuity might become singular near $t = 0$.)
We will denote by $\CR \colon \CD_\CU^{\gamma,\eta} \to \CC^{{1\over 2}-\kappa}(\R_+\times\R)$
the map $U \mapsto u$. Note that with $U\in \CD_\CU^{\gamma,\eta}$ based on an admissible model $(\Pi,F)$, 
an equivalent way of defining $\CR$ is given by
\begin{equ}[e:defR]
\bigl(\CR U\bigr)(z) = \bigl(\Pi_z U(z)\bigr)(z)
\end{equ}
since, as a consequence of \eref{e:bounds}, $\bigl(\Pi_z \tilde U(z)\bigr)(z) = 0$.

This definition makes sense since it turns out that $\Pi_z \tau$ is necessarily a function (and not just a distribution)
for every $\tau \in \CU$. If it so happens that this is so for every $\tau \in \CW$ (as it is for example for the
models $(\Pi^\eps, F^\eps)$ mentioned above), then \eref{e:defR} actually 
makes sense for every $U \in \CD^{\gamma,\eta}$ (and not just for $U \in \CD_\CU^{\gamma,\eta}$).
A remarkable fact, and this is the content of \cite[Thm~3.10]{Regularity}, is that provided that $\gamma > 0$, the map
\begin{equ}[e:mapR]
(\Pi,U)\mapsto \CR U
\end{equ}
given by \eref{e:defR} is jointly (locally)
Lipschitz continuous with respect to the metric defined in \eref{e:normU} and \eref{e:distPi}, so that 
the map \eref{e:mapR} makes sense even in situations where the definition \eref{e:defR} is nonsensical!
This of course relies very heavily on the fact that we \textit{only} consider admissible models in \eref{e:mapR}
and not arbitrary functions $\Pi\colon \R^2 \to \CL(\CT,\CS')$. The map $\CR$ is called the \textit{reconstruction operator}
since it reconstructs the (global) distribution $\CR U$ from the (local) data $U$ and $\Pi$.

\begin{remark}\label{rem:reconstr}
It is possible to verify that if $U \in \CD^{\gamma,\eta}$ and one defines $\bar U$ by $\bar U(z) = \CQ_{\bar \gamma} U(z)$,
where $\CQ_{\bar \gamma}$ is the projection onto the subspace $\bigoplus_{\alpha < \bar \gamma} \CT_\alpha$,
then one has $\bar U \in \CD^{\bar \gamma,\eta}$, provided of course that $\bar \gamma \le \gamma$. If one still has
$\bar \gamma > 0$, then it is the case that $\CR \bar U = \CR U$. In view of this, one might think that only 
symbols with negative homogeneity ``matter''. This is not the case however, since one can easily loose regularity.
In particular, if $U \in \CD^{\gamma,\eta}$, then, combining the definition of $\CD^{\gamma,\eta}$
with the facts that $|\Xi| = -{3\over 2}-\kappa$, homogeneities are additive,
and by Remark~\ref{rem:prod} $\Gamma \Xi \tau = \Xi \Gamma \tau$ for every $\Gamma \in \CG$,
one easily verifies that $\Xi U \in \CD^{\gamma -{3\over 2}-\kappa,\eta -{3\over 2}-\kappa}$. 
As a consequence, one needs $\gamma > {3\over 2}+\kappa$ if one wishes
the reconstruction operator $\CR$ to be uniquely defined on $\Xi U$.
\end{remark}

\subsection{Abstract fixed point problem}

We now reformulate \eref{e:SPDEapprox} as a fixed point problem in
$\CD_\CU^{\gamma,\eta}$ for suitable values of $\gamma$ and $\eta$. Note first that by Duhamel's formula,
the unrenormalised version of \eref{e:SPDEapprox} 
(i.e.\ the equation with $C_\eps = 0$) is equivalent to the integral
equation
\begin{equ}[e:SPDEint]
u = P \star \bigl((H(u) + G(u)\xi_\eps)\one_{t > 0}\bigr) + Pu_0\;.
\end{equ}
Here, $P$ denotes the heat kernel, $\star$ denotes space-time convolution, 
and $Pu_
0$ denotes the solution to the heat equation with initial condition $u_0$.
A local solution is a pair $(u,T)$ with $T > 0$ and such that \eref{e:SPDEint}
holds on $[0,T] \times \R$. (Here we formulated the problem as if it were on $\R$, which 
one can easily reduce oneself to by considering the periodic extension of the solution.)
For this, we need to reformulate the operations of composition with $H$
and $G$, multiplication by $\xi_\eps$, and convolution against $P$.

Given $U \in \CD_\CU^{\gamma,\eta}$ as in \eref{e:decompU} (which defines $\tilde U$)
and a smooth function $G\colon \R \to \R$,
we write
\begin{equ}[e:FHat]
\bigl(\hat G(U)\bigr)(z) = G(u(z))\one + \sum_{k\ge 1} {D^k G(u(z))\over k!} \tilde U(z)^{k}\;,
\end{equ}
with the understanding that the product between any number of terms such that their
homogeneity adds up to $\gamma$ or more vanishes. 
It was then shown in \cite[Prop.~6.13]{Regularity} that the map $U \mapsto \hat G(U)$ is
locally Lipschitz continuous from $\CD_\CU^{\gamma,\eta}$ to itself, provided that
$\gamma > 0$ and $\eta \in [0,\gamma]$.

Furthermore, for every $\delta > 0$ such that furthermore $\delta < {1\over 2} - \kappa - \eta$, 
and for every $\gamma > 2-\delta$, it is possible to construct a 
linear operator  $\CP \colon \CD^{\gamma-2+\delta,\eta-2+\delta}\to \CD_\CU^{\gamma,\eta}$ 
with the following properties:
\begin{enumerate}
\item One has the identity $\CR \CP U = P \star \CR U$, so $\CP$ represents space-time 
convolution by the heat kernel.
\item One has $\CP U = \CI U + \tilde \CP U$\;, where $\tilde \CP U$ only takes values in $\bar \CT$,
the linear span of the Taylor polynomials $\{X^k\}$.
\item There exists $\theta > 0$ such that
\begin{equ}
\|\CP \one_{t > 0}U\|_{\gamma,\eta} \lesssim T^\theta \|U\|_{\gamma-2+\delta,\eta-2+\delta}\;,
\end{equ}
where the norms are taken over the domain $[0,T] \times \R$.
\end{enumerate}
For a proof of these properties, see Equ.~5.15, Thm~5.12, 
Prop.~6.16, and Thm~7.1 in \cite{Regularity}.
Finally, given a $\CC^\gamma$ function $u$, we write $\TT_\gamma u$ for its Taylor expansion
of (parabolic) order $\gamma$, namely 
\begin{equ}
\bigl(\TT_\gamma u\bigr)(z) = \sum_{|k| < \gamma} {X^k \over k!} \bigl(D^k u\bigr)(z) \in \bar \CT \subset \CT\;. 
\end{equ}

With all of these notations at hand, we can lift \eref{e:SPDE} in a very natural way
to a fixed point problem in $\CD_\CU^{\gamma,\eta}$, by looking for solutions $U$ to
\begin{equ}[e:FP]
U = \CP \bigl((\hat H(U) + \hat G(U)\Xi)\one_{t > 0}\bigr) + \TT_\gamma P u_0\;.
\end{equ}
The main results of \cite[Sec.~7]{Regularity} then allow us to obtain the following result.

\begin{theorem}\label{theo:gen}
Fix $\gamma \in ({3\over 2} + \kappa,\zeta)$ and let $F$, $G$ be smooth. 
Then, for every initial condition $u_0 \in \CC(S^1)$
and every admissible model $(\Pi,F)$, there exists a time $T$ such that the fixed point map
\eref{e:FP} has a unique solution in $\CD^{\gamma,0}([0,T] \times S^1)$. Furthermore, the solution 
is locally Lipschitz continuous as a function from $\CC(S^1) \times \MM$ into $\MM \ltimes \CD^{\gamma,0}$.
\end{theorem}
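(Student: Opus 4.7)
The plan is to recognize \eqref{e:FP} as a classical contraction mapping problem on a ball in $\CD_\CU^{\gamma,0}([0,T]\times S^1)$ and to piece together the regularity estimates already stated in the excerpt. Fix $\kappa$ small so that $\gamma \in (3/2+\kappa, \zeta)$, and choose $\delta \in (0, 1/2-\kappa)$. For $U \in \CD_\CU^{\gamma,0}$ with decomposition \eqref{e:decompU}, the composition result cited after \eqref{e:FHat} gives $\hat H(U),\, \hat G(U) \in \CD_\CU^{\gamma,0}$, locally Lipschitz in $U$. By Remark~\ref{rem:reconstr} and Remark~\ref{rem:prod}, multiplication by $\Xi$ sends $\CD_\CU^{\gamma,0}$ into $\CD^{\gamma - 3/2 - \kappa,\, -3/2-\kappa}$, so
\begin{equation*}
V(U) \eqdef \bigl(\hat H(U) + \hat G(U)\,\Xi\bigr)\one_{t>0} \in \CD^{\gamma - 3/2 - \kappa,\, -3/2-\kappa}.
\end{equation*}
The hypothesis $\gamma > 3/2+\kappa$ ensures that $\gamma - 3/2 - \kappa > 0$, which is essential for the reconstruction operator to be well-defined on $V(U)$.

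Next I would apply $\CP$. By the properties listed for $\CP$, it maps $\CD^{\gamma-3/2-\kappa,\,-3/2-\kappa}$ into $\CD_\CU^{\gamma + 1/2 - \kappa - \delta,\, 1/2 - \kappa - \delta}$, which by the $\CQ_\gamma$ projection of Remark~\ref{rem:reconstr} sits inside $\CD_\CU^{\gamma,0}$ (here I use $1/2-\kappa-\delta > 0$). The initial condition contribution $\TT_\gamma P u_0$ is smooth for $t > 0$, and since $u_0 \in \CC(S^1)$, a direct check of \eqref{e:normU} shows $\TT_\gamma P u_0 \in \CD_\CU^{\gamma,0}$. Thus the right-hand side of \eqref{e:FP} defines a map $\MM_T \colon \CD_\CU^{\gamma,0}([0,T]\times S^1) \to \CD_\CU^{\gamma,0}([0,T]\times S^1)$.

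For the contraction property, I would work on a ball of radius $R = 2\|\TT_\gamma P u_0\|_{\gamma,0} + 1$. The crucial input is the third bullet in the list of properties of $\CP$, which yields a factor $T^\theta$ when applied to objects localised by $\one_{t>0}$. Combined with the local Lipschitz continuity of $\hat G$, $\hat H$ and the boundedness of multiplication by $\Xi$, this gives estimates of the form
\begin{equation*}
\|\MM_T(U) - \MM_T(\bar U)\|_{\gamma,0} \le C_R\, T^\theta\, \|U - \bar U\|_{\gamma,0},
\end{equation*}
on the ball. Choosing $T$ small enough depending on $R$ and the model norm makes $\MM_T$ both stability- and contraction-preserving on that ball, and the Banach fixed point theorem produces a unique $U \in \CD_\CU^{\gamma,0}([0,T]\times S^1)$ solving \eqref{e:FP}.

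For the local Lipschitz dependence on $(u_0,(\Pi,F)) \in \CC(S^1)\times\MM$, I would run the same argument ``fibrewise'' on the space $\MM \ltimes \CD^{\gamma,0}$, using the distance \eqref{e:distPi}--\eqref{e:distUUbar}. All the component maps ($\hat H$, $\hat G$, multiplication by $\Xi$, the operator $\CP$, the reconstruction $\CR$) are known to be jointly locally Lipschitz on the fibred space with the same $T^\theta$ improvement factor, so the fixed point inherits joint local Lipschitz dependence by the usual perturbation argument. The main obstacle is essentially bookkeeping: tracking homogeneities across the four operations $\hat G(\cdot)$, multiplication by $\Xi$, $\CP$, and projection to ensure the map genuinely returns to $\CD_\CU^{\gamma,0}$ with a strictly positive exponent $\theta$ in $T$; once the inequality $\gamma > 3/2 + \kappa$ is exploited to guarantee $1/2-\kappa-\delta > 0$ for some $\delta > 0$, the rest follows from the abstract machinery quoted from \cite{Regularity}.
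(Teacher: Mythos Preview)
Your contraction argument on a fixed model is correct and matches the paper's reduction to \cite[Thm~7.8]{Regularity}. The genuine gap is in the last paragraph, where you assert that $\hat H$ and $\hat G$ ``are known to be jointly locally Lipschitz on the fibred space'' $\MM \ltimes \CD^{\gamma,0}$. This is precisely the point the paper flags as \emph{not} available off the shelf: \cite[Prop.~6.13]{Regularity} only compares $\hat G(f)$ and $\hat G(\bar f)$ when $f$ and $\bar f$ are built on the \emph{same} model, and says nothing about the distance $\|\hat G(f);\hat G(\bar f)\|_{\gamma,\eta}$ when $f \in \CD^{\gamma,\eta}(\Gamma)$ and $\bar f \in \CD^{\gamma,\eta}(\bar\Gamma)$ for two different models. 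Without this ``strong'' local Lipschitz bound, the perturbation argument for continuity in the model variable does not go through.

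The paper fills this gap with Proposition~\ref{prop:diffNonlin}, proved by a doubling trick: one builds an enlarged regularity structure $\hat\CT = (\CT\oplus\CT)/\!\!\sim$ carrying both models simultaneously, equips it with a $\delta$-dependent norm, and then applies \cite[Prop.~6.13]{Regularity} on this single doubled structure to $\iota f$ and $\bar\iota\bar f$. Optimising over $\delta$ (taking $\delta = \$\Gamma-\bar\Gamma\$_\gamma$) yields the bound \eqref{e:boundDiffNonlinear}. Once this proposition is in hand, the rest of your outline --- including the verification that $\TT_\gamma P u_0 \in \CD^{\gamma,0}$ via heat kernel derivative bounds --- coincides with the paper's proof.
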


In principle, one would think that the proof of Theorem~\ref{theo:gen} follows
immediately from Thm~7.8 and Prop.~7.11 in \cite{Regularity}. The only caveat is that
we do not know whether the map $U \mapsto \hat G(U)$ (and similarly for $\hat F$) is locally
Lipschitz continuous in the strong sense on $\CD^{\gamma,0}$. Indeed, while
\cite[Prop.~6.13]{Regularity} yields local Lipschitz continuity when considering arguments
built on the same model, it does \textit{not} make any claim regarding the comparison
of arguments based on different models. There is however a ``trick'' that allows us to obtain
such a result as a corollary of \cite[Prop.~6.13]{Regularity}, thus yielding the following statement.

\begin{proposition}\label{prop:diffNonlin}
Let $\gamma > 0$ and let $(\CT,\CG)$ be a regularity 
structure with no elements of negative homogeneity, 
endowed with a $\gamma$-regular\footnote{See \cite[Def.~4.6]{Regularity} for this terminology. In particular, the product used in this article is $\gamma$-regular for every $\gamma$ by Remark~\ref{rem:prod}.}
product and denote by $\chi > 0$ the lowest non-zero homogeneity appearing in $\CT$.
For $G\colon \R \to \R$ a function of class $\CC^{({\gamma \over \chi} \vee 1)+1}$, let $\hat G$ be
defined as in \eref{e:FHat}. 

Then, provided that $\eta \in [0,\gamma]$,
one has the bound
\begin{equ}[e:boundDiffNonlinear]
\|\hat G(f); \hat G(\bar f)\|_{\gamma,\eta} \lesssim \| f ; \bar f\|_{\gamma,\eta} + \$\Gamma - \bar \Gamma\$_\gamma \bigl(\|f\|_{\gamma,\eta} + \|\bar f\|_{\gamma,\eta}\bigr)\;,
\end{equ}
where we used the notation
\begin{equ}[e:defNormGamma]
\$\Gamma\$_\gamma = \sup_{\alpha < \gamma} \sup_{\tau \in \CT_\alpha\atop \|\tau\| = 1} \sup_{\beta < \alpha} \sup_{z \neq \bar z} {\bigl\|\Gamma_{z\bar z}\tau \bigr\|_\beta \over |z-\bar z|^{\alpha - \beta}} \;,
\end{equ}
with the innermost supremum running over the same domain as the one on which the
norms in \eref{e:boundDiffNonlinear} are taken. Here, $f \in \CD^{\gamma,\eta}$
and $\bar f \in \bar \CD^{\gamma,\eta}$, where $\bar \CD^{\gamma,\eta}$ is the space based on
$\bar \Gamma$.
\end{proposition}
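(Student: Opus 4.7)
My strategy is to reduce the different-model bound to the same-model Prop.~6.13 by encoding the pair $(f, \bar f)$ as a single modelled distribution on an enlarged ``combined'' regularity structure built from the two models simultaneously. Concretely, set $\hat\CT = \CT \oplus \CT$ with homogeneities inherited componentwise, and declare that each pair $(\Gamma, \bar\Gamma) \in \CG \times \CG$ acts on $\hat\CT$ by the block-triangular operator
\[
\hat\Gamma(a, b) = \bigl(\Gamma a + (\Gamma - \bar\Gamma) b,\ \bar\Gamma b\bigr)\;,
\]
together with the commutative, associative product $(a_1, b_1)(a_2, b_2) = (a_1 a_2 + a_1 b_2 + b_1 a_2,\ b_1 b_2)$. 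One checks directly that $\hat\Gamma$ is multiplicative, that the $\hat\Gamma$ form a group, and that the change of variables $\phi(a,b) = (a+b, b)$ is an algebra isomorphism onto $\CT \times \CT$ with coordinatewise product, identifying $\hat\Gamma$ with the diagonal action $(\Gamma, \bar\Gamma)$. In particular $(\hat\CT, \hat\CG)$ is a bona fide regularity structure with $\gamma$-regular product and no elements of negative homogeneity.

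The point of this nondiagonal presentation is the following identity, which follows from the definition of $\hat\Gamma$: setting $\hat f(z) = (f(z) - \bar f(z),\ \bar f(z))$, we have
\[
\hat f(z) - \hat\Gamma_{z\bar z}\hat f(\bar z) = \bigl(f(z) - \bar f(z) - \Gamma_{z\bar z} f(\bar z) + \bar\Gamma_{z\bar z}\bar f(\bar z),\ \bar f(z) - \bar\Gamma_{z\bar z}\bar f(\bar z)\bigr)\;.
\]
The two blocks on the right are precisely the quantities controlled by $\|f;\bar f\|_{\gamma,\eta}$ and $\|\bar f\|_{\gamma,\eta}$ respectively, so $\hat f \in \hat\CD^{\gamma,\eta}$ with $\|\hat f\|^{\hat\Gamma}_{\gamma,\eta} \lesssim \|f;\bar f\|_{\gamma,\eta} + \|\bar f\|_{\gamma,\eta}$. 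The natural lift $\hat G_{\hat\CT}(a,b) = (\hat G(a+b) - \hat G(b),\ \hat G(b))$ then satisfies $\hat G_{\hat\CT}(\hat f)(z) = (\hat G(f)(z) - \hat G(\bar f)(z),\ \hat G(\bar f)(z))$ by construction, and becomes the coordinatewise scalar lift of $G$ under $\phi$, so that Prop.~6.13 applies verbatim.

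Applying Prop.~6.13 to $\hat G_{\hat\CT}$ on the combined structure yields a bound of the form $\|\hat G_{\hat\CT}(\hat f)\|^{\hat\Gamma}_{\gamma,\eta} \lesssim P\bigl(\|\hat f\|^{\hat\Gamma}_{\gamma,\eta}\bigr)$ with implicit constant depending polynomially on $\$\hat\Gamma\$_\gamma$. Reading off the first block of this bound through the displayed identity above gives $\|\hat G(f); \hat G(\bar f)\|_{\gamma,\eta}$ on the left, and a direct inspection of the block-triangular form of $\hat\Gamma$ gives
\[
\$\hat\Gamma\$_\gamma \lesssim \$\Gamma\$_\gamma + \$\bar\Gamma\$_\gamma + \$\Gamma - \bar\Gamma\$_\gamma\;,
\]
so that the off-diagonal contribution enters wherever the structure-group constant appears in the proof of Prop.~6.13, yielding the desired factor $\$\Gamma - \bar\Gamma\$_\gamma$.

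The main obstacle will be the final bookkeeping: one must separate carefully the ``diagonal'' contributions to the Lipschitz bound (which combine to give exactly the $\|f;\bar f\|_{\gamma,\eta}$ term) from the ``off-diagonal'' ones (which produce $\$\Gamma - \bar\Gamma\$_\gamma$ times a polynomial in the norms). The $\|f\| + \|\bar f\|$ factor itself arises naturally because Prop.~6.13 has a polynomial Lipschitz dependence on the size of its argument---reflecting the iterated products appearing in the definition of $\hat G$---and in the combined form the relevant size is bounded by $\|f;\bar f\| + \|\bar f\|$; a symmetric argument swapping the roles of $f$ and $\bar f$ then yields the stated symmetric form. This careful tracking of how powers of $\$\hat\Gamma\$_\gamma$ enter the proof of Prop.~6.13 is the nontrivial part of the argument; the algebraic construction itself is essentially forced by the requirement that $\hat f$ capture exactly the fibred distance $\|f;\bar f\|_{\gamma,\eta}$.
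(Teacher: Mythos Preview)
Your doubling idea is the same as the paper's, and your change of variables $\phi$ shows that the block-triangular action is conjugate to the diagonal action $(\Gamma,\bar\Gamma)$ the paper writes down directly. The gap is in what you extract from Prop.~6.13. Applying its \emph{boundedness} to the combined object $\hat f$ and ``reading off the first block'' gives at best $\|\hat G(f);\hat G(\bar f)\|_{\gamma,\eta}\lesssim P\bigl(\|f;\bar f\|_{\gamma,\eta}+\|\bar f\|_{\gamma,\eta}\bigr)$, and this right-hand side does not tend to zero as $\|f;\bar f\|\to 0$. Nor can a small factor $\$\Gamma-\bar\Gamma\$_\gamma$ be squeezed out of your bound on $\$\hat\Gamma\$_\gamma$: since already $\$\Gamma-\bar\Gamma\$_\gamma\le\$\Gamma\$_\gamma+\$\bar\Gamma\$_\gamma$, the combined structure-group norm is of order one regardless of how close the two models are. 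Your proposed fallback of tracking diagonal versus off-diagonal contributions through the proof of Prop.~6.13 amounts to redoing that proof, not reducing to it. There is also a secondary issue: your $\hat\CT_0$ is two-dimensional, so $\hat f$ does not take values in a function-like sector and the abstract composition \eref{e:FHat} is not directly defined on it.

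The paper's missing ingredient is a $\delta$-parametrised norm on $(\CT\oplus\CT)/\!\sim$ (the quotient identifying the two copies of $\one$ fixes the dimension issue), namely $\|(\tau,\bar\tau)\|_\alpha=\|\tau+\bar\tau\|_\alpha+\delta\,\|\tau-\bar\tau\|_\alpha$. With injections $\iota\tau=(\tau,0)$ and $\bar\iota\tau=(0,\tau)$ one gets the two-sided estimate $\|f;\bar f\|\le\|\iota f-\bar\iota\bar f\|\le\|f;\bar f\|+\delta\bigl(\|f\|+\|\bar f\|\bigr)$, and now the \emph{Lipschitz} part of Prop.~6.13, applied to $\iota f$ versus $\bar\iota\bar f$ on the \emph{same} model $\hat\Gamma$, yields $\|\hat G(f);\hat G(\bar f)\|\le\|\hat G(\iota f)-\hat G(\bar\iota\bar f)\|\lesssim\|\iota f-\bar\iota\bar f\|$ as a genuine black box. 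Choosing $\delta=\$\Gamma-\bar\Gamma\$_\gamma$ keeps $\$\hat\Gamma\$_\gamma$ uniformly bounded while producing exactly the term $\$\Gamma-\bar\Gamma\$_\gamma\bigl(\|f\|+\|\bar f\|\bigr)$. The $\delta$-weighted norm, together with the use of Lipschitz continuity rather than mere boundedness, is the idea you are missing.
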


\begin{remark}
The proportionality constant implicit in the above statement is uniform over the set
of all $f$, $\bar f$, $\Gamma$ and $\bar \Gamma$ with
\begin{equ}
\|f\|_{\gamma,\eta} + \|f\|_{\gamma,\eta} + \$\Gamma\$_\gamma + \$\bar \Gamma\$_\gamma \le R\;,
\end{equ}
for arbitrary $R > 0$. Since $G$ is a smooth function with arbitrary growth, one
cannot expect better in general.
\end{remark}

\begin{proof}
The proof relies on the following construction. Given our regularity structure $\CT$ with
structure group $\CG$, we can double the structure in the following way. First, we set
\begin{equ}
\hat \CT = \CT \oplus \CT / \sim\;,
\end{equ}
where $\sim$ is the equivalence relation such that $(\tau_1, \tau_2) \sim (\bar \tau_1, \bar \tau_2)$ if and only if there exists $c \in \R$ such that $\bar \tau_1 = \tau_1 + c \one$ and
$\bar \tau_2 = \tau_2 - c \one$. Since $\one$ is always invariant under the structure
group $\CG$, the group $\hat \CG = \CG \oplus \CG$ acting on $\CT \oplus \CT$ induces 
a natural action on $\hat \CT$.

We now introduce a parameter $\delta >0$ and, for $\alpha > 0$, we equip the 
spaces $\hat \CT_\alpha = \CT_\alpha \oplus \CT_\alpha$ with $\delta$-dependent 
norms by setting
\begin{equ}[e:defNorm]
\|(\tau,\bar \tau)\|_\alpha = \|\tau + \bar\tau\|_\alpha + \delta \, \|\tau - \bar\tau\|_\alpha\;.
\end{equ} 
In the special case $\alpha = 0$, we simply set 
\begin{equ}
\|(\tau,\bar\tau)\|_0 = \|\tau + \bar \tau\|_0\;,
\end{equ} 
which is independent of the representative of our equivalence class. We also have natural
injection maps $\iota, \bar \iota\colon \CT \to \hat \CT$ given by
$\iota \tau = (\tau,0)$ and $\bar\iota \tau = (0,\tau)$. While these are not isometries,
one has $\max\{\|\iota\|,\|\bar \iota\|\} \le 2$, uniformly over $\delta \in (0,1]$. 
Note that 
every element in $\hat \CT$ has a canonical representative for which the two components
proportional to $\one$ are equal, so that \eref{e:defNorm} holds for every component.
Given 
an element $\hat \Gamma = (\Gamma, \bar \Gamma) \in \hat \CG$ and $\hat \tau \in \hat \CT$ with
canonical representative $\hat \tau = (\tau,\bar \tau)$, we then have for any
exponent $\alpha$ the bound
\begin{equs}
\|\hat \Gamma \hat \tau\|_\alpha &= \|\Gamma \tau + \bar \Gamma \bar \tau\|_\alpha + \delta \|\Gamma \tau - \bar \Gamma \bar \tau\|_\alpha \\
&\le {1\over 2} \bigl(\|(\Gamma + \bar \Gamma)(\tau +  \bar \tau)\|_\alpha + \|(\Gamma - \bar \Gamma)(\tau -  \bar \tau)\|_\alpha \bigr) \\
&\quad + {\delta\over 2} \bigl(\|(\Gamma + \bar \Gamma)(\tau -  \bar \tau)\|_\alpha + \|(\Gamma - \bar \Gamma)(\tau +  \bar \tau)\|_\alpha \bigr)\;, \label{e:boundGammaHat}
\end{equs}
uniformly over the choice of $\delta$. In particular, if we set $\hat \Gamma_{xy} = (\Gamma_{xy}, \bar \Gamma_{xy})$, then it follows from \eref{e:boundGammaHat} and \eref{e:defNorm} that we have
\begin{equ}[e:normHat]
\$\hat \Gamma\$_\gamma \le {\delta + \delta^{-1}\over 2} \$\Gamma - \bar \Gamma\$_\gamma + \$\Gamma\$_\gamma + \$\bar \Gamma\$_\gamma \;.
\end{equ}
The idea will be to consider situations where
$\Gamma$ and $\bar \Gamma$ are very close to each other, so that even a choice $\delta \ll 1$
leads to order one bounds on the norm of $\hat \Gamma$. 

Consider now two modelled distributions $f$ and $\bar f$, where $f \in \CD^{\gamma,\eta}(\Gamma)$
and $\bar f \in \CD^{\gamma,\eta}(\bar \Gamma)$, for two models $(\Pi,\Gamma)$ and $(\bar \Pi,\bar \Gamma)$. It is very natural to lift these two models to a single model on $\hat \CT$ by
setting $\hat \Gamma_{xy} = (\Gamma_{xy}, \bar \Gamma_{xy})$ and $\hat \Pi_x (\tau,\bar \tau) = \Pi_x \tau + \bar \Pi_x \bar \tau$. With these notations at hand, it follows immediately
from the definitions and from the fact that 
\begin{equ}
\hat \Gamma_{xy} \iota = \iota \Gamma_{xy}\;,\qquad
\hat \Gamma_{xy} \bar \iota = \bar \iota \bar \Gamma_{xy}\;,
\end{equ}
that both $\iota f$ and $\bar \iota \bar f$ belong to $\CD^{\gamma,\eta}(\hat \Gamma)$ with
norms that are bounded by at most twice their original norms, provided that $\delta \in (0,1]$.

More precisely, it follows immediately from \eref{e:defNorm}, combined with the definitions
of the (semi-)norms $\|\cdot\|_{\gamma,\eta}$, that one has the two-sided bound
\begin{equ}
\|f;\bar f\|_{\gamma,\eta} \le \|\iota f - \bar \iota \bar f\|_{\gamma,\eta}
\le \|f;\bar f\|_{\gamma,\eta} + \delta \bigl(\|f\|_{\gamma,\eta} + \|\bar f\|_{\gamma,\eta}\bigr)\;,
\end{equ}
uniformly over $\delta \in (0,1]$. We are now at the stage where we can
use \cite[Prop.~6.13]{Regularity}, so that
\begin{equs}
\|\hat G(f); \hat G(\bar f)\|_{\gamma,\eta} &\le \|\iota \hat G(f) - \bar \iota \hat G(\bar f)\|_{\gamma,\eta} = \|\hat G(\iota f) - \hat G(\bar \iota \bar f)\|_{\gamma,\eta} 
\lesssim \|\iota f - \bar \iota \bar f\|_{\gamma,\eta} \\
&\le \| f ; \bar f\|_{\gamma,\eta} + \delta \bigl(\|f\|_{\gamma,\eta} + \|\bar f\|_{\gamma,\eta}\bigr)\;.
\end{equs}
This bound is uniform over all $\delta \in (0,1]$, all pairs of models $(\Pi,\Gamma)$ and
$(\bar \Pi,\bar \Gamma)$ such that the induced model $(\hat \Pi,\hat \Gamma)$ has ``norm''
bounded by a fixed constant, and all functions $f \in \CD^{\gamma,\eta}(\Gamma)$
and $\bar f \in \CD^{\gamma,\eta}(\bar \Gamma)$ with corresponding norms bounded by a 
fixed constant. At this stage it looks like one could take $\delta$ arbitrarily small.
The choice of $\delta$ is however limited by  \eref{e:normHat}, which suggests  
that an optimal choice is given by
\begin{equ}
\delta = \$\Gamma - \bar \Gamma\$_\gamma \;,
\end{equ}
With this particular choice of $\delta$, it follows from \eref{e:normHat} that for every $R>0$
there exists $C$ such that $\$\hat \Gamma\$_\gamma \le C$ for any two models $\Gamma$ and $\bar \Gamma$
such that $\$\Gamma\$_\gamma + \$\bar \Gamma\$_\gamma \le R$.
The claim now follows at once.
\end{proof}

\begin{proof}[of Theorem~\ref{theo:gen}]
This is now an almost immediate corollary of \cite[Thm~7.8]{Regularity}, noting that
Proposition~\ref{prop:diffNonlin} guarantees that the nonlinearity is ``strongly locally Lipschitz''
in the terminology of that article. 

The only thing that needs to be verified is that one has indeed $\TT_\gamma P u_0 \in \CD^{\gamma,0}$.
For this, we first note that the heat kernel $P$ satisfies for every $k,\ell \in \N$ the bound
\begin{equ}
\int_\R |\d_x^k \d_t^\ell P(t,x)|\,dx \lesssim t^{-\ell - {k \over 2}}\;,
\end{equ}
uniformly over $(t,x) \in [0,T]\times S^1$ for every fixed final time $T$.
It immediately follows from this bound that the first quantity in \eref{e:normU} is bounded, for every fixed $\gamma > 0$,
by a multiple of $\|u_0\|_{L^\infty}$. The second quantity is then bounded as a consequence of this
by using the remainder formula of \cite[Thm~A.1]{Regularity}.
\end{proof}

\begin{remark}
Note that, a consequence of the second property of $\CP$, any solution $U$ to
\eref{e:FP} satisfies
\begin{equ}[e:propFP]
U(z) - \CI\bigl(\hat H(U(z)) + \hat G(U(z))\Xi\bigr) \in \bar \CT\;,
\end{equ}
for all points $z = (t,x)$ with $t \in (0,T)$.
This fact will be very important in the sequel when we study the effect of our
renormalisation procedure on solutions.
\end{remark}

\begin{remark}\label{rem:regularity}
When applied to our situation, the exponent $\chi$ appearing in the statement of 
Proposition~\ref{prop:diffNonlin} is equal to $\chi = {1\over 2} - \kappa$.
As a consequence, provided that we restrict ourselves to $\gamma < 2 - 4\kappa$, 
we only need $F \in \CC^2$ and $G \in \CC^5$ for the conclusion of Theorem~\ref{theo:gen} 
to hold.
\end{remark}

\section{Renormalisation procedure and main result}
\label{sec:renorm}

At this stage, we note that while Theorem~\ref{theo:gen} allows us to identify solutions
to \eref{e:SPDEapprox} with $C_\eps = 0$ with solutions to the abstract fixed point problem
\eref{e:FP} for a suitable model, we announced a convergence result where we simultaneously
let $C_\eps \to \infty$ and introduce additional correction terms of order $1$. 
In particular, this (correctly) suggests that one has no hope to prove
that the sequence of models $\Psi(\xi_\eps)$ converges to a limit as $\eps \to 0$.

In order to obtain our main result, the strategy is to build a sequence $M_\eps\Psi(\xi_\eps)$
of \textit{renormalised} models, where $M_\eps$ denotes a suitable 
continuous map on the space of admissible models, such that the following properties hold.
First, we show that the sequence $M_\eps\Psi(\xi_\eps)$ converges to a limiting model in $\MM$.
Second, we show that if $U$ is a (local) solution to \eref{e:FP} for the model $M_\eps\Psi(\xi_\eps)$,
then the function $u$ in the decomposition \eref{e:decompU} is the classical solution to the 
PDE \eref{e:SPDEapprox}, but with $H$ replaced by $\bar H$ as in \eref{e:defbarH}. 
Finally, we show that solutions to \eref{e:FP} depend continuously on the model,
thus implying that solutions to \eref{e:SPDEapprox} converge to a limit,
and we identify this limit as the It\^o solution to \eref{e:SPDE}.

In order to implement this strategy, we first explain how the one-parameter family of
transformations $M_\eps$ is built. This requires a better understanding of the algebraic properties
of our regularity structure. In order to simplify notations, we first introduce a graphical shorthand
notation for the elements in $\CW$.

\subsection{Shorthand notation}

From now on, we will highlight the canonical basis vectors of $\CT$ (i.e.\ the elements of $\CW$) by drawing
them in \symbol{blue}, so that it is easy to distinguish them from either real-valued coefficients or
elements of $\CT_+$. Note that while the formal structure of $\CT_+$ is quite similar to
that of $\CT$, its role in the theory is very different. Elements of $\CT$ are there to index the
different components of the underlying model $\Pi_z$, while elements of $\CT_+$ index the matrix
elements of the linear maps $F_z$.
While the notation introduced in Section~\ref{sec:regStruc} is convenient 
for making general statements about $\CT$ or $\CG$, it is not very efficient when
talking about any one specific formal expression, since it soon becomes rather lengthy.

We therefore introduce the following alternative graphical notation. 
Instead of $\sXi$, we
just draw a circle, i.e.\ we have $\sXi = \<Xi>$. Each occurrence of the abstract integration
map $\CI$ is then denoted by a downward facing 
straight line and expressions are multiplied by 
simply joining the trees representing them by their roots. We also use the shorthand
$\symbol{\Xi X_1} = \<XiX>$. For example, we have
\begin{equ}
\symbol{\CI(\Xi)} = \<IXi>\;,\quad
\symbol{\Xi \CI(\Xi)} = \<Xi2>\;,\quad \symbol{\Xi\ \I(X_1\Xi)} = \<Xi2X>\;,\quad
\symbol{\Xi\I^2(\Xi)} = \<Xi22>\;, \quad \ldots
\end{equ} 
While this graphical notation does not allow to describe \textit{every} formal expression in $\CW$
(we have no notation for $\symbol{X_0\Xi}$ for example), it will be sufficient for our needs.
In order to describe elements in $\CT_+$, we will 
also use $\J'$ for $\J_{(0,1)}$, $\J''$ for $\J_{(0,2)}$ and $\dot{\J}$ for $\J_{(1,0)}$.
In view of Remark~\ref{rem:reconstr}, an important role will be played by elements in $\CW$ of 
negative homogeneity, so we list all of them here:
\begin{equ}[e:symbols]
\begin{tabular}{lll}\toprule
Homogeneity & Symbol(s)\\
\midrule
$-{3\over 2} - \kappa$ &  \<Xi> \\
$-1 - 2\kappa$ &  \<Xi2> \\
$-{1\over 2} - 3\kappa$ &  \<Xi3>\,, \<Xi3b> \\
$-{1\over 2} - \kappa$ &  \<XiX> \\
$- 4\kappa$ &  \<Xi4>\,, \<Xi4b>\,, \<Xi4c>\,, \<Xi4e> \\
$- 2\kappa$ &  \<Xi2X>\,, \<XXi2> \\
$0$ &  $\1$ \\
\bottomrule
\end{tabular}
\end{equ}
Note that in principle this list may get longer if we take $\kappa$ too large. One can see
by simple inspection that for sufficiently small $\kappa$, the elements of smallest positive homogeneity 
have homogeneity ${1\over 2}-5\kappa$. Therefore, as long as we assume
that $\kappa < 1/10$, the list \eref{e:symbols} does not change, so we make
this a standing assumption. 

\subsection{General renormalisation group}

It was shown in \cite[Sec.~8.3]{Regularity} that one
can build a natural family of continuous transformations of $\MM$ in the following way.
First, we write
\begin{equs}
\CW_0 &= \Big\{\<Xi>, \<Xi2>, \<Xi3>, \<Xi3b>, \<XiX>, \<Xi4>, \<Xi4c>, \<Xi4e>,\<Xi4b>,\<Xi2X>,\<XXi2>,\1,\<IXi^2>,\<IXi2>,\<Xi22>, \<IXi>, \symbol{X}_1\Big\}\;,\\
\CW_\star &= \Big\{\<Xi>, \<Xi2>, \<Xi3>,\<Xi3b>, \<XiX>,\<IXi>\Big\}\;,
\end{equs}
we denote by $\CT_0 \subset \CT$ the linear span of $\CW_0$, and by $\CT_0^+ \subset \CT_+$ the 
free algebra generated as in \eref{e:genT+} by $X$ and $\CW_0^+ \eqdef \{\CJ_k(\tau)\,:\, \tau \in \CW_\star\;,\; |k| < |\tau|+2\}$. The set $\CW_0$ consists of all symbols 
in $\CW$ of negative homogeneity, as well as those symbols generated from them by the
renormalisation procedure described in Section~\ref{sec:opL} below.
The set $\CT_0^+$ generated from $\CW_\star$ consists of the smallest collection of symbols
required to describe the action of $\CG$ on $\CT_0$.
Consider then an arbitrary linear map $M \colon \CT_0 \to \CT_0$ which is such that
\begin{equs}[2][e:basicM]
M\1&=\1\;, \quad &\quad M(\symbol{X^k}\tau)&=\symbol{X^k} M\tau\;,\\
M\sXi&=\sXi\;,\quad&
M(\I(\tau))&=\I(M\tau)\;,
\end{equs}
where the last identity is assumed to hold for every $\tau \in \CT_0$ such that $\CI(\tau) \in \CT_0$.
It was then shown in \cite[Prop.~8.36]{Regularity} that there exist \textit{unique} maps
$\hat M \colon \CT_0^+ \to \CT_0^+$ and $\DeltaM\colon \CT_0 \to \CT_0 \otimes \CT_0^+$ satisfying the identities
\begin{equs}[e:propMM]
\hat{M}\J_k(\tau)&=\M(\J_k\otimes I)\DeltaM\tau\;,\\
\hat M  (\tau \bar \tau)&= (\hat M \tau)(\hat M\bar  \tau)\;,\\
(I\otimes\M)(\Delta\otimes I)\DeltaM \tau&=(M\otimes\hat{M})\Delta \tau\;,
\end{equs}
where $\M:\CT_0^+\otimes \CT_0^+\to \CT_0^+$ denotes the multiplication map, and such that $\hat M$
leaves $X^k$ invariant. Note that the first identity in \eqref{e:propMM} should be checked for all 
$\tau\in\CW_\ast$, the second for all $\tau, \bar \tau\in\CT_0^+$, and the third for all $\tau\in \CW_0$.
With these notations at hand, we have the following definition.

\begin{definition}\label{def:renorm}
The \textit{renormalisation group} $\RR$ associated to our regularity structure is given by the
set of all linear maps $M$ as above 
such that furthermore, for every $\tau \in \CW_0$, one has
\begin{equ}
\DeltaM \tau = \tau \otimes \one + \sum \tau^{(1)}\otimes \tau^{(2)} \;.
\end{equ}
where each of the terms $\tau^{(1)}$ appearing in these sums 
satisfies $|\tau^{(1)}| > |\tau|$.
\end{definition}

Given any $M \in \RR$ and given an admissible model $(\Pi, F)$, we can define a ``renormalised
model'' $(\hat \Pi, \hat F)$ by setting
\begin{equ}[e:renorm]
\hat \Pi_x \tau = (\Pi_x \otimes f_x) \DeltaM \tau\;,\quad
\hat f_x = f_x  \hat M\;.
\end{equ}
Note that in principle $(\hat \Pi, \hat F)$ is only defined on the smaller regularity
structure $\CT_0$. However, it follows from \cite[Prop.~4.11]{Regularity} and \cite[Thm.~5.14]{Regularity}
that any admissible model
on  $\CT_0$ extends uniquely and continuously to a model on all of $\CT$. We will implicitly use this
extension in the sequel.
One then has the following:

\begin{theorem}
The map $(M,\Pi, F) \mapsto (\hat \Pi, \hat F)$ is continuous from $\RR \times \MM$ to $\MM$.
\end{theorem}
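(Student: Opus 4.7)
The plan is to verify three things: (i) that for every admissible $(\Pi,F)$ and every $M\in\RR$ the pair $(\hat\Pi,\hat F)$ defined by \eref{e:renorm} is again an admissible model, (ii) that the analytic bounds \eref{e:bounds} hold for $(\hat\Pi,\hat F)$ with constants depending continuously on those of $(\Pi,F)$ and on $M$, and (iii) that the dependence on $M$ itself is continuous. Since both $\CT_0$ and $\CT_0^+$ are finite dimensional, once the analytic bounds are in place, continuity of the map will follow essentially automatically because $\Delta^M$ and $\hat M$ depend continuously (in fact polynomially) on $M$ through the defining identities \eref{e:propMM}.

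For the analytic bound on $\hat\Pi$, the crucial structural input is the defining property of $\RR$ in Definition~\ref{def:renorm}: for every basis vector $\tau\in\CW_0$ we may write $\Delta^M\tau = \tau\otimes\one + \sum \tau^{(1)}\otimes\tau^{(2)}$ with $|\tau^{(1)}|>|\tau|$. Testing $\hat\Pi_z\tau$ against $\phi_z^\lambda$ with $\lambda\in(0,1]$ then gives
\begin{equ}
\bigl(\hat\Pi_z\tau\bigr)(\phi_z^\lambda) = \bigl(\Pi_z\tau\bigr)(\phi_z^\lambda) + \sum \bigl(\Pi_z\tau^{(1)}\bigr)(\phi_z^\lambda)\, f_z(\tau^{(2)})\;.
\end{equ}
The first term is bounded by $\lambda^{|\tau|}$ by assumption, while each summand is bounded by $\lambda^{|\tau^{(1)}|}\le \lambda^{|\tau|}$ times $|f_z(\tau^{(2)})|$, and on any bounded set of space-time points the latter is controlled by the second bound in \eref{e:bounds}. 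By Remark~\ref{rem:redundant2} it suffices to check the bound on $\hat\Pi$; the bound on $\hat\gamma_{z\bar z}$ then comes for free, provided we first confirm admissibility and the algebraic identity for $(\hat\Pi,\hat F)$.

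The main obstacle, and the step where the three relations \eref{e:propMM} are used in earnest, is verifying the algebraic identity \eref{e:algebraic} together with admissibility \eref{e:admissible}. Admissibility is immediate on $X^k$ and on $\Xi$ from \eref{e:basicM}; for elements of the form $\CI(\tau)$ one uses that $M$ commutes with $\CI$, that $\hat M$ fixes each $X^k$, and the definition \eref{e:admissible2} of $f_z$, combined with the first identity of \eref{e:propMM} applied with $\tau\in\CW_\star$, to confirm that $\hat f_z(\CJ_k\tau) = -\int D^k K(z-\bar z)(\hat\Pi_z\tau)(d\bar z)$. For the consistency identity, one computes
\begin{equ}
\hat\Pi_z \hat F_z^{-1} = (\Pi_z\otimes f_z)\Delta^M \hat F_z^{-1}\;,
\end{equ}
and the third line of \eref{e:propMM}, namely $(I\otimes\M)(\Delta\otimes I)\Delta^M = (M\otimes\hat M)\Delta$, is precisely the relation needed so that this rearranges into an expression depending on $z$ only through $\Pi_z F_z^{-1}$, which is $z$-independent by assumption on $(\Pi,F)$. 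This step is largely bookkeeping, but the algebra is delicate and it is exactly where one must be careful that each $\DeltaM \tau$ lives in the right tensor product; this is guaranteed by the restriction to $\CT_0$ and by the fact that $\CT_0^+$ is generated by $\CW_0^+$.

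Finally, continuity of $(M,\Pi,F)\mapsto(\hat\Pi,\hat F)$ reduces to observing that $\Delta^M$ and $\hat M$ are determined by $M$ through \eref{e:propMM} via a triangular system (lower homogeneity first) of linear equations with invertible leading coefficients, so they depend continuously on $M$. Given this, the defining formulae \eref{e:renorm} exhibit the map as a composition of finite sums of products of continuous expressions in $(\Pi,F,M)$, and the bounds obtained in the second step give the desired continuity with respect to the model pseudo-metric \eref{e:distPi}. We can extend the resulting admissible model from $\CT_0$ to all of $\CT$ using \cite[Prop.~4.11]{Regularity}, which is itself continuous, so no further estimates are needed to conclude.
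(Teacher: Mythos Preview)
Your proposal is correct and follows essentially the same approach as the paper. The paper's proof is very brief: it simply cites \cite[Thm.~8.44]{Regularity} for the fact that $(\hat\Pi,\hat F)$ satisfies the first bound in \eref{e:bounds} together with \eref{e:algebraic} and \eref{e:admissible}, then invokes \cite[Thm.~5.14]{Regularity} (i.e.\ Remark~\ref{rem:redundant2}) to obtain the second bound automatically, and asserts that continuity in $M$ follows in the same way. Your write-up is effectively a sketch of what \cite[Thm.~8.44]{Regularity} does---using the upper-triangular form of $\DeltaM$ from Definition~\ref{def:renorm} for the $\hat\Pi$ bound, the identities \eref{e:propMM} for admissibility and the algebraic relation, and finite-dimensionality for continuity in $M$---so the two arguments coincide at the level of ideas.
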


\begin{proof}
The fact that $(\hat \Pi, \hat F)$ satisfies the first bound in \eref{e:bounds} as well as
\eref{e:algebraic} and \eref{e:admissible} was shown in \cite[Thm.~8.44]{Regularity}. 
The fact that the second bound in \eref{e:bounds} also holds does in turn follow automatically
from \cite[Thm.~5.14]{Regularity}, using the fact that $\hat \Pi$ is again admissible. 
The continuity with respect to $M$ follows in the same way.
\end{proof}

\subsection{Renormalization map}\label{sec:opL}

We will not give a full characterisation of $\RR$, but we will instead describe a three-dimensional 
subgroup which is sufficient for our needs.
We write $M = \exp(-c L - c^{(1)} L^{(1)} - c^{(2)} L^{(2)})$, where $c, c^{(1)}, c^{(2)} \in \R$ and $L$, $L^{(1)}$, $L^{(2)}$ 
are linear maps on $\CT_0$. The map $L^{(1)}$ is simply given by
$L^{(1)} \<Xi4> = \1$, as well as $L^{(1)} \tau = 0$ for every $\tau \in \CW_0 \setminus \{\<Xi4>\}$.
Similarly, $L^{(2)}$ is given by
$L^{(2)} \<Xi4e> = \1$, and $L^{(2)} \tau = 0$ otherwise.

The map $L$ on the other hand is more complicated to describe. First, one has 
$L\colon \<Xi2> \mapsto \1$. Furthermore, if $\tau$ is a more complicated expression, 
then $L$ iterates over all occurrences of $\<Xi2>$ as a ``subsymbol'' of $\tau$ 
and ``erases'' it in the graphical notation.

More precisely, one has the identities
\begin{equs}[3][e:defL]
L\<Xi2>&=\1\;, \quad&\quad
L\<Xi3>&=\<IXi>\;, \quad&\quad
L\<Xi3b>&=2\,\<IXi>\;, \\
L\<Xi4b>&=3\,\<IXi^2>\;, \quad&\quad
L \<Xi4e> &= \<IXi2> + \<IXi^2>\;, \quad&\quad
L\,\<Xi4>&=\<IXi2> + \<Xi22>\;,\\
L \<Xi4c> &= \<IXi^2> + 2\, \<Xi22>\;, \quad&\quad
L\<Xi2X>&= \X_1\;,  \quad&\quad L\<XXi2>&= \X_1\;.
\end{equs}
(Recall that $\CI(\1) = 0$, so there is no term $\<XiI>$ appearing in $L\<Xi3>$ and
similarly for the other terms.)
We furthermore have
\begin{equ}
L\1 = L\,\<Xi> = L\, \<XiX> = L\<Xi22> = L\<IXi^2> = L \<IXi2> = L\<IXi> = 0\;.
\end{equ}
In particular, this shows immediately that $L^2 \tau = 0$ 
for all $\tau \in \CW_0$ and $L$ and the $L^{(i)}$ all commute, so that
one simply has $M = I - cL - c^{(1)} L^{(1)} - c^{(2)} L^{(2)}$. 
The main result of this section is the following.

\begin{proposition}
With $M$ defined as above, one has $M \in \RR$.
\end{proposition}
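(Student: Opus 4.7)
The plan is to establish that $M\in\RR$ by verifying the two conditions that this membership entails. First, $M$ must satisfy the structural identities \eref{e:basicM} so that \cite[Prop.~8.36]{Regularity} applies and uniquely determines the associated maps $\hat M$ and $\DeltaM$. Second, we must show that for every $\tau\in\CW_0$ the resulting $\DeltaM\tau$ has the form $\tau\otimes\1+\sum \tau^{(1)}\otimes\tau^{(2)}$ with $|\tau^{(1)}|>|\tau|$, as demanded by Definition~\ref{def:renorm}.

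The first step is essentially a bookkeeping check. Since $L$, $L^{(1)}$, and $L^{(2)}$ all vanish on $\1$ and on $\Xi$, the identities $M\1=\1$ and $M\Xi=\Xi$ are immediate. The polynomial commutation $M(X^k\tau)=X^k M\tau$ need only be verified on the basis elements of $\CW_0$ that actually factor as $X^k\cdot\tau$ with $X^k\neq\1$, namely $X_1=X_1\cdot\1$, $\<XiX>=X_1\cdot\Xi$, and $\<XXi2>=X_1\cdot\<Xi2>$; in each of these cases both sides can be computed directly and match. For the commutation $M\CI(\tau)=\CI(M\tau)$, the constraint $\CI(\tau)\in\CT_0$ restricts $\tau$ to the two cases $\Xi$ and $\<Xi2>$: for $\tau=\Xi$ both sides equal $\<IXi>$, while for $\tau=\<Xi2>$ the identity reduces to $\CI(\<Xi2>)-c\,\CI(\1)=\<IXi2>$, which holds because of the postulate $\CI(\1)=0$.

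The second and harder step is the explicit computation of $\DeltaM\tau$ for each $\tau\in\CW_0$. One proceeds inductively using the three identities \eref{e:propMM}: the cointeraction identity $(I\otimes\CM)(\Delta\otimes I)\DeltaM\tau=(M\otimes\hat M)\Delta\tau$, together with the multiplicativity of $\hat M$ and the first identity $\hat M \CJ_k(\tau)=\CM(\CJ_k\otimes I)\DeltaM\tau$, completely determines $\DeltaM$ starting from the trivial base cases $\DeltaM\1=\1\otimes\1$, $\DeltaM\Xi=\Xi\otimes\1$, $\DeltaM X_i=X_i\otimes\1$. The key structural observation is that each of the three maps $L,L^{(1)},L^{(2)}$ strictly raises homogeneity: $L$ replaces a subsymbol of homogeneity $-1-2\kappa$ (an occurrence of $\<Xi2>$) by $\1$, while $L^{(1)}$ and $L^{(2)}$ turn the symbols $\<Xi4>$ and $\<Xi4e>$ of homogeneity $-4\kappa$ into $\1$. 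Because of this, every contribution to $\DeltaM\tau$ beyond the principal $\tau\otimes\1$ that is produced through the recursion carries a first-slot homogeneity strictly greater than $|\tau|$.

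The main obstacle lies in the concrete case analysis for the symbols of homogeneity $-4\kappa$, namely $\<Xi4>,\<Xi4b>,\<Xi4c>$, and $\<Xi4e>$. For these, $\Delta\tau$ already has many terms and $M\tau$ involves several non-trivial contraction contributions, so one must solve the cointeraction identity for $\DeltaM\tau$ with care and verify that all contributions combine so that no term of first-slot homogeneity $\le|\tau|$ survives besides $\tau\otimes\1$. The specific form of $L$ (erasing each occurrence of $\<Xi2>$ in all possible ways, together with the cancellations forced by $\CI(X^k)=0$) and the choice of the two auxiliary maps $L^{(1)},L^{(2)}$ acting on precisely the exceptional symbols $\<Xi4>$ and $\<Xi4e>$ are designed so that this verification goes through; all other symbols in $\CW_0$ are handled either trivially (for those of nonnegative homogeneity, on which $L$ and the $L^{(i)}$ vanish) or by a short direct computation along the same lines.
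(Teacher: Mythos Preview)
Your outline correctly identifies the two ingredients required (the structural identities \eref{e:basicM}, then the upper-triangular shape of $\DeltaM$), but the actual proof is missing and one inference is not justified.

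First, the central claim that ``because $L,L^{(1)},L^{(2)}$ strictly raise homogeneity, every contribution to $\DeltaM\tau$ beyond $\tau\otimes\1$ carries first-slot homogeneity $>|\tau|$'' does not follow by any general principle you have stated. The map $\DeltaM$ is determined implicitly by the cointeraction identity $(I\otimes\CM)(\Delta\otimes I)\DeltaM\tau=(M\otimes\hat M)\Delta\tau$, and there is no obvious inductive mechanism by which the upper-triangularity of $M$ propagates to $\DeltaM$ through that identity. The paper does not attempt such an abstract argument: it writes down an explicit candidate for $\DeltaM$ (namely $\DeltaM\tau=M\tau\otimes\1$ for all $\tau\in\CW_0$ except $\<Xi4>$ and $\<Xi4c>$, where additional terms of the form $\tfrac{c}{2}X_1^2\Xi\otimes\CJ''(\<IXi>)+cX_0\Xi\otimes\dot\CJ(\<IXi>)$ appear), and then verifies the cointeraction identity symbol by symbol using the explicit expressions for $\Delta\tau$. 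You acknowledge that a ``concrete case analysis'' is needed but do not carry it out; in particular you give no candidate for $\DeltaM$ on the exceptional symbols, and you misidentify which symbols are exceptional (it is $\<Xi4>$ and $\<Xi4c>$, not $\<Xi4e>$).

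Second, your remark that $L^{(1)},L^{(2)}$ are ``designed so that this verification goes through'' is misleading. The paper's proof uses the group property of $\RR$ to treat $\exp(-cL)$ and $\exp(-c^{(i)}L^{(i)})$ separately; the latter two satisfy $\Delta M\tau=(M\otimes I)\Delta\tau$, which immediately gives $\DeltaM\tau=M\tau\otimes\1$ and $\hat M=I$. The maps $L^{(1)},L^{(2)}$ play no role in making the $\exp(-cL)$ computation work; each factor lies in $\RR$ on its own. Splitting into factors via the group property is the main organisational device you are missing.
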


\begin{proof}
Since $\RR$ is a group and the operators $L$, $L^{(1)}$, $L^{(2)}$ commute, 
it suffices to verify this separately
for $\exp(-cL)$ and $\exp(-c L^{(i)})$. We note that if we set $M = \exp(-c L^{(i)})$
for $i \in \{1,2\}$, then $M$ satisfies the identity $\Delta M \tau = (M \otimes I)\Delta \tau$.
As a consequence, it is easy to verify that \eref{e:propMM} holds, provided that we set $\DeltaM \tau = M\tau \otimes \one$, and $\hat M \tau = \tau$.
It follows from the ``upper triangular'' structure of $M$ that $\DeltaM$ then satisfy the properties of 
Definition~\ref{def:renorm}.

Verifying these properties for $M = \exp(-c L)$ is less straightforward.
It follows from the recursive definition of $\Delta$ that one has the identities
\begin{equs}
\Delta\<XiX> &=\<XiX>\otimes\one+\<Xi>\otimes X_1,\\
\Delta\<IXi> &=\<IXi>\otimes\one+\1\otimes\J(\<Xi>),\\
\Delta\<Xi2> &=\<Xi2>\otimes\one+\<Xi>\otimes\J(\<Xi>),\\
\Delta\<IXi2> &=\<IXi2>\otimes\one+\<IXi>\otimes\J(\<Xi>)+\1\otimes\J( \<Xi2> ) \\
\Delta\<IXi^2> &=\<IXi^2>\otimes\one+ 2\ \<IXi>\otimes\J(\<Xi>) + \1\otimes\J(\<Xi>)^2,\\
\Delta\<Xi3b> &=\<Xi3b>\otimes\one+ 2\ \<Xi2>\otimes\J(\<Xi>) + \<Xi>\otimes\J(\<Xi>)^2,\\
\Delta\<Xi3>&=\<Xi3>\otimes\one+\<Xi2>\otimes\J(\<Xi>)+\<Xi>\otimes\J(\<Xi2>),\\
\Delta\<Xi4> &=\<Xi4> \otimes\one+\<Xi3>\otimes\J(\<Xi>)+\<Xi2>\otimes\J(\<Xi2>)\\&\quad
                           +\<Xi>\otimes\J(\<Xi3>)+\<XiX>\otimes\J'(\<Xi3>)+\<Xi>\otimes X_1\J'(\<Xi3>),\\
\Delta\<Xi4b> &= \<Xi4b>\otimes\one + 3\ \<Xi3b> \otimes\J(\<Xi>) + 3\ \<Xi2> \otimes\J(\<Xi>)^2 + \<Xi>\otimes\J(\<Xi>)^3 \\
\Delta\<Xi4c> &= \<Xi4c> \otimes \one + 2\ \<Xi3>\otimes \J(\<Xi>) + \<Xi2>\otimes \J(\<Xi>)^2 + \<Xi> \otimes \J(\<Xi3b>)\label{e:Delta}\\
                            &\quad  + \<XiX> \otimes \CJ'(\<Xi3b>) + \<Xi> \otimes X_1 \CJ'(\<Xi3b>) \\
\Delta\<Xi4e> &=\<Xi4e>\otimes\one+ \<Xi3>\otimes\J(\<Xi>)+ \<Xi3b>\otimes\J(\<Xi>)+ \<Xi2>\otimes\J(\<Xi>)^2 \\
&\quad + \<Xi2>\otimes\J(\<Xi2>)+\<Xi>\otimes\J(\<Xi>)\J(\<Xi2>)\\
\Delta \<Xi22> &= \<Xi22>\otimes\one+\<Xi>\otimes\J(\<IXi>)+\<XiX>\otimes\J'(\<IXi>)+\<Xi>\otimes X_1\J'(\<IXi>)\\
             &\quad+\frac{1}{2}\ \symbol{X_1^2\Xi}\otimes\J''(\<IXi>)+\frac{1}{2}\ \<Xi>\otimes X_1^2\J''(\<IXi>)+\<XiX>\otimes X_1\J''(\<IXi>)\\
             &\quad+ \symbol{X_0\Xi}\otimes\dot{\J}(\<IXi>)+\<Xi>\otimes X_0\dot{\J}(\<IXi>),\\
\Delta\<Xi2X>&=\<Xi2X>\otimes\one+\<Xi2>\otimes X_1+\<Xi>\otimes\J(\<XiX>)
                                                +\<XiX>\otimes\J'(\<XiX>)+\<Xi>\otimes X_1\J'(\<XiX>)\;,\\
\Delta\<XXi2>&= \<XXi2>\otimes\one+\<XiX>\otimes\J(\<Xi>) + \<Xi2>\otimes X_1+\<Xi>\otimes X_1\J(\<Xi>)\;.
\end{equs}

For all above symbols, except for $\<Xi4>$ and $\<Xi4c>$, it turns out that we have
\begin{equ}[e:deltaM]
\DeltaM\tau=M\tau\otimes\one\;.
\end{equ}
Moreover, for these two exceptional symbols, one has
\begin{equs}
\DeltaM\<Xi4> &=M \<Xi4>\otimes\one+ \frac{c}{2}\symbol{X_1^2\Xi}\otimes\J''(\<IXi>) 
+c\symbol{X_0\Xi}\otimes\dot{\J}(\<IXi>) \;,\\
\DeltaM\<Xi4c> &=M \<Xi4c>\otimes\one+ c\symbol{X_1^2\Xi}\otimes\J''(\<IXi>)
+2c\symbol{X_0\Xi}\otimes\dot{\J}(\<IXi>) \;.
\end{equs} 
Finally $\hat{M}:\CT_+\to \CT_+$ is such that $\hat{M}(\sigma\overline{\sigma})
=\hat{M}(\sigma)\hat{M}(\overline{\sigma})$ and for all $\tau\in\CW_\ast$,
\begin{equs}
\hat{M}\J_k(\tau)=\J_k(M\tau)\;. 
\end{equs} 

In order to prove that this is the case, it suffices to check that $\DeltaM$
and $\hat M$ defined in this way do indeed satisfy \eref{e:propMM}.
The first two identities are essentially obvious consequences of the definition of $\hat M$. Except for the two cases $\tau = \<Xi4>$ and $\<Xi4c>$,
 the third one follows from the identity
$$\Delta M\tau=(M\otimes\hat{M})\Delta\tau,$$ or equivalently 
$$\Delta(M\tau-\tau)=\big((M\otimes\hat{M})-I\big)\Delta\tau.$$
It is not hard to check that identity for all $\tau\in\CW_0\backslash\{\<Xi4>,\<Xi4c>\}$. Let us give the details of the computation for the case $\tau=\<Xi4e>$.
We need to check that
$$\Delta(M-I)\<Xi4e>=\big((M\otimes\hat{M})-I\big)\Delta\<Xi4e>.$$
It is plain that
$$M\<Xi4e>=\<Xi4e> - c\; \<IXi2> - c\; \<IXi^2>,$$
so that
\begin{equs}
\Delta(M-I)\<Xi4e>&=-c\; \Delta\<IXi2> - c\; \Delta\<IXi^2>\\
&=-c\; \<IXi2>\otimes\one-c\;\<IXi>\otimes\J(\<Xi>)-c\;\1\otimes\J( \<Xi2> )\\&\quad
-c\;\<IXi^2>\otimes\one-2c\; \<IXi>\otimes\J(\<Xi>) -c\; \1\otimes\J(\<Xi>)^2\; .
\end{equs}
On the other hand, using the expression for $\Delta \<X14e>$ given in \eref{e:Delta}, we obtain
\begin{equs}
\big((M\otimes\hat{M})-I\big)\Delta \<Xi4e>&=
-c\;\<IXi2>\otimes\one-c\;\<IXi^2>\otimes\one-3c\;\<IXi>\otimes\J(\<Xi>)\\
&\quad -c\;\1\otimes\J(\<Xi>)^2-c\;\1\otimes\J(\<Xi2>)\;,
\end{equs} 
thus establishing the required identity.

The two cases $\tau = \<Xi4>$ and $\tau = \<Xi4c>$ are similar, so we only give the detailed computations for the case $\tau = \<Xi4>$. We need to check that
$$(I\otimes\M)(\Delta\otimes I)\DeltaM\<Xi4>=(M\otimes\hat M)\Delta\<Xi4>.$$
We first note that
$$\DeltaM\<Xi4>=\<Xi4>\otimes\one-c\; \<IXi2>\otimes\one-c\; \<Xi22>\otimes\one+ \frac{c}{2}\symbol{X_1^2\Xi}\otimes\J''(\<IXi>) 
+c\symbol{X_0\Xi}\otimes\dot{\J}(\<IXi>) \;,$$ 
from which we deduce  that the left hand side reads
\begin{equs}
(I\otimes&\M)(\Delta\otimes I)\DeltaM\<Xi4>\\
&=\<Xi4>\otimes\one+\<Xi3>\otimes\J(\<Xi>)
+ \<Xi2>\otimes\J( \<Xi2>) +\<Xi>\otimes\J(\<Xi3>)+\<XiX>\otimes\J'(\<Xi3>)
+\<Xi>\otimes X_1\J'(\<Xi3>)\\&\quad
-c\Big(\<IXi2>\otimes\one+\<IXi>\otimes\J(\<Xi>)+\1\otimes\J(\<Xi2>)+\<Xi22>\otimes\one
+\<Xi>\otimes\J(\<IXi>)+\<XiX>\otimes\J'(\<IXi>)\\
&\qquad+\<Xi>\otimes X_1\J'(\<IXi>)+\frac{1}{2}\symbol{X_1^2\Xi}\otimes\J''(\<IXi>)+
\symbol{X_1\Xi}\otimes X_1\J''(\<IXi>)\\
&\quad\quad+\frac{1}{2}\symbol{\Xi}\otimes X_1^2\J''(\<IXi>)+\symbol{X_0\Xi}\otimes\dot\J(\<IXi>)+\symbol{\Xi}\otimes X_0\dot\J(\<IXi>)\Big)\\
&\quad+c\Big(\frac{1}{2}\symbol{X_1^2\Xi}\otimes\J''(\<IXi>)+\symbol{X_1\Xi}\otimes X_1\J''(\<IXi>)+
\frac{1}{2}\symbol{\Xi}\otimes X_1^2\J''(\<IXi>)
\\&\qquad + \symbol{X_0\Xi}\otimes\dot\J(\<IXi>)+\symbol{\Xi}\otimes X_0\dot\J(\<IXi>)\Big)\\
&=\<Xi4>\otimes\one+\<Xi3>\otimes\J(\<Xi>)
+ \<Xi2>\otimes\J( \<Xi2>) +\<Xi>\otimes\J(\<Xi3>)+\<XiX>\otimes\J'(\<Xi3>)
+\<Xi>\otimes X_1\J'(\<Xi3>)\\
&\quad -c\Big(\<IXi2>\otimes\one+\<IXi>\otimes\J(\<Xi>)+\1\otimes\J(\<Xi2>)+\<Xi22>\otimes\one
+\<Xi>\otimes\J(\<IXi>)\\
&\qquad +\<XiX>\otimes\J'(\<IXi>) + \<Xi>\otimes X_1\J'(\<IXi>)\Big)
\;,
\end{equs}
while the right hand side reads
\begin{equs}
(M&\otimes\hat M)\Big(\<Xi4> \otimes\one+\<Xi3>\otimes\J(\<Xi>)+\<Xi2>\otimes\J(\<Xi2>) \\
     &\qquad +\<Xi>\otimes\J(\<Xi3>)+\<XiX>\otimes\J'(\<Xi3>)+\<Xi>\otimes X_1\J'(\<Xi3>)\Big)\\
  &=\<Xi4> \otimes\one+\<Xi3>\otimes\J(\<Xi>)+\<Xi2>\otimes\J(\<Xi2>)
   +\<Xi>\otimes\J(\<Xi3>)+\<XiX>\otimes\J'(\<Xi3>)+\<Xi>\otimes X_1\J'(\<Xi3>)\\
&\quad-c\Big(\<IXi2>\otimes\one+ \<Xi22>\otimes\one + \<IXi>\otimes\J(\<Xi>)+\1\otimes\J(\<Xi2>)                      
+\<Xi>\otimes\J(\<IXi>)\\
&\qquad +\<XiX>\otimes\J'(\<IXi>)+\<Xi>\otimes X_1\J'(\<IXi>)\Big)\;.
\end{equs}
The required identity is established. By inspection, we then see that 
$\DeltaM$ is indeed of the form required by Definition~\ref{def:renorm}, thus concluding the proof.
\end{proof}

\subsection{Renormalised solutions}

We now have all the tools required to prove the following result.

\begin{proposition}\label{prop:identify}
Let $H \in \CC^2$, $G \in \CC^5$, 
let $M_\eps = \exp \bigl(- C_\eps L - c^{(1)} L^{(1)} - c^{(2)} L^{(2)}\bigr)$ 
be as in Section~\ref{sec:opL}, let 
$u_0 \in \CC(S^1)$, let $\xi_\eps$ be a smooth function, and denote by $U$ the 
local solution to \eref{e:FP} with model $M_\eps \Psi(\xi_\eps)$ given by Theorem~\ref{theo:gen}.
Then, the function $u$ as in \eref{e:decompU} is the classical solution to the
PDE \eref{e:SPDEapprox}, with $H$ replaced by $\bar H$ as in \eref{e:defbarH}.

If furthermore the PDE \eref{e:SPDEapprox} has classical solutions that remain bounded
up to time $T > 0$, then
the fixed point problem \eref{e:FP} can also be solved uniquely over the same interval $[0,T]$.
\end{proposition}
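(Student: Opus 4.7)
The plan is to exploit that for smooth $\xi_\eps$ both the canonical model $\Psi(\xi_\eps)$ and its renormalisation $M_\eps\Psi(\xi_\eps)$ take values in continuous functions, so that the reconstruction operator reduces to pointwise evaluation $(\CR V)(z) = (\hat\Pi_z V(z))(z)$. Applying $\CR$ to both sides of \eqref{e:FP} and using $\CR\CP = P\star\CR$ together with $\CR(\TT_\gamma Pu_0) = Pu_0$, the task reduces to identifying $\CR(\hat H(U))(z)$ and $\CR(\hat G(U)\sXi)(z)$ explicitly as functions of $u(z)$.

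The first identity is immediate: $\hat H(U)(z) - H(u(z))\1$ takes values in $\bigoplus_{\alpha>0}\CT_\alpha$, and the first bound in \eqref{e:bounds} combined with the continuity of $\hat\Pi_z\tau$ forces $(\hat\Pi_z\tau)(z) = 0$ for every such $\tau$, so $\CR(\hat H(U))(z) = H(u(z))$. For $\CR(\hat G(U)\sXi)(z)$ the same argument reduces the calculation to a finite sum over basis elements $\tau = \sXi\sigma$ of $\CT_\Xi$ with $|\tau|\le 0$, enumerated in \eqref{e:symbols}. The canonical multiplicativity \eqref{e:canonical} gives $(\Pi^\eps_z\tau)(z) = \xi_\eps(z)(\Pi^\eps_z\sigma)(z) = 0$ whenever $\sigma\neq\1$, because $\sigma$ is then either the polynomial $X_1$ or a symbol of strictly positive homogeneity. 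Hence only $\sXi$ contributes via the unrenormalised model, producing $G(u)\xi_\eps$. The full renormalisation contribution $\sum_\tau v_\tau(z)(\hat\Pi^\eps_z\tau - \Pi^\eps_z\tau)(z)$ is then analysed via \eqref{e:defL}: the only $\tau$ for which $M_\eps\tau - \tau$ has a nonzero $\1$-component are $\<Xi2>$, $\<Xi4>$, $\<Xi4e>$, with respective coefficients $-C_\eps$, $-c^{(1)}$, $-c^{(2)}$, while every other image under $L$, $L^{(1)}$ or $L^{(2)}$ is either $X_1$ or a symbol of positive homogeneity such as $\<IXi>$, $\<IXi2>$, $\<IXi^2>$, $\<Xi22>$, all vanishing at the base point.

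It remains to read off the coefficients $v_{\<Xi2>}$, $v_{\<Xi4>}$, $v_{\<Xi4e>}$ in $\hat G(U)\sXi$. Iterating \eqref{e:propFP} together with \eqref{e:FHat} yields, modulo higher-homogeneity terms,
\begin{equation*}
\tilde U(z) = G(u)\<IXi> + G'(u)G(u)\<IXi2> + G'(u)^2 G(u)\<IXi3> + \cdots\;,
\end{equation*}
the last coefficient being reached through the chain $\<Xi2>\to\<IXi2>\to\<Xi3>\to\<IXi3>$. Direct expansion of $\hat G(U)\sXi$ via \eqref{e:FHat} then gives $v_{\<Xi2>} = G'(u)G(u)$, $v_{\<Xi4>} = G'(u)^3 G(u)$, and $v_{\<Xi4e>} = G''(u)G'(u)G(u)^2$, the combinatorial factor $2$ arising from expanding $\tilde U^2$ cancelling the $1/2$ in the Taylor coefficient of $G$. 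Combining everything, $u = \CR U$ satisfies the mild formulation of \eqref{e:SPDEapprox} with $H$ replaced by $\bar H$; since $\xi_\eps$ is smooth and $H\in\CC^2$, $G\in\CC^5$, Schauder theory upgrades this to a classical solution.

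For the second assertion, given a classical solution of the modified PDE bounded on $[0,T]$, its Taylor-like lift via \eqref{e:propFP} provides a candidate fixed point of Theorem~\ref{theo:gen}, whose local lifetime depends only on $\|u\|_{L^\infty}$ and on the norm of the model, both finite throughout $[0,T]$; a standard restart-and-glue argument then extends $U$ over the whole interval, uniqueness being provided by Theorem~\ref{theo:gen}. The main obstacle throughout is the bookkeeping in the second step above: one must systematically verify, for every symbol of $\CT_\Xi$ appearing in \eqref{e:symbols}, that exactly $\<Xi2>$, $\<Xi4>$, $\<Xi4e>$ generate a correction, and in particular rule out a spurious $G'''$-term from $\<Xi4b>$. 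The latter is algebraically forced by $L\<Xi4b> = 3\<IXi^2>$ lying in strictly positive homogeneity, so that the renormalisation $M_\eps$ chosen here automatically eliminates the third-derivative contribution.
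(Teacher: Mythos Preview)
Your proposal is correct and follows essentially the same approach as the paper. The paper also applies $\CR$ to \eqref{e:FP}, writes out the expansion of $\hat G(U)\sXi$ in terms of the basis symbols of negative homogeneity, and then uses the identity $(\hat\Pi^{(\eps)}_z\tau)(z) = (\Pi^{(\eps)}_z M_\eps\tau)(z)$ together with \eqref{e:defL} to see that only $\one$, $\<Xi>$, $\<Xi2>$, $\<Xi4>$, $\<Xi4e>$ give nonzero contributions at the base point, yielding precisely the coefficients you computed; for the second statement the paper simply cites \cite[Prop.~7.11]{Regularity}, which is the restart-and-glue argument you sketch.
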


\begin{proof}
Let $U(t,x)$ denote the local solution to \eref{e:FP} with model $(\hat \Pi^{(\eps)}, \hat F^{(\eps)}) \eqdef M_\eps \Psi(\xi_\eps)$.
Applying the reconstruction operator $\CR_\eps$ associated to this model
to both sides of \eref{e:FP}, we obtain the identity
\begin{equ}[e:general]
u = P\star \bigl(\bigl(H(u) + \CR_\eps(\hat G(U)\sXi)\bigr)\one_{t > 0}\bigr) + Pu_0\;,
\end{equ}
where we used the fact that $\CR_\eps\bigl(\hat H(U)\bigr) = H(u)$, as well as the defining properties
of the operator $\CP$.
While it is also the case that $\CR_\eps \sXi = \xi_\eps$, it is \textit{not} the 
case in general that $\CR_\eps\bigl(\hat G(U)\sXi\bigr) = G(u)\xi_\eps$.

It then follows from \eref{e:propFP} and \eref{e:FHat} that if we consider it as an element of $\CD^{\gamma,\eta}$ with $\gamma$
greater than, but sufficiently close to, ${3\over 2}$, then $U$ is of the form
\begin{equs}
U&=u\,\1+G(u)\,\<IXi>+ G'(u)G(u)\,\<IXi2>  + u'\,\symbol{X_1} \label{e:formU}\\
&\quad+ G'(u)^2 G(u)\,\<IXi3>   + {1\over 2} G''(u) G^2(u) \<Xi4d> + G'(u)u'\,\<IXiX>\;,
\end{equs}
for some functions $u$ and $u'$. In particular, as an element of $\CD^{\gamma'}$ for
$\gamma' > 0$ sufficiently close to $0$, we have the identity
\begin{equs}
\hat G(U)\sXi &= G(u)\,\<Xi>+ G'(u)G(u)\,\<Xi2> + G'(u)^2 G(u)\,\<Xi3> + G'(u)u'\,\<XiX> \\
&\quad  + {1\over 2} G''(u) G^2(u)\, \<Xi3b> + {1\over 6} G'''(u) G^3(u) \,\<Xi4b>
+ G'(u)^3 G(u)\, \<Xi4> \label{e:RHS}\\
&\quad + {1\over 2} G''(u) G'(u) G^2(u)\, \<Xi4c> + G''(u) G'(u) G^2(u)\,\<Xi4e>\\
&\quad + G'(u)^2 u'\,\<Xi2X> + G''(u) G(u) u'\,\<XXi2>\;.
\end{equs}
At this stage, we note that if we write $(\Pi^{(\eps)}, \Gamma^{(\eps)}) = \Psi(\xi_\eps)$,
then we have the identity
\begin{equ}
\bigl(\hat \Pi^{(\eps)}_{z} \tau\bigr)(z) = \bigl(\Pi^{(\eps)}_{z} M_\eps \tau\bigr)(z)\;,
\end{equ}
with $M_\eps$ as in the statement.
Combining this with \eref{e:defL}, we see that this expression is non-zero
for the symbols $\1$, $\<Xi>$, $\<Xi2>$, $\<Xi4>$ and $\<Xi4e>$, where one has
\begin{equs}[0]
\bigl(\hat \Pi^{(\eps)}_{z} \1\bigr)(z) = 1\;,\quad
\bigl(\hat \Pi^{(\eps)}_{z} \<Xi>\bigr)(z) = \xi_\eps(z)\;,\quad
\bigl(\hat \Pi^{(\eps)}_{z} \<Xi2>\bigr)(z) = -C_\eps\;,\\
\bigl(\hat \Pi^{(\eps)}_{z} \<Xi4>\bigr)(z) = -c^{(1)}\;,\quad
\bigl(\hat \Pi^{(\eps)}_{z} \<Xi4e>\bigr)(z) = -c^{(2)}\;.
\end{equs}
The third identity follows from the fact that $M_\eps \<Xi2> = \<Xi2> - C_\eps \1$ and
\begin{equ}
\Pi^{(\eps)}_{z} \<Xi2> = \Pi^{(\eps)}_{z} \<IXi> \,\cdot\, \Pi^{(\eps)}_{z} \<Xi>\;,
\end{equ}
by the definition \eref{e:canonical} of the canonical lift, noting that 
the first factor vanishes at the point $z$ because $|\<IXi>| > 0$. The last two identities 
hold for similar reasons.

Combining this with \eref{e:RHS} and the expression \eref{e:defR} for the
reconstruction operator, it follows that one has the identity
\begin{equs}
\CR_{\eps}\bigl(\hat G(U)\sXi\bigr)(z) &= G(u(z))\xi_\eps(z) - C_\eps G'(u(z))G(u(z)) \\
&\qquad - c^{(1)} G'(u(z))^3 G(u(z)) - c^{(2)} G''(u(z))G'(u(z))G(u(z))^2 \;.
\end{equs}
The first claim now follows by combining this with \eref{e:general}.

Regarding the possibility to solve \eref{e:FP} up to the classical blow-up time of the
corresponding PDE, this was shown in \cite[Prop.~7.11]{Regularity}.
\end{proof}

\subsection{Main results}

Given a cylindrical Wiener process $W$, we define $\xi_\eps$ as in \eref{e:defxieps},
we set $C_\eps = \eps^{-1} c_\rho$ with $c_\rho$ given in \eref{e:crhofirst}, and we
define $c_\rho^{(1)}$ and $c_\rho^{(2)}$ as in Section~\ref{sec:constants}.
We also define $(\Pi^\eps, F^\eps) = \Psi(\xi_\eps)$, the canonical lift of $\xi_\eps$
to the regularity structure $\CT$. As before, we write $M_\eps = \exp(-C_\eps L - c^{(1)}_\rho L^{(1)}- c^{(2)}_\rho L^{(2)})$
and we denote by $(\hat \Pi^\eps, \hat F^\eps)$ the renormalised model obtained
from $(\Pi^\eps, F^\eps)$ by the action of $M_\eps$ given in \eref{e:renorm}.
With this notation, our main convergence result at the level of models
is the following.

\begin{theorem}\label{theo:convModel}
Let $(\hat \Pi^\eps,\hat F^\eps)$ be the renormalised model described above
with $\xi_\eps$ as in \eref{e:defxieps},
$C_\eps = \eps^{-1}c_\rho$, $c^{(1)} = c^{(1)}_\rho$, and $c^{(2)} = c^{(2)}_\rho$ as defined
in Section~\ref{sec:constants}. Then, there exists a
random model $(\hat \Pi, \hat F)$ and a constant $C$ such that
\begin{equ}[e:wantedBoundModel]
\E \|\hat \Pi^\eps;\hat \Pi\| \le C \eps^{\kappa/2}\;,
\end{equ}
for every underlying compact space-time domain. 

Furthermore, for every $\tau \in \CU$ and every $(t,x)$, the process 
$s \mapsto \bigl(\hat \Pi_{(t,x)}\tau\bigr)(s,\cdot)$ is $\CF_s$-adapted for $s > t$
and, for every smooth test function $\phi$ supported in the future $\{(s,y)\,:\, s > t\}$,
 one has the identity
\begin{equ}[e:Itointegral]
\bigl(\hat \Pi_{(t,x)}\sXi\tau\bigr)(\phi) = \int_t^\infty \scal{\bigl(\hat \Pi_{(t,x)} \tau\bigr)(s,\cdot) \phi(s,\cdot), dW(s)}\;,
\end{equ}
where the integral on the right is the It\^o integral. We call $(\hat \Pi, \hat F)$ the It\^o model.
\end{theorem}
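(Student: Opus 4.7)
The plan is to combine the Wiener chaos analysis used in \cite[Sec.~10]{Regularity} with the specific structure of our renormalisation map $M_\eps$, and then to identify the limit via an explicit It\^o integral representation on the "positive homogeneity" sector. By the general extension theorem \cite[Thm.~5.14]{Regularity} and the multiplicativity of $\hat{f}_z$ together with the admissibility identities \eref{e:admissible}, it is enough to control $\hat\Pi^\eps_z\tau$ for every $\tau\in\CW_0$ of strictly negative homogeneity (the list is given in \eref{e:symbols}). For each such $\tau$, $\hat\Pi^\eps_z\tau$ lives in a finite sum of inhomogeneous Wiener chaoses of order at most the number of $\Xi$'s appearing in $\tau$, and can be written explicitly as a sum of multiple stochastic integrals against kernels built by tree-contraction from $K$, $P$, and $\rho^{(2)}_\eps$. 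By the equivalence of moments on any finite Wiener chaos, it suffices to prove bounds on the second moment of $(\hat\Pi^\eps_z\tau)(\phi^\lambda_z)$ and of its difference $(\hat\Pi^\eps_z\tau-\hat\Pi_z\tau)(\phi^\lambda_z)$, uniform in $\phi\in\CB$, and then apply the Kolmogorov-type criterion \cite[Thm.~10.7]{Regularity} in the scaled distance \eref{e:distPi}.

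First I would treat the "convergent" symbols $\{\<Xi>,\<Xi3>,\<Xi3b>,\<XiX>,\<Xi2X>,\<XXi2>\}$, for which no renormalisation is needed: in each case the relevant kernels integrate absolutely at small and large scales (using the argument already used for the $c_\rho^{(i,1)}$ in Section~\ref{sec:convConst}), so one obtains $\E|(\Pi^\eps_z\tau-\Pi_z\tau)(\phi^\lambda_z)|^2\lesssim \eps^{2\theta}\lambda^{2|\tau|}$ for some $\theta>\kappa/2$ by comparing the graphs defined by $\rho^{(2)}_\eps$ with those defined by the delta distribution. Next come the renormalised symbols $\{\<Xi2>,\<Xi4>,\<Xi4c>,\<Xi4e>\}$; here the action of $M_\eps$ is designed precisely so that, in each Wiener chaos, the kernel is replaced by a "Wick-renormalised" one obtained by subtracting a Dirac mass with weight equal to the diverging integral. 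For $\<Xi2>$ this is $C_\eps = \eps^{-1}c_\rho$; for $\<Xi4>$ and $\<Xi4e>$ the constants $c_\rho^{(1)}$, $c_\rho^{(2)}$ of \eref{e:defcrho} are exactly the integrals of the $0$-th chaos contractions of the corresponding trees, as one verifies by writing out the pairings. The remaining symbols $\{\<Xi4b>,\<Xi22>\}$ of negative homogeneity contain no divergent sub-graph and are therefore handled as in the convergent case. All of these pointwise bounds are carried out via the general weighted-graph estimates, so I would invoke \cite[Thm.~10.7, Thm.~10.22]{Regularity} (or the analogue adapted to our trees) to conclude \eref{e:wantedBoundModel}.

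To identify the limit as the It\^o model, I would proceed by induction on $|\tau|$ along the set $\CU$. For $\tau\in\CU$, continuity of $(\Pi,U)\mapsto \Pi U$ together with the admissibility relation \eref{e:admissible3} yields that $(s,y)\mapsto (\hat\Pi_{(t,x)}\tau)(s,y)$ is a continuous function which, by non-anticipativity of $K$ (property (2) of the kernel) and induction, is $\CF_{s\vee t}$-measurable. Therefore, for $\phi$ supported in $\{s>t\}$, the right-hand side of \eref{e:Itointegral} makes sense as an It\^o integral. To prove that it equals $(\hat\Pi_{(t,x)}\sXi\tau)(\phi)$, define $\tilde\Pi^\eps_{(t,x)}\sXi\tau$ by the right-hand side of \eref{e:Itointegral} with $\hat\Pi$ replaced by $\hat\Pi^\eps$ and $dW$ by $\xi_\eps\,ds\,dy$; then one shows using the same chaos computation that $\tilde\Pi^\eps_{(t,x)}\sXi\tau-\hat\Pi^\eps_{(t,x)}\sXi\tau$ is a sum of lower-order chaoses whose kernels are exactly the "contracted" ones, and that our choice of $C_\eps,c_\rho^{(1)},c_\rho^{(2)}$ makes the "past part" of these terms vanish in the limit. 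Passing to the limit using the first part of the theorem then gives \eref{e:Itointegral}.

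The main obstacle is this final identification: one has to verify for each $\tau\in\CU$ with $|\tau|<\zeta$ that the pre-limit discrepancy $\tilde\Pi^\eps_{(t,x)}\sXi\tau - \hat\Pi^\eps_{(t,x)}\sXi\tau$ is, modulo terms tending to zero, a linear combination of precisely those contractions cancelled by $M_\eps$, when tested against functions supported in $\{s>t\}$. This is delicate because contractions internal to $\tau$ between two copies of $\Xi$ separated by $\CI$-integrations give rise to the very symbols $\<Xi2>$, $\<Xi4>$ and $\<Xi4e>$ whose constants appear in $M_\eps$; miraculously, the symmetry of $\rho$ ensures that no third-order correction (involving $G'''$) survives, which is why $\CC^5$ suffices (cf.\ Remark on the regularity of $G$). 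Once this combinatorial matching is done, the adaptedness and the It\^o formula both drop out, completing the proof.
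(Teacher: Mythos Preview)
Your overall architecture matches the paper's: reduce to second-moment bounds on $(\hat\Pi^\eps_0\tau)(\phi_\lambda)$ for each $\tau$ of negative homogeneity via Wiener chaos, invoke \cite[Thm.~10.7]{Regularity}, and identify the limit by showing that the extra contraction terms vanish for $\phi$ supported in the future. However, your classification of which symbols require renormalisation is wrong, and this is not cosmetic. The map $L$ in $M_\eps$ acts nontrivially on \emph{every} symbol containing $\<Xi2>$ as a subtree: by \eref{e:defL} one has $L\<Xi3> = \<IXi>$, $L\<Xi3b> = 2\,\<IXi>$, $L\<Xi2X> = L\<XXi2> = \X_1$, $L\<Xi4b> = 3\,\<IXi^2>$, $L\<Xi4c> = \<IXi^2> + 2\,\<Xi22>$. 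Without these subtractions the lower-chaos contractions of, say, $\Pi^\eps_0\<Xi3>$ contain a term of order $C_\eps \sim \eps^{-1}$ which diverges, and the same happens for every symbol you put in the ``convergent'' list except $\<Xi>$ and $\<XiX>$. Conversely, $\<Xi4c>$ is \emph{not} hit by $L^{(1)}$ or $L^{(2)}$, only by $L$. The bookkeeping matters because it is the renormalised kernel $\Ren Q_\eps = K\rho^{(2)}_\eps - C_\eps\delta$ that appears in the graph bounds and makes them converge.

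A second gap concerns $\<Xi4>$ and $\<Xi4e>$: even after renormalisation, certain individual terms in their chaos decompositions \emph{fail} the integrability hypotheses of the graph-bound theorem. The paper has to pair these terms and rewrite them using the recentring identity hidden in the barred arrows to exhibit an algebraic cancellation (see \eref{e:termsFirstCancel}, \eref{e:termsSecondCancel}, \eref{e:someMessyStuff}); only the combined expressions satisfy Assumption~\ref{ass:graph}. Your outline does not anticipate this. For the It\^o identity \eref{e:Itointegral}, the paper's argument is also more direct than your $\tilde\Pi^\eps$ construction: it writes $\hat\Pi_0\sXi\tau$ explicitly in the limit, compares term by term with the Wick product $\hat\Pi_0\tau \diamond \xi$, and observes that every discrepancy term contains a closed directed loop of non-anticipative kernels (the argument around \eref{e:explodeloop}), which vanishes identically when $\phi$ is supported in the future. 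Your route via a smoothed integral and ``passing to the limit'' would still have to confront this combinatorics, and as stated it conflates the model level with the solution level (the remark about $G'''$ is irrelevant to the model convergence).
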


\begin{remark}
In this statement, we did again denote by $\CF$ the filtration generated by the 
underlying cylindrical Wiener process $W$. Note also that \eqref{e:wantedProp}
does \textit{not} hold in general if $\phi$ is not
supported in the future. In fact, the statement may not even make any sense in this case
since in general $\bigl(\Pi_{(t,x)} \tau\bigr)(s,\cdot)$ is not adapted to $\CF$ for
$s < t$. One may wonder if in this case \eqref{e:wantedProp} still holds, but with the integral
on the right interpreted as a Skorokhod integral in situations where the integrand
is anticipative. Again, this is \textit{not} the case in general.
\end{remark}

\begin{proof}
The proof is the content of Section~\ref{sec:convergence} below.
Instead of \eqref{e:wantedBoundModel}, we will however only show the seemingly weaker statement that
\begin{equ}[e:wantedBound]
\E \bigl|\bigl(\hat \Pi_0^\eps \tau - \hat \Pi_0^{\bar \eps} \tau\bigr)(\phi_\lambda)\bigr|^2 
\le C \eps^\kappa \lambda^{2|\tau|+\kappa}\;,\qquad 
\E \bigl|\bigl(\hat \Pi_0^\eps \tau\bigr)(\phi_\lambda)\bigr|^2 
\le C \lambda^{2|\tau|+\kappa}\;,
\end{equ}
uniformly over $\lambda \in (0,1]$, over smooth test functions $\phi$ supported in the ball of radius
$1$ and with $\CC^2$ norm bounded by $1$, over all $\tau \in \CW$ with $|\tau| < 0$, and over
all $0 < \bar \eps \le \eps \le 1$. Here, we wrote $\phi_\lambda$ as a shorthand for the function
\begin{equ}
\phi_\lambda(t,x) = \lambda^{-3} \phi(\lambda^{-2}t, \lambda^{-1} x)\;.
\end{equ}

Taking this bound for granted, 
the existence of a random model $(\hat \Pi, \hat F)$ satisfying the 
required bound \eref{e:wantedBoundModel} is then an immediate consequence of 
\cite[Thm~10.7]{Regularity}.
\end{proof}

We now have all the tools in place to formulate the main convergence result of this article.

\begin{theorem}\label{theo:mainFull}
Consider the setting of Theorem~\ref{theo:convModel}. Fix $\gamma \in ({3\over 2} + \kappa,\zeta]$
and let $H \in \CC^{\chi_1}$ and $G \in \CC^{\chi_2}$ with 
\begin{equ}[e:assChi]
\chi_1 > \Bigl(1 + {2\gamma-4 \over 1-2\kappa}\Bigr) \vee 2\;,\qquad \chi_2 > 
{4 \vee 2\gamma \over 1-2\kappa} \vee 2\;.
\end{equ}
Assume that the derivative of both $H$ and $G$ is uniformly bounded and let $u_0 \in \CC(S^1)$ and $T> 0$. 
For any $\eps > 0$, denote by $U^\eps$ the maximal (up to time $T$)
solution to the fixed point problem in $\CD^{\gamma,0}$
constructed in Theorem~\ref{theo:gen} with respect to the
model $(\hat \Pi^\eps, \hat F^\eps)$. Denote by $U$ the same solution, but with respect to the It\^o model
$(\hat \Pi, \hat F)$.

Then, the maximal existence time for $U$ is almost surely equal to $T$ and $\CR U$ coincides
almost surely with the It\^o solution to \eref{e:SPDE}.
Recalling the distance $\|U;U^\eps\|_{\gamma,\eta}$ stated just after \eref{e:distUUbar},
one then has for every $\theta < \kappa / 2$ the estimate
\begin{equ}
\lim_{\eps \to 0} \P \bigl(\| U; U^\eps\|_{\gamma,0} > \eps^{\theta}\bigr) = 0\;.
\end{equ}
Here, the distance $\|\cdot;\cdot\|_{\gamma,0}$ is taken over the domain
$[0,T] \times S^1$. In particular, it implies that with high probability, the maximal
existence time for $U^\eps$ is at least $T$.
\end{theorem}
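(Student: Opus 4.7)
The plan is to combine three ingredients: the convergence of renormalised models from Theorem~\ref{theo:convModel}, the Lipschitz dependence of abstract solutions on the model provided by Theorem~\ref{theo:gen} (strengthened using Proposition~\ref{prop:diffNonlin}), and the identification of $\CR_\eps U^\eps$ with the classical solution to \eqref{e:SPDEapprox} from Proposition~\ref{prop:identify}. The control on the fixed point is then transported to time $T$ by a deterministic bootstrap.

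First, by Theorem~\ref{theo:convModel} and Chebyshev's inequality, for every $\theta<\kappa/2$ and $\delta>0$ there exists an event $\Omega_\delta$ of probability at least $1-\delta$ on which $\|\hat\Pi^\eps;\hat\Pi\|\le\eps^\theta$ and the model norms of $(\hat\Pi^\eps,\hat F^\eps)$ and $(\hat\Pi,\hat F)$ are bounded uniformly in $\eps$. Working on $\Omega_\delta$, Theorem~\ref{theo:gen} combined with Proposition~\ref{prop:diffNonlin} (whose hypotheses hold thanks to the regularity assumption \eqref{e:assChi}) yields a deterministic time $T_0>0$ depending only on $\|u_0\|_\infty$ and the uniform model bound, such that both $U^\eps$ and $U$ solve \eqref{e:FP} uniquely in $\CD^{\gamma,0}([0,T_0]\times S^1)$ and
\begin{equation*}
\|U^\eps;U\|_{\gamma,0;[0,T_0]} \lesssim \|\hat\Pi^\eps;\hat\Pi\| \lesssim \eps^\theta.
\end{equation*}
This gives the desired convergence on the short interval $[0,T_0]$.

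Next, I identify $u:=\CR U$ with the It\^o solution to \eqref{e:SPDE}. Applying $\CR$ to \eqref{e:FP} and using $\CR\CP V=P\star\CR V$, one obtains
\begin{equation*}
u(t,x) = (Pu_0)(t,x) + \int_0^t\!\!\int_{S^1} P(t-s,x-y) H(u(s,y))\,dy\,ds + V(t,x),
\end{equation*}
where $V:=P\star(\CR(\hat G(U)\sXi)\one_{t>0})$. The key claim, and the technical heart of the argument, is that $V$ coincides with the It\^o stochastic integral of $P(t-s,x-y)G(u(s,y))$ against $W$. This relies on the adaptedness/It\^o property \eqref{e:Itointegral} of the limiting model applied to the expansion \eqref{e:formU}--\eqref{e:RHS} of $\hat G(U)\sXi$: since $\CR(\hat G(U)\sXi)$ is characterised by local agreement with $\hat\Pi_{(t_0,x_0)}(\hat G(U(t_0,x_0))\sXi)$, and the latter, tested against any function supported in the future of $(t_0,x_0)$, is an It\^o integral with integrand $G(u)$ up to higher-order symbols that disappear under reconstruction (because their homogeneity is strictly positive), a standard partition of unity argument promotes this local representation to the global formula. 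Granted this, $u$ satisfies the mild formulation of \eqref{e:SPDE} in the It\^o sense and therefore coincides almost surely with the It\^o solution on $[0,T_0]$. For each $\eps>0$, Proposition~\ref{prop:identify} identifies $\CR_\eps U^\eps$ as the classical solution of \eqref{e:SPDEapprox} with $\bar H$ in place of $H$; since $H'$ and $G'$ are bounded this classical solution is global, and Proposition~\ref{prop:identify} then guarantees that $U^\eps$ is itself defined on all of $[0,T]$.

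Finally, a deterministic bootstrap extends the convergence to the full interval $[0,T]$. On an event of probability at least $1-2\delta$, the continuous It\^o solution $u$ satisfies $\sup_{[0,T]\times S^1}|u|\le M$ for some (random) $M$. We cover $[0,T]$ by $\lceil T/T_0\rceil$ intervals $[t_k,t_{k+1}]$ of uniform length $T_0$, now chosen as a function of $M$ and of the uniform model bound on $\Omega_\delta$. At each step, the local estimate is applied on $[t_k,t_{k+1}]$ with $U^\eps(t_k,\cdot)$ and $U(t_k,\cdot)$ playing the role of initial data (together with the identification of $\CR U$ with a continuous function at positive times, which controls $\|U(t_k,\cdot)\|_\infty$): closeness of the fixed points propagates with a bounded multiplicative constant, so after finitely many iterations $\|U^\eps;U\|_{\gamma,0;[0,T]}\lesssim\eps^\theta$ on $\Omega_\delta$. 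Since $\delta$ is arbitrary this gives the required convergence in probability; in particular the maximal existence time of $U^\eps$ is at least $T$ with high probability, while that of $U$ is $T$ almost surely because $\CR U$ is the globally defined It\^o solution. The main obstacle is the identification of $V$ as an It\^o integral: the reconstruction operator is only pinned down by local compatibility with $\hat\Pi_z$, so extracting a global It\^o formula, rather than a Stratonovich-type object, relies crucially on the adaptedness built into \eqref{e:Itointegral} and on the renormalisation carried out in Section~\ref{sec:renorm}, which ensures that none of the symbols of negative homogeneity contribute spurious drifts in the reconstruction limit.
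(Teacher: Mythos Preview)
Your plan is essentially the paper's own argument: combine model convergence (Theorem~\ref{theo:convModel}) with the local Lipschitz dependence of the solution map (Theorem~\ref{theo:gen}, with Proposition~\ref{prop:diffNonlin} supplying the strong Lipschitz property under \eqref{e:assChi}) and the identification of $\CR U$ with the It\^o solution; your bootstrap is what \cite[Prop.~7.11]{Regularity} packages. Two points, however, are not right as stated.

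For the identification, a ``standard partition of unity argument'' does not work: a generic partition does not produce test functions supported in the \emph{future} of their base points, so \eqref{e:Itointegral} cannot be invoked termwise and you cannot conclude that the reconstructed object is an It\^o (rather than Skorokhod-type) integral. The paper's Theorem~\ref{theo:ident} (feeding into Corollary~\ref{cor:ItoSol}) instead approximates $\CR(V\sXi)$ via a multiresolution analysis whose scaling function is supported in $[0,K]$, so that each $\phi^n_z$ lies in the future of $z$; then \eqref{e:Itointegral} applies, and the resulting It\^o integrand $\bigl(\hat\Pi_z V(z)\bigr)(s,\cdot)$ tends to $v$ because $\bigl(\hat\Pi_z\tau\bigr)(z)=0$ for $\tau\in\CU\setminus\{\one\}$. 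Your intuition that only the $\one$-component of $\hat G(U)$ survives is correct, but the one-sided support is the non-trivial ingredient you are missing.

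Your assertion that the classical solution to \eqref{e:SPDEapprox} with $\bar H$ is global ``since $H'$ and $G'$ are bounded'' is not justified (the correction term $G''G'G^2$ in $\bar H$ may grow super-linearly, as $G''$ is not assumed bounded) and is in any case unnecessary: your own bootstrap already yields existence of $U^\eps$ up to $T$ on the good event. The paper argues in the opposite order: once $\CR U$ is identified with the global It\^o solution via Corollary~\ref{cor:ItoSol} and \cite{DPZ}, \cite[Prop.~7.11]{Regularity} gives that $U$ is global; existence of $U^\eps$ up to $T$ then follows, with high probability only, from $U^\eps\to U$.
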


\begin{proof}
The proof of the theorem is essentially just a collection of the results of this article.
For the It\^o model 
$(\hat \Pi, \hat F)$, solutions are shown in Corollary~\ref{cor:ItoSol} below 
to coincide with the solutions to \eref{e:SPDE}.
Since these are known to be global almost surely \cite{DPZ}, it follows from
\cite[Prop.~7.11]{Regularity} that the solutions to \eref{e:FP} are also global.

The convergence in probability of $U^\eps$ to $U$
is then an immediate consequence of Theorem~\ref{theo:convModel}, combined with 
the local Lipschitz continuity of the solution map given in Theorem~\ref{theo:gen}.
In particular, it immediately follows that the existence time for $U^\eps$ is at least $T$ with
a probability converging to $1$ as $\eps \to 0$.

The assumptions \eref{e:assChi} on the regularity of $H$ and $G$ are precisely what is 
needed in Proposition~\ref{prop:diffNonlin} to ensure that $\hat H$ is locally Lipschitz continuous
from $\CD^{\gamma}$ into $\CD^{\gamma - 2 + \kappa}$, and that $\hat G$
is locally Lipschitz continuous  from $\CD^{\gamma}$ into $\CD^{\gamma - {1\over 2} + \kappa}$.
(Using the fact that the element of lowest non-zero homogeneity appearing in the description of $U$
is $\<IXi>$ with homogeneity ${1\over 2} - \kappa$.)
\end{proof}

It is now very easy to prove the various results stated in the introduction.

\begin{proof}[of Theorem~\ref{theo:main}]
This is now an immediate corollary of Theorem~\ref{theo:mainFull}, noting that if we
write $u = \CR U$ and $u_\eps = \CR U^\eps$ then, for $\alpha \le {1\over 2} -\kappa$ and $t>0$
both the $\CC^{\alpha,\alpha/2}$ norm over $(t,T]\times S^1$ and the supremum norm
over $[0,T] \times S^1$ are controlled by $\| U; U^\eps\|_{\gamma,0}$.
Furthermore, Proposition~\ref{prop:identify} identifies $u^\eps$ with the classical solution to
\eref{e:SPDEapprox} with modified drift $\bar H$, while 
 Corollary~\ref{cor:ItoSol} identifies $u$ with the It\^o solution to \eref{e:SPDE}.
\end{proof}

\begin{proof}[of Corollary~\ref{cor:flow}]
This follows immediately from the local Lipschitz continuity of the solution map
of Theorem~\ref{theo:gen}, combined with the identification of solutions given by 
Corollary~\ref{cor:ItoSol}.
%
%
\end{proof}

\begin{proof}[of corollaries~\ref{cor:regular} and \ref{cor:coincide}]
These corollaries are both a consequence of the form of the solution. 
Recall that the solution $U$ to the fixed point problem will necessarily be of the form
\eref{e:formU}. Furthermore, it follows from our definition of the It\^o model 
that the solution $v$ to the linearised equation is given by
\begin{equ}
v(\bar z) - v(z) = \bigl(\Pi_z \<IXi>\bigr)(\bar z) + R_v(z,\bar z)\;,
\end{equ}
where the remainder $R_v$ satisfies the bound
\begin{equ}
R_v(z,\bar z) \lesssim {|x-\bar x| + \sqrt{|t-\bar t|} \over (|t| \wedge |\bar t|)^{1\over 4}}\;,
\end{equ}
uniformly over $z, \bar z \in [0,T] \times S^1$ for any fixed $T>0$.
(Here, we used again the shorthands $z = (t,x)$ and $\bar z = (\bar t, \bar x)$.)
In particular, we have
\begin{equ}
G(u(z))\bigl(v(\bar z) - v(z)\bigr) = G(u(z)) \bigl(\Pi_z \<IXi>\bigr)(\bar z) + \bar R_v(z,\bar z)\;,
\end{equ}
with $\bar R_v$ satisfying
$R_v(z,\bar z) \lesssim |x-\bar x| + \sqrt{|t-\bar t|}$,
uniformly away of the line $\{t = 0\}$.

On the other hand, it follows from \eref{e:formU} and the definition of $\CD^{\gamma,0}$ that
\begin{equ}
u(\bar z) - u(z) = G(u(z)) \bigl(\Pi_z \<IXi>\bigr)(\bar z) + R_u(z,\bar z)\;,
\end{equ} 
for a remainder $R_u$ satisfying the bound $R_u(z,\bar z) \lesssim (|x-\bar x| + \sqrt{|t-\bar t|})^{1-2\kappa}$. This is thanks to the definition of a model, combined with the fact that the term
of lowest homogeneity different from $\1$ and $\<IXi>$ appearing in the description of $U$ 
is $\<IXi2>$, which has homogeneity $1-2\kappa$.
Comparing both expressions, Corollary~\ref{cor:regular} follows. 

To prove Corollary~\ref{cor:coincide}, we first note that, as a consequence of \eref{e:formU},
if $z$ is such that $u(z) = \bar u(z)$, then the coefficients multiplying $\<IXi>$ and $\<IXi2>$ in the
description of $U$ and $\bar U$ necessarily also coincide at the point $z$. 
As a consequence, $\bar u - u$ is differentiable at $z$ (in the spatial direction) 
and, after subtracting the corresponding linear term,
the remainder is given by
\begin{equ}
G'(u(z)) \bigl(u'(z) - \bar u'(z)\bigr)  \bigl(\Pi_z \<IXiX>\bigr)(\bar z)  + R\;,
\end{equ}
where $R$ is bounded by a multiple of $|\bar t - t| + |\bar x - x|^2$. (Again, locally
uniformly away from $\{t=0\}$.) Since $\<IXiX>$ is of homogeneity ${3\over 2} - \kappa$, the 
first claim follows.

In the context of the last statement of Corollary~\ref{cor:coincide}, the point $z$ furthermore has
the property that $u'(z) = \bar u'(z)$. Indeed, if this were not the case then, by the first part, 
$u - \bar u$ would have a non-zero spatial derivative at $z$, which would contradict our assumption on
the behaviour of the solutions near $z$. We then combine this with the fact that, to order $2$, the 
expansion for $U$ is the same as that for $\hat G(U)\Xi$ given in \eref{e:RHS} 
(except for the terms multiplying the ``Taylor polynomials''). In particular, as a consequence of
the identities $u(z) = \bar u(z)$ and $u'(z) = \bar u'(z)$, all of these terms agree at the point $z$.
Furthermore, the terms of lowest
homogeneity larger than $2$ have homogeneity ${5\over 2} - 5\kappa$, from which the bound
given in the statement follows at once. The claim concerning the signs of constants $D_i$ is an 
immediate consequence of the assumption that 
$u(s,y) \ge \bar u(s,y)$ in 
$\{(s,y)\,:\, |x-y| \le \delta \;\&\; s \in (t-\delta,t]\}$.
\end{proof}

\section{Construction of the It\^o model}
\label{sec:convergence}

The aim of this section is to obtain the bound \eref{e:wantedBound} as well as the
identity \eqref{e:Itointegral}. 

\subsection{Estimate of the first term}

In order to motivate the technique of proof, we first
show in detail as an example how one shows convergence 
of $\bigl(\hat \Pi_0^\eps \<Xi2>\bigr)(\phi_\lambda)$ to a limit.

\subsubsection{Graphical notation}

Since the random variables $\bigl(\hat \Pi_0^\eps \tau\bigr)(\phi)$ belong to Wiener chaoses
of finite order, they can be described in terms of their Wiener chaos decomposition.
Similarly to what was already done in \cite{KPZ,Regularity}, 
we use a graphical notation to describe these random variables.
A random variable belonging to the $k$th homogeneous Wiener chaos can
be described by a kernel in $L^2(\R^2)^{\otimes k}$, i.e.\ the
space of square-integrable functions in $k$ space-time variables via the correspondence 
$f \mapsto I_k(f)$ given in \cite[p.~8]{Nualart}.
Such a kernel will always be constructed from elementary kernels by 
multiplication and integration.

By translation invariance, we will only ever need to consider random
variables of the type $\bigl(\Pi_0^\eps \tau\bigr)(\phi_\lambda)$,
where $\phi_\lambda$ denotes a test function that is localised around the
origin. Nodes in our graph will represent variables in $\R^2$, with one
special node \tikz[baseline=-3] \node [root] {}; representing the origin.
The nodes \tikz[baseline=-3] \node [var] {}; represent the arguments of our kernel,
so that a random variable in the $k$th (homogeneous) Wiener chaos is represented by
a graph with exactly $k$ such nodes. The remaining nodes, which we draw as \tikz[baseline=-3] \node [dot] {};, represent dummy variables that are to be integrated out.

Each line then represents a kernel, with 
\tikz[baseline=-0.1cm] \draw[kernel] (0,0) to (1,0);
representing the kernel $K$, 
\tikz[baseline=-0.1cm] \draw[rho] (0,0) to (1,0);
representing the kernel $\rho_\eps$, and
\tikz[baseline=-0.1cm] \draw[testfcn] (1,0) to (0,0);
representing a generic test function $\phi_\lambda$ rescaled to 
scale $\lambda$.

Whenever we draw a barred arrow 
\tikz[baseline=-0.1cm] \draw[kernel1] (0,0) to (1,0);
this represents a factor $K(t-s,y-x) - K(-s,-x)$, where
$(s,x)$ and $(t,y)$ are the coordinates of the starting and end
point respectively. Finally, a double barred arrow 
\tikz[baseline=-0.1cm] \draw[kernel2] (0,0) to (1,0);
represents a factor 
$K(t-s,y-x) - K(-s,-x) - y\, K'(-s,-x)$. 

With these graphical notations at hand, it follows for example from the
recursive definition of $\Pi_0^{(\eps)} \<Xi2>$ combined
with the contraction formula for the Wiener chaos decomposition of a product
(see \cite[Prop.~1.1.2]{Nualart}), that one
has the identity
\begin{equ}[e:PiXiTwo]
\bigl(\Pi_0^{(\eps)} \<Xi2>\bigr)(\phi_\lambda) = \;
\begin{tikzpicture}[scale=0.35,baseline=0.3cm]
	\node at (0,-1)  [root] (root) {};
	\node at (-2,1)  [dot] (left) {};
	\node at (-2,3)  [dot] (left1) {};
	\node at (0,1) [var] (variable1) {};
	\node at (0,3) [var] (variable2) {};
	
	\draw[testfcn] (left) to  (root);
	
	\draw[kernel1] (left1) to (left);
	\draw[rho] (variable2) to (left1); 
	\draw[rho] (variable1) to (left); 
\end{tikzpicture}\;
+ \;
\begin{tikzpicture}[scale=0.35,baseline=0.3cm]
	\node at (0,-1)  [root] (root) {};
	\node at (-2,1)  [dot] (left) {};
	\node at (-2,3)  [dot] (left1) {};
	\node at (0,2) [dot] (variable1) {};
	\node at (0,2) [dot] (variable2) {};
	
	\draw[testfcn] (left) to (root);
	
	\draw[kernel1] (left1) to (left);
	\draw[rho] (variable2) to (left1); 
	\draw[rho] (variable1) to (left); 
\end{tikzpicture}\;.
%
%
%
%
\end{equ}
In other words, it consists of the sum of one term belonging to the second homogeneous
Wiener chaos and one term belonging to the zeroth chaos. (So this is just a real number.)
The first term is described by a kernel $(z,\bar z) \mapsto \CW^{(2;\eps)}(z,\bar z)$ given
by
\begin{equ}
\CW^{(2;\eps)}(z,\bar z) = \int \rho_\eps(z-z_1)\rho_\eps(\bar z-z_2) \bigl(K(z_2 - z_1) - K(-z_1)\bigr) \,\phi_\lambda(z_2)\,dz_1\,dz_2\;,
\end{equ}
while the constant term is given by $\int \CW^{(2;\eps)}(z,z)\,dz$.

The problem with this is that the second term diverges as $\eps \to 0$,
so we would like our renormalisation procedure to cancel this term out.
This is why our renormalisation map $M$ was of the form $M\<Xi2> = \<Xi2> - C_\eps \1$
and it motivates the definition
\begin{equ}
C_\eps = \int K(t,x) \rho^{\star 2}_\eps(t,x)\,dt\,dx = \;
\begin{tikzpicture}[scale=0.35,baseline=0.6cm]
	\node at (-2,1)  [root] (left) {};
	\node at (-2,3)  [dot] (left1) {};
	\node at (0,2) [dot] (variable) {};
	
	\draw[kernel] (left1) to (left);
	\draw[rho] (variable) to (left1); 
	\draw[rho] (variable) to (left); 
\end{tikzpicture}\;.
\end{equ}
For $\eps$ small enough, it follows from the scaling properties of
the heat kernel that this constant is indeed equal to 
$\eps^{-1}c_\rho$ with $c_\rho$ as defined in the introduction and already mentioned several times.
Since one then has $\bigl(\hat \Pi_0^{(\eps)} \<Xi2>\bigr)(\phi_\lambda) = 
\bigl(\Pi_0^{(\eps)} \<Xi2>\bigr)(\phi_\lambda) - C_\eps \int \phi_\lambda$
as a consequence of \eref{e:defL}, the renormalised model is given by
\begin{equ}[e:decompPiXiTwo]
\bigl(\hat \Pi_0^{(\eps)} \<Xi2>\bigr)(\phi_\lambda) = \;
\begin{tikzpicture}[scale=0.35,baseline=0.3cm]
	\node at (0,-1)  [root] (root) {};
	\node at (-2,1)  [dot] (left) {};
	\node at (-2,3)  [dot] (left1) {};
	\node at (0,1) [var] (variable1) {};
	\node at (0,3) [var] (variable2) {};
	
	\draw[testfcn] (left) to  (root);
	
	\draw[kernel1] (left1) to (left);
	\draw[rho] (variable2) to (left1); 
	\draw[rho] (variable1) to (left); 
\end{tikzpicture}\;
 - \;
\begin{tikzpicture}[scale=0.35,baseline=0.3cm]
	\node at (0,-1)  [root] (root) {};
	\node at (-1,1)  [dot] (left) {};
	\node at (0,3)  [dot] (top) {};
	\node at (1,1) [dot] (right) {};
	
	\draw[testfcn] (left) to  (root);
	
	\draw[kernel] (right) to (root);
	\draw[rho] (top) to (right); 
	\draw[rho] (top) to (left); 
\end{tikzpicture}\;.
\end{equ}
The reason why the second term is present in this expression is that
the second term in \eref{e:PiXiTwo} really consists of two terms since,
as mentioned above, the arrow represents a factor $K(t-s,y-x) - K(-s,-x)$.
The renormalisation term only cancels the first of these two terms.

In order to obtain the second bound in \eref{e:wantedBound}, we would now like to show that
\begin{equ}
\E \bigl|\bigl(\hat \Pi_0^{(\eps)} \<Xi2>\bigr)(\phi_\lambda)\bigr|^2 \lesssim \lambda^{2|\<Xi2s>| + \kappa} = \lambda^{-2-3\kappa}\;,
\end{equ}
where we used the fact that $|\<Xi2>| = -1 -2\kappa$.
For this, we use the fact that if $X$ is a random variable belonging to the $k$th homogeneous
Wiener chaos described by a kernel $\CW$ with $k$ variables, then one has the bound $\E X^2 \le k! \|\CW\|^2$, where
$\|\cdot\|$ denotes the $L^2$-norm. The reason why we do not have equality is
that in general equality holds only when $\CW$ is symmetrised under permutations of its
$k$ arguments, which is not the case here. 
Using furthermore the orthogonality of Wiener chaoses, we therefore deduce
from \eref{e:decompPiXiTwo} the bound
\begin{equ}[e:boundsXi2]
\E \bigl|\bigl(\hat \Pi_0^{(\eps)} \<Xi2>\bigr)(\phi_\lambda)\bigr|^2 \le 2\;
\begin{tikzpicture}[scale=0.35,baseline=0.3cm]
	\node at (0,-1)  [root] (root) {};
	\node at (-1.5,1)  [dot] (left) {};
	\node at (-1.5,3)  [dot] (left1) {};
	\node at (1.5,1) [dot] (variable1) {};
	\node at (1.5,3) [dot] (variable2) {};
	
	\draw[testfcn] (left) to  (root);
	\draw[testfcn] (variable1) to  (root);
	
	\draw[kernel1] (left1) to (left);
	\draw[kernel1] (variable2) to (variable1);
	\draw[rho] (variable2) to node[dot] {} (left1); 
	\draw[rho] (variable1) to node[dot] {} (left); 
\end{tikzpicture}\; + \;
\left(
\begin{tikzpicture}[scale=0.35,baseline=0.3cm]
	\node at (0,-1)  [root] (root) {};
	\node at (-1,1)  [dot] (left) {};
	\node at (0,3)  [dot] (top) {};
	\node at (1,1) [dot] (right) {};
	
	\draw[testfcn] (left) to  (root);
	
	\draw[kernel] (right) to (root);
	\draw[rho] (top) to (right); 
	\draw[rho] (top) to (left); 
\end{tikzpicture}
\right)^2\;,
\end{equ}
where we used the same graphical notations as before. Note that this is simply a real number (depending
of course on the scale $\lambda$ at which the test function is localised),
as can be seen from the absence of free variables \tikz[baseline=-3] \node [var] {};.

The second term in \eref{e:boundsXi2} can be bounded with relative ease. Indeed, it suffices to
note that it is equal to 
\begin{equ}
\biggl(\int \phi_\lambda(z) \bigl(\rho_\eps^{\star 2} * K\bigr)(z)\,dz\biggr)^2\;.
\end{equ}
This in turn is bounded by a multiple of $\lambda^{-2}$ (uniformly in $\eps$) as a consequence of 
\cite[Lemma~10.17]{Regularity}, which is precisely the desired result.

To bound the first term in \eref{e:boundsXi2}, one realises that it is not really necessary to keep
track of the precise form of the kernels represented by each edge. The only relevant information
is their singular behaviour near the origin. There are however two additional pieces of information
that we need to keep track of. First, there is the fact that the two arrows really correspond
to a difference between two kernels where one of them is evaluated at the root.
Then, there is the fact that, although the dashed lines correspond to a kernel that has
homogeneity $-3$ (if one wants a bound uniform in $\eps$, then the best one can do is to
bound the function $\rho_\eps(z)$ by $|z|^{-3}$ in parabolic space-time), we know for a fact that
it is integrable even though such a homogeneity could in principle lead to a logarithmic divergence.

\subsubsection{Bounds on labelled graphs}
One might then rewrite the bound \eref{e:boundsXi2} as follows:
\begin{equ}[e:IXi2bnd]
\E \bigl|\bigl(\hat \Pi_0^{(\eps)} \<Xi2>\bigr)(\phi_\lambda)\bigr|^2 \lesssim \;
\begin{tikzpicture}[scale=0.4,baseline=0.3cm]
	\node at (0,-1)  [root] (root) {};
	\node at (-1.5,1)  [dot] (left) {};
	\node at (-1.5,3)  [dot] (left1) {};
	\node at (1.5,1) [dot] (variable1) {};
	\node at (1.5,3) [dot] (variable2) {};
	
	\draw[dist] (left) to (root);
	\draw[dist] (variable1) to (root);
	
	\draw[->,generic] (left1) to  node[labl,pos=0.45] {\tiny 1,1} (left);
	\draw[->,generic] (variable2) to node[labl,pos=0.45] {\tiny 1,1} (variable1);
	\draw[generic] (variable2) to node[labl] {\tiny 3,-1} (left1); 
	\draw[generic] (variable1) to  node[labl] {\tiny 3,-1} (left); 
\end{tikzpicture}
\; + \lambda^{-2}\;.
\end{equ}
Here, each vertex $v$ represents an integration variable $x_v$ as before, except for the 
larger root which represents the origin. Edges $e = (v,\bar v)$ are oriented and decorated 
with a label $(m_e,r_e) \in \R \times \Z$. The orientation of an edge matters only if $r_e > 0$.

If $r_e = 0$, then the corresponding edge represents a factor 
$\hat J_e(x_v, x_{\bar v}) = J_e(x_{\bar v} - x_v)$,
where $J_e$ is a smooth compactly supported function with a singularity of order 
$m$ at the origin. See \cite{KPZJeremy} for a precise definition of what we mean by a singularity
of order $m$. It suffices to know at this stage that $\rho_\eps$ and $\rho_\eps^{\star 2}$
have a singularity of order $3$ and $K$ has a singularity of order $1$.

If $r_e > 0$, then the corresponding edge
represents a factor 
\begin{equ}
\hat J_e(x_v, x_{\bar v}) = J_e(x_{\bar v} - x_v) - \sum_{|k|_\s < r_e} {x_{\bar v}^k \over k!} D^k J_e(-x_v)\;.
\end{equ}
Here we see why the orientation matters in this case. Previously, changing the orientation 
of the edge yields exactly the same factor, provided that we simultaneously change the function
$J_e$ into the function $x \mapsto J_e(-x)$. When $r>0$, this is no longer the case.

The description of the edges with $r_e < 0$ is slightly more delicate. As before, we give 
ourselves a kernel $J_e$ with a singularity of order $m$ at the origin. This time however,
$J_e$ is not necessarily integrable, so we build from it the distribution
\begin{equ}
\bigl(\Ren J_e\bigr)(\phi) = \int J_e(x) \Bigl(\phi(x) - \sum_{|k|_\s < |r_e|} {x^k \over k!} D^k\phi(0)\Bigr)\,dx + \sum_{|k|_\s < |r_e|} {I_e^{(k)} \over k!} D^k\phi(0)\;,
\end{equ}
where the $I_e^{(k)}$ are some finite numbers that we also need to specify for such an edge.
Provided that $m_e + r_e < 3$ (here $3$ is the scaling dimension of parabolic space-time), 
this yields a well-posed distribution thanks to the
fact that $J_e$ is integrated against a function that vanishes to sufficiently high order
at the origin. In our particular case, we only need to subtract the value of the test function 
$\phi$ at the origin. Furthermore, since the kernel $\rho_\eps^{(2)}$ integrates
to $1$, we actually have that, in the particular case described
by \eqref{e:IXi2bnd}, 
$\Ren J_e = J_e$ if we choose $I_0 = 1$. With such a ``renormalised
kernel'' at hand, we then set somewhat informally 
$\hat J_e(x_v, x_{\bar v}) = \bigl(\Ren J_e\bigr)(x_{\bar v} - x_v)$ in cases where $r < 0$.

There are furthermore two distinguished
edges (represented in boldface) that necessarily connect to the origin, so they are always
of the type $e = (v,0)$, and that represent a factor 
$\hat J_e(x_v, x_0) = \phi_\lambda(x_v-x_0)$. 
One should think of these edges as being decorated with the label $(0,0)$ but since
this is always the case we do not draw these labels.

In order to state our bounds, we will always denote such a graph by $(\CV,\CE)$,
where $\CE$ is a set of directed edges for the vertex set $\CV$. The distinguished
``origin'' is denoted by $0 \in \CV$ and we use the notation $\CV_0 = \CV \setminus \{0\}$.
We furthermore denote by $v_{\star,1}$ and $v_{\star,2}$ the two vertices that are
connected to the origin by the distinguished edges and we set
\begin{equ}
\CV_\star = \{0,v_{\star,1},v_{\star,2}\}\;.
\end{equ}
With all of these notations at hand, a labelled graph as above, together with
the corresponding collection of kernels $J_e$ and constants $I_e^{(k)}$
determines a number
\begin{equ}[e:bigsum]
\CI_\lambda(J) \eqdef \int_{(\R^2)^{\CV_0}} \prod_{e \in \CE} \hat J_e(x_{e_-},x_{e_+})\,dx\;,
\end{equ}
where we also implicitly set $x_0 = 0$.

\begin{remark}
At this stage, the careful reader may wonder what \eqref{e:bigsum} actually means:
some of the factors appearing there are distributions, so that this is not at all 
clear a priori. 
To clarify this, given any homogeneity $m \in \R$
and some $d > 0$, we define the following (semi)norm on the space of 
compactly supported functions that are smooth everywhere, except at the origin:
\begin{equ}
\|J\|_{m,d} = \sup_{|k|_\s < d} \sup_{0 < |x|_\s \le 1} |x|_\s^{m+|k|_\s} |D^k J(x)|\;.
\end{equ}
Recall again that here $x$ denotes a space-time point and $|x|_\s$ denotes
its parabolic norm. 
If we then replace each of the $J_e$ by a smooth
function $J_e^{(n)}$ such that $\|J_e - J_e^{(n)}\|_{m_e,d} \le 1/n$
for every $e$ and define $\hat J_e^{(n)}$ from $J_e^{(n)}$ as above, then
$\CI_\lambda(J^{(n)})$ is well-defined. 
The quantity $\CI_\lambda(J)$ is then defined as the limit of this quantity 
as $n \to 0$, provided that this limit exists and is independent of the 
approximating sequence for $d$ sufficiently large. If the limit doesn't exist 
or depends on the approximating sequence, then we simply set $\CI_\lambda(J) = \infty$.
\end{remark}

There is a natural homogeneity associated to $\CI_\lambda$ as follows. To each
integration variable, we associate a homogeneity $3$, which is the scaling dimension
of parabolic space-time. To each factor $\hat J_e$ corresponding to a kernel of
singularity $m_e$, we associate a homogeneity $-m_e$, except for the factors
$\phi_\lambda$ which have homogeneity $-3$, and not $0$. 
In other words, the total homogeneity of this expression is given by
\begin{equ}
\alpha = 3 |\CV_0| - 6 - \sum_{e \in \CE} m_e = 3 |\CV \setminus\CV_\star| - \sum_{e \in \CE} m_e\;.
\end{equ}
It is natural to guess that one then has $|\CI_\lambda| \sim \lambda^\alpha$
for small values of $\lambda$. This is \textit{not} the case in general! For example,
it might happen that the integral in \eref{e:bigsum} does not even converge.
Alternatively, it might happen that it converges, but the resulting expression
has the ``wrong'' homogeneity.

In order to formulate the additional assumptions we will place on our labelled graph, 
we define, for any $\bar \CV \subset \CV$, the following subsets of $\CE$:
\begin{equs}
\CE^\uparrow(\bar \CV) &= \{e  \in \CE\,:\, e\cap \bar \CV = e_- \;\&\; r_e > 0\}\;,\\
\CE^\downarrow(\bar \CV) &= \{e  \in \CE\,:\, e\cap \bar \CV = e_+ \;\&\; r_e > 0\}\;,\\
\CE_0(\bar \CV) &= \{e  \in \CE\,:\, e\cap \bar \CV = e\}\;,\\
\CE(\bar \CV) &= \{e  \in \CE\,:\, e\cap \bar \CV \neq \emptyset\}\;.
\end{equs}
Here, we use the notation $e = (e_-,e_+)$ for a directed edge.
Consider now the following assumption, where we use the shorthands 
$r_e^+ = (r_e \vee 0)$ and $r_e^- = -(r_e \wedge 0)$.

\begin{assumption}\label{ass:graph}
The labelled graph $(\CV,\CE)$ satisfies the following properties.
\begin{itemize}\itemsep0em
\item[1.] For every edge $e \in \CE$, one has $m_e - r_e^- < 3$.
\item[2.] For every subset
$\bar \CV \subset \CV_0$ of cardinality at least $3$, one has 
\begin{equ}[e:assEdges]
\sum_{e \in \CE_0(\bar \CV)} m_e < 3(|\bar \CV| - 1)\;.
\end{equ}
\item[3.] For every subset
$\bar \CV \subset \CV$ containing $0$ and of cardinality at least $2$, one has 
\begin{equ}[e:assEdges1]
\sum_{e \in \CE_0(\bar \CV)} m_e + \sum_{e \in \CE^\uparrow(\bar \CV)}(m_e + r_e - 1) - \sum_{e \in \CE^\downarrow(\bar \CV)} r_e < 3(|\bar \CV| - 1)\;.
\end{equ}
\item[4.] For every non-empty subset $\bar \CV \subset \CV\setminus\CV_\star$,
one has the bounds
\begin{equ}[e:assEdges2]
\sum_{e \in \CE(\bar \CV)\setminus \CE^\downarrow(\bar \CV)} m_e  + \sum_{e \in \CE^\uparrow(\bar \CV)} r_e- \sum_{e \in \CE^\downarrow(\bar \CV)} (r_e-1) > 3|\bar \CV| \;.
\end{equ}
\end{itemize}
\end{assumption}
It turns out that Assumption~\ref{ass:graph} is sufficient to 
guarantee that the quantity
$\CI_\lambda$ does indeed have the correct scaling behaviour for small values of $\lambda$.
This is the content of the following theorem, the proof of which can be found in \cite{KPZJeremy}.

\begin{theorem}\label{theo:ultimate}
Provided that Assumption~\ref{ass:graph} holds, there exists $d> 0$ and a constant 
$C$ depending only on the number of  vertices in $\CV$ 
and on the values of the constants $I_e$, such that
\begin{equ}
|\CI_\lambda(J)| \le C \lambda^\alpha \prod_{e\in \CE} \|J_e\|_{m_e,d}\;,\qquad \lambda \in (0,1]\;,
\end{equ}
where $\alpha = 3 |\CV\setminus\CV_\star| - \sum_{e \in \CE} m_e$.
\end{theorem}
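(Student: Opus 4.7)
The plan is to establish this estimate via a multi-scale (Littlewood--Paley / Hepp sector) analysis of the integrand, following the strategy of the reconstruction theorem in \cite{Regularity} adapted to labelled graphs as in \cite{KPZJeremy}. First I would decompose each kernel as $J_e = \sum_{n_e \ge 0} J_e^{(n_e)}$, where $J_e^{(n_e)}$ is smooth, supported in an annulus $\{|x|_\s \sim 2^{-n_e}\}$ (with a single piece absorbing the large-scale tail), and satisfies $\|D^k J_e^{(n_e)}\|_{L^\infty} \lesssim 2^{n_e(m_e + |k|_\s)} \|J_e\|_{m_e, d}$ for $|k|_\s < d$. The distinguished edges carrying $\phi_\lambda$ are treated similarly, but with the scale truncated at $2^{-n} \sim \lambda$. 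This turns $\CI_\lambda(J)$ into a sum over multiscales $\mathbf{n} = (n_e)_{e\in\CE}$ of an integral $\CI_\lambda^{\mathbf{n}}(J)$ whose integrand is pointwise bounded; the task reduces to extracting a factor $2^{-\alpha \min_e n_e}$ (times a summable remainder) from each $\CI_\lambda^{\mathbf{n}}$.

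Second, for each multiscale $\mathbf{n}$ I would organise the integration by introducing a rooted binary tree $\mathcal{T}$ on $\CV$ (a Hepp sector) such that the internal nodes of $\mathcal{T}$, equipped with scale labels, record which clusters of vertices are simultaneously close at which scale. Summing over all compatible $(\mathcal{T}, \mathbf{n})$ recovers $\CI_\lambda(J)$. Within a fixed sector one then integrates variables from the leaves toward the root: integrating out a cluster $\bar\CV$ at scale $2^{-k}$ contributes at most $2^{-k(3|\bar\CV_{\mathrm{int}}| - \sum_{e\in \CE_0(\bar\CV)} m_e)}$ from the internal integrations, where $\bar\CV_{\mathrm{int}}$ is the set of vertices strictly inside the cluster. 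Conditions~(2) and~(3) of Assumption~\ref{ass:graph} are precisely the statements that this exponent is strictly positive for every cluster not containing, respectively containing, the root; this provides geometric summability of the corresponding scale variable.

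Third, the gains from renormalisation are encoded in condition~(4) and must be inserted before the scale sums are performed. For an edge with $r_e > 0$, the Taylor subtraction $J_e(x_{\bar v}-x_v) - \sum_{|k|_\s < r_e} \frac{x_{\bar v}^k}{k!} D^k J_e(-x_v)$ produces an extra factor $|x_{\bar v}|_\s^{r_e}$ when $|x_{\bar v}|_\s \ll |x_v|_\s$, which translates into a bound $2^{-n_e m_e}(2^{-k}/2^{-n_e})^{r_e}$ inside any cluster at scale $2^{-k}$ containing $e_+$ but not $e_-$. For $r_e < 0$ the renormalised kernel $\Ren J_e$ agrees with $J_e$ away from the origin and, when tested against functions vanishing to order $|r_e|$ at $0$, satisfies the same scaling bounds; this yields an improvement when $e_+$ is in a cluster concentrated at the origin. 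Condition~(4), applied to every subset $\bar\CV \subset \CV\setminus\CV_\star$, ensures that after these gains are distributed the total scaling exponent of every cluster remains strictly positive, so all geometric sums over scale labels of internal tree nodes converge.

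The hardest step, and the one that really requires the full force of \cite[Thm.~A.3]{Regularity} or the dedicated argument of \cite{KPZJeremy}, is the combinatorial redistribution of the Taylor and renormalisation gains across the tree $\mathcal{T}$. The difficulty is twofold: Taylor remainders for $r_e > 0$ gain a factor of the \emph{endpoint} coordinate $x_{e_+}$, not of the edge length, so the gain must be propagated up the tree until it reaches a cluster containing the origin; at the same time, the distributional renormalisation for $r_e < 0$ introduces counterterms whose scales must be carefully tracked so as not to spoil the positivity of the cluster exponents at small scales. Matching these two mechanisms against the precise combinatorial inequality~\eqref{e:assEdges2} is what guarantees that no logarithmic divergences appear and that the final bound has the clean scaling $\lambda^\alpha$; I would handle this by induction on the depth of the Hepp tree, absorbing gains into children before ascending, exactly as in the proof in the cited reference.
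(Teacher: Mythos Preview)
The paper does not actually prove this theorem: it states it and defers the proof entirely to \cite{KPZJeremy} (``the proof of which can be found in \cite{KPZJeremy}''). Your proposal is a reasonable high-level sketch of precisely that multiscale / Hepp-sector argument, and you explicitly identify the cited reference as the place where the hard combinatorial step is carried out; so your approach is the same as the one the paper points to, and there is nothing further to compare.
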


\subsection{Construction of labelled graphs}
\label{sec:labGraphs}

In our case, we will always consider the situation where the labelled graph 
$(\CV,\CE)$ is built from a ``half graph'' in the following way:
\begin{equ}
\E\;\left|\;
\begin{tikzpicture}[baseline=1.2cm]
	  \node[cloud, fill=gray!20, cloud puffs=16, cloud puff arc= 100,
	        minimum width=1.5cm, minimum height=2.2cm, aspect=1] at (-1,1.7) {$\cdots$};
	
	  \node at (0,0)  [root] (zero) {};
	  \node at (-1,1)  [dot] (left) {};
	  \node at (-0.6,1.2)  [dot] (l1) {};
	  \node at (-0.6,2.2)  [dot] (l2) {};
	  \node at (0.3,1.2)  [var] (v1) {};
	  \node at (0.3,2.2)  [var] (v2) {};
	\draw[testfcn] (left) to (zero);
	
	\draw[rho] (l1) to (v1);
	\draw[rho] (l2) to (v2);
\end{tikzpicture}\;
\right|^2
\;\le\quad
\begin{tikzpicture}[baseline=1.2cm]
	  \node[cloud, fill=gray!20, cloud puffs=16, cloud puff arc= 100,
	        minimum width=1.5cm, minimum height=2.2cm, aspect=1] at (-1,1.7) {$\cdots$};
	  \node[cloud, fill=gray!20, cloud puffs=16, cloud puff arc= 100,
	        minimum width=1.5cm, minimum height=2.2cm, aspect=1] at (1,1.7) {$\cdots$};
	
	  \node at (0,0)  [root] (zero) {};
	  \node at (-1,1)  [dot] (left) {};
	  \node at (1,1)  [dot] (right) {};
	  \node at (-0.6,1.2)  [dot] (l1) {};
	  \node at (-0.6,2.2)  [dot] (l2) {};
	  \node at (0.6,1.2)  [dot] (r1) {};
	  \node at (0.6,2.2)  [dot] (r2) {};
	\draw[dist] (left) to (zero);
	\draw[dist] (right) to (zero);
	
	\draw (l1) to  node[labl] {\tiny 3+,-1}  (r1);
	\draw (l2) to  node[labl] {\tiny 3+,-1}  (r2);
\end{tikzpicture}
\;,
\end{equ}
where one furthermore performs the substitutions
\begin{equ}[e:subs]
\tikz[baseline=-0.6cm] \draw[kernel] (0,0) -- (0,-1); \to \tikz[baseline=-0.6cm] \draw (0,0) to node[labl] {\tiny 1,0} (0,-1); \;,\qquad
\tikz[baseline=-0.6cm] \draw[kernel1] (0,0) -- (0,-1); \to \tikz[baseline=-0.6cm] \draw (0,0) to node[labl] {\tiny 1,1} (0,-1); \;,\qquad
\tikz[baseline=-0.6cm] \draw[kernel2] (0,0) -- (0,-1); \to \tikz[baseline=-0.6cm] \draw (0,0) to node[labl] {\tiny 1,2} (0,-1); \;,\qquad
\tikz[baseline=-0.6cm]{\node at (0,-0.5) [dot] (d) {}; \draw[rho] (0,0) -- (d) -- (0,-1);} \to \tikz[baseline=-0.6cm] \draw (0,0) to node[labl] {\tiny 3+,-1} (0,-1); \;,\qquad
\tikz[baseline=-0.6cm] \draw[kprime] (0,0) -- (0,-1); \to \tikz[baseline=-0.6cm] \draw (0,0) to node[labl] {\tiny 2,0} (0,-1); \;,\qquad
\tikz[baseline=-0.6cm] \draw[kernelBig] (0,0) -- (0,-1); \to \tikz[baseline=-0.6cm] \draw (0,0) to node[labl] {\tiny 4+,-2} (0,-1); \;,
\end{equ}
and the bold green edges denote the distinguished edges $(v_{\star,i},0)$. 
Here, \tikz[baseline=-0.1cm] \draw[kprime] (0,0) -- (1,0); denotes the spatial derivative
of the kernel $K$ and  \tikz[baseline=-0.1cm] \draw[kernelBig] (0,0) -- (1,0);
denotes the kernel $\Ren Q_\eps$ appearing in \eqref{e:defRQeps} below.
The rationale for this is the fact that, for any $d > 0$, one has
\begin{equ}
\|K\|_{1,d} + \|K'\|_{2,d}  < \infty\;, 
\quad \sup_{\eps \in (0,1]} \|\rho^{(2)}_\eps\|_{3,d} < \infty \;,
\quad \sup_{\eps \in (0,1]} \eps^{-\kappa} \|\rho^{(2)}_\eps\|_{3+\kappa,d}
 < \infty\;, \label{e:boundsKernels}
\end{equ}
for every $\kappa \in (0,1)$.

%
%

\subsubsection{Estimating the first term}

Let us now return to our proof of convergence for $\hat \Pi_0^{(\eps)} \<Xi2>$.
It is straightforward to verify by inspection that the graph appearing in \eref{e:IXi2bnd}
does indeed satisfy the assumptions of Theorem~\ref{theo:ultimate}, so that one has a 
bound of the type $\E \bigl|\bigl(\hat \Pi_0^{(\eps)} \<Xi2>\bigr)(\phi_\lambda)\bigr|^2 \lesssim \lambda^{-2}$, uniformly over $\eps, \lambda \in (0,1]$. 

We now claim that once a bound of this type has been obtained, one automatically obtains
convergence to a limiting random variable $\hat \Pi_0 \tau$. Indeed, simply define 
$\hat \Pi_0 \tau$ in the same way as $\hat \Pi_0^{(\eps)}\tau$, but with all edges 
representing $\rho_\eps$ removed. In our case, this yields the identity
\begin{equ}[e:limitXi2]
\bigl(\hat \Pi_0 \<Xi2>\bigr)(\phi_\lambda) \eqdef \;
\begin{tikzpicture}[scale=0.35,baseline=0.3cm]
	\node at (0,-1)  [root] (root) {};
	\node at (-2,1)  [var] (left) {};
	\node at (0,3)  [var] (left1) {};
	
	\draw[testfcn] (left) to  (root);	
	\draw[kernel1] (left1) to (left);
\end{tikzpicture}\;
 - \;
\begin{tikzpicture}[scale=0.35,baseline=0.3cm]
	\node at (0,-1)  [root] (root) {};
	\node at (0,3)  [dot] (top) {};
	
	\draw[testfcn,bend right = 60] (top) to  (root);
	
	\draw[kernel,bend left = 60] (top) to (root);
\end{tikzpicture}\;.
\end{equ}
(As a matter of fact, this definition only yields a random variable 
$\bigl(\hat \Pi_0 \<Xi2>\bigr)(\phi_\lambda)$
for every test function $\phi_\lambda$. The fact that there exists a model-valued random
variable $\hat\Pi_0$ such that $\bigl(\hat \Pi_0 \tau\bigr)(\phi_\lambda)$ agrees with this
almost surely will follow from \cite[Thm~10.7]{Regularity}, once all the relevant 
bounds have been obtained.)
We then note that $\bigl(\hat \Pi_0^{(\eps)} \<Xi2> - \hat \Pi_0 \<Xi2>\bigr)(\phi_\lambda)$
can be decomposed as a sum of terms, each of them looking like \eref{e:decompPiXiTwo}, but with 
some of the edges representing $\rho_\eps$ now representing $\delta$ and exactly one of these edges
representing $\rho_\eps - \delta$. As a consequence of the last bound in \eref{e:boundsKernels},
we then obtain immediately the bound
\begin{equ}[e:IXi2bnddiff]
\eps^{-\kappa} \E \bigl|\bigl(\hat \Pi_0^{(\eps)} \<Xi2> - \hat \Pi_0 \<Xi2>\bigr)(\phi_\lambda)\bigr|^2 \lesssim \;
\begin{tikzpicture}[scale=0.4,baseline=0.3cm]
	\node at (0,-1)  [root] (root) {};
	\node at (-1.5,1)  [dot] (left) {};
	\node at (-1.5,3)  [dot] (left1) {};
	\node at (1.5,1) [dot] (variable1) {};
	\node at (1.5,3) [dot] (variable2) {};
	
	\draw[dist] (left) to (root);
	\draw[dist] (variable1) to (root);
	
	\draw[->,generic] (left1) to  node[labl,pos=0.45] {\tiny 1,1} (left);
	\draw[->,generic] (variable2) to node[labl,pos=0.45] {\tiny 1,1} (variable1);
	\draw[generic] (variable2) to node[labl] {\tiny 3+,-1} (left1); 
	\draw[generic] (variable1) to  node[labl] {\tiny 3+,-1} (left); 
\end{tikzpicture}
\; + \;
\left(
\begin{tikzpicture}[scale=0.4,baseline=0.2cm]
	\node at (0,-1)  [root] (root) {};
	\node at (-1.5,2)  [dot] (left) {};
	\node at (1.5,2) [dot] (variable1) {};
	
	\draw[dist] (left) to (root);
	\draw[generic] (variable1) to node[labl] {\tiny 1,0} (root);
	
	\draw[generic] (variable1) to  node[labl] {\tiny 3+,-1} (left); 
\end{tikzpicture}
\right)^2 \lesssim \lambda^{-2-2\kappa}\;,
\end{equ}
where we wrote $3+$ as a shorthand for $3+\kappa$.
Noting that $2|\<Xi2>| = -2-4\kappa$, this does indeed imply 
the bound required in \eref{e:wantedBound} for the particular case $\tau = \<Xi2>$.
The remainder of this section is devoted to the proof of this bound for the 
remaining symbols $\tau$ with $|\tau| < 0$, see the list \eref{e:symbols}.

\begin{remark}
Note that in general one does \textit{not} have the identity 
\begin{equ}[e:wantedWick]
\bigl(\hat \Pi_0 \<Xi2>\bigr)(\phi_\lambda) = \bigl(\hat \Pi_0 \<IXi> \diamond \hat \Pi_0 \<Xi>\bigr)(\phi_\lambda)\;,
\end{equ}
with $\diamond$ the Wick product in the sense of white noise analysis. 
The discrepancy between
the two expressions is the second term in \eref{e:limitXi2}.
If however the support of the test function is located in the future,
then \eqref{e:wantedWick} and therefore \eqref{e:Itointegral} 
(with $\tau = \<IXi>$) \textit{does} hold thanks to the fact that the 
second term in \eref{e:limitXi2} 
vanishes in that case. This is an immediate consequence of the 
non-anticipative nature of the kernel $K$.
\end{remark}

\subsection{Proof of Theorem~\ref{theo:convModel}}

We now proceed to give the proof of Theorem~\ref{theo:convModel}
for any symbol $\tau$ with $|\tau| < 0$. 
The proof is always essentially the same, so we only give the main steps.

\subsubsection[Terms Xi2X]{Convergence for the symbols \texorpdfstring{$\<Xi2>$}{Xi2}\,, \texorpdfstring{$\<Xi2X>$}{Xi2X}\,,
and \texorpdfstring{$\<XXi2>$}{XXi2}}

In the preceding subsection, we have shown in detail that 
there exists a random distribution $\hat \Pi_0 \<Xi2>$ such that
\begin{equ}
\E \bigl|\bigl(\hat \Pi_0 \<Xi2>\bigr)(\phi^\lambda)\bigr|^2
\lesssim \lambda^{2|\<Xi2s>|+\kappa}\;,\qquad 
\E \bigl|\bigl(\hat \Pi_0 \<Xi2> - \hat \Pi_0^{(\eps)} \<Xi2>\bigr)(\phi^\lambda)\bigr|^2
\lesssim \eps^\kappa\lambda^{2|\<Xi2s>|+\kappa}\;.
\end{equ}
We have furthermore shown that the identity \eqref{e:wantedWick} holds, which
indeed yields \eqref{e:Itointegral} for $\tau = \<IXi>$.
Since $\bigl(\hat \Pi_0^{(\eps)}\<XXi2>\bigr)(\phi) = \bigl(\hat \Pi_0^{(\eps)}\<Xi2>\bigr)(\tilde\phi)$ with $\tilde \phi(t,x) = x\phi(t,x)$, the required properties and bounds for
$\<XXi2>$ follow immediately from those for $\<Xi2>$.

At this stage, we introduce the kernel $Q_\eps$ given by 
\begin{equ}
Q_\eps(z) = K(z)\,\rho^{(2)}_\eps(z)\;,
\end{equ}
and we use the notation \tikz[baseline=-0.1cm] \draw[kernelBig] (0,0) to (1,0); for the renormalised kernel 
\begin{equ}[e:defRQeps]
\Ren Q_\eps(z) = K(z)\,\rho^{(2)}_\eps(z) - C_\eps \delta_0(z)\;.
\end{equ}
Note that $C_\eps$ is precisely the integral of $Q_\eps$ and $Q_\eps$ is an
even function of the spatial variable, so that $\Ren Q_\eps$ annihilates every
polynomial of parabolic degree strictly less than $2$.
Regarding $\<Xi2X>$, we use these notations to obtain the identity
\begin{equ}[e:decompPiXiTwo2]
\bigl(\hat \Pi_0^{(\eps)} \<Xi2X>\bigr)(\phi_\lambda) = \;
\begin{tikzpicture}[scale=0.35,baseline=0.3cm]
	\node at (0,-1)  [root] (root) {};
	\node at (-1,1)  [dot] (left) {};
	\node at (1,1)  [dot] (left1) {};
	\node at (-1,3) [var] (variable1) {};
	\node at (1,3) [var] (variable2) {};
	
	\draw[testfcn] (left) to  (root);
	
	\draw[kernel2] (left1) to (left);
	\draw[rho] (variable2) to (left1); 
	\draw[rho] (variable1) to (left); 
	\draw[multx] (left1) to (root); 
\end{tikzpicture}
\; + \;
\begin{tikzpicture}[scale=0.35,baseline=0.3cm]
	\node at (0,-1)  [root] (root) {};
	\node at (-1,1)  [dot] (left) {};
	\node at (1,1) [dot] (right) {};
	
	\draw[testfcn] (left) to  (root);
	\draw[kernelBig] (left) to (right);
	\draw[multx] (right) to (root);
\end{tikzpicture}
\; - \;
\begin{tikzpicture}[scale=0.35,baseline=0.3cm]
	\node at (0,-1)  [root] (root) {};
	\node at (-1,1)  [dot] (left) {};
	\node at (0,3)  [dot] (top) {};
	\node at (1,1) [dot] (right) {};
	
	\draw[testfcn] (left) to  (root);
	
	\draw[kernelx] (right) to (root);
	\draw[rho] (top) to (right); 
	\draw[rho] (top) to (left); 
\end{tikzpicture}
\; - \;
\begin{tikzpicture}[scale=0.35,baseline=0.3cm]
	\node at (0,-1)  [root] (root) {};
	\node at (-1,1)  [dot] (left) {};
	\node at (0,3)  [dot] (top) {};
	\node at (1,1) [dot] (right) {};
	
	\draw[testfcnx] (left) to  (root);
	
	\draw[kprimex] (right) to (root);
	\draw[rho] (top) to (right); 
	\draw[rho] (top) to (left); 
\end{tikzpicture}\;.
\end{equ}
Here, we used notation \tikz[baseline=-0.1cm] \draw[multx] (0,0) to (1,0); 
for the kernel $(t,x) \mapsto x$ (which is of homogeneity $+1$),
\tikz[baseline=-0.1cm] \draw[kernelx] (0,0) to (1,0); for the kernel
$(t,x) \mapsto xK(t,x)$, and similarly for \tikz[baseline=-0.1cm] \draw[kprimex] (0,0) to (1,0);. 
Note that while the function $(t,x) \mapsto x$ is of course not of compact support,
we can replace it by a compactly supported function independent of $\eps$ without
changing the values of these integrals. We are therefore back in the context of
Theorem~\ref{theo:ultimate} and it is indeed possible to verify that each of these
terms satisfies Assumption~\ref{ass:graph}.

\begin{remark}
Here and below, Assumption~\ref{ass:graph}
can be verified ``by hand'',
but this soon becomes rather tedious. The interested reader will find
a small computer program at the URL \href{http://www.hairer.org/paper/Trees.zip}{http://www.hairer.org/paper/Trees.zip} which verifies that
Assumption~\ref{ass:graph} does indeed hold for all the graphs
for which we make such a claim in this article. 
\end{remark}

Since one has 
\begin{equ}[e:zigzag]
\|\Ren Q_\eps\|_{-4-\kappa} \lesssim \eps^\kappa\;,
\end{equ}
the term including this kernel vanishes in the limit. It follows that one
has the identity
\begin{equ}
\bigl(\hat \Pi_0 \<Xi2X>\bigr)(\phi_\lambda) = \;
\begin{tikzpicture}[scale=0.35,baseline=0.3cm]
	\node at (0,-1)  [root] (root) {};
	\node at (-2,1)  [var] (left) {};
	\node at (0,3)  [var] (left1) {};
	
	\draw[testfcn] (left) to  (root);	
	\draw[kernel2] (left1) to (left);
	\draw[multx] (left1) to (root);
\end{tikzpicture}
\; - \;
\begin{tikzpicture}[scale=0.35,baseline=0.3cm]
	\node at (0,-1)  [root] (root) {};
	\node at (0,3)  [dot] (top) {};
	
	\draw[testfcn,bend right = 60] (top) to  (root);
	
	\draw[kernelx,bend left = 60] (top) to (root);
\end{tikzpicture}
\; - \;
\begin{tikzpicture}[scale=0.35,baseline=0.3cm]
	\node at (0,-1)  [root] (root) {};
	\node at (0,3)  [dot] (top) {};
	
	\draw[testfcnx,bend right = 60] (top) to  (root);
	
	\draw[kprimex,bend left = 60] (top) to (root);
\end{tikzpicture}\;.
\end{equ}
As before, the last two terms vanish if the test function is supported in the future.
The first term on the other hand is easily seen to be equal to 
$\bigl(\hat \Pi_0 \<IXiX>\bigr)(\phi_\lambda)\diamond \bigl(\hat \Pi_0 \<Xi>\bigr)(\phi_\lambda)$,
which yields \eqref{e:Itointegral} in this case.

\subsubsection[Term Xi3]{Convergence for the symbol \texorpdfstring{$\<Xi3>$}{Xi3}}

Regarding the symbol $\<Xi3>$, we combine \eref{e:admissible3} with the
expression \eref{e:Delta} for $\Delta \<Xi3>$ and the expression \eqref{e:deltaM}
for $\DeltaM$ to obtain similarly to before
\begin{equ}
\bigl(\hat \Pi_0^{(\eps)} \<Xi3>\bigr)(\phi_\lambda) = \;
\begin{tikzpicture}[scale=0.35,baseline=0.5cm]
	\node at (0,-1)  [root] (root) {};
	\node at (-2,1)  [dot] (left) {};
	\node at (-2,3)  [dot] (left1) {};
	\node at (-2,5)  [dot] (left2) {};
	\node at (0,1) [var] (variable1) {};
	\node at (0,3) [var] (variable2) {};
	\node at (0,5) [var] (variable3) {};
	
	\draw[testfcn] (left) to  (root);
	
	\draw[kernel1] (left1) to (left);
	\draw[kernel1] (left2) to (left1);
	\draw[rho] (variable3) to (left2); 
	\draw[rho] (variable2) to (left1); 
	\draw[rho] (variable1) to (left); 
\end{tikzpicture}
\; + \;
\left(
\begin{tikzpicture}[scale=0.35,baseline=0.5cm]
	\node at (0,-1)  [root] (root) {};
	\node at (-2,1)  [dot] (left) {};
	\node at (-2,3)  [dot] (left1) {};
	\node at (-2,5)  [dot] (left2) {};
	\node at (0,2) [dot] (variable2) {};
	\node at (0,5) [var] (variable3) {};
	
	\draw[testfcn] (left) to  (root);
	
	\draw[kernel1] (left1) to (left);
	\draw[kernel1] (left2) to (left1);
	\draw[rho] (variable3) to (left2); 
	\draw[rho] (variable2) to (left1); 
	\draw[rho] (variable2) to (left); 
\end{tikzpicture}
\; - \;
\begin{tikzpicture}[scale=0.35,baseline=0.5cm]
	\node at (0,-1)  [root] (root) {};
	\node at (-2,1)  [dot] (left) {};
	\node at (-1,3)  [dot] (left1) {};
	\node at (-2,5)  [dot] (left2) {};
	\node at (0,2) [dot] (variable2) {};
	\node at (0,5) [var] (variable3) {};
	
	\draw[testfcn] (left) to  (root);
	
	\draw[kernel] (left1) to (left);
	\draw[kernel1] (left2) to (left);
	\draw[rho] (variable3) to (left2); 
	\draw[rho] (variable2) to (left1); 
	\draw[rho] (variable2) to (left); 
\end{tikzpicture}
\right)
\; + \;
\begin{tikzpicture}[scale=0.35,baseline=0.5cm]
	\node at (0,-1)  [root] (root) {};
	\node at (-2,1)  [dot] (left) {};
	\node at (-2,3)  [dot] (jnct) {};
	\node at (-0.75,3)  [dot] (left1) {};
	\node at (-2,5)  [dot] (left2) {};
	\node at (0.5,3) [var] (variable2) {};
	
	\draw[testfcn] (left) to  (root);
	
	\draw[kernel1] (left1) to (left);
	\draw[kernel1] (left2) to (left1);
	\draw[rho] (jnct) to (left2); 
	\draw[rho] (variable2) to (left1); 
	\draw[rho] (jnct) to (left); 
\end{tikzpicture}
\; - \;
\begin{tikzpicture}[scale=0.35,baseline=0.3cm]
	\node at (0,-1)  [root] (root) {};
	\node at (-1,1)  [dot] (left) {};
	\node at (-0.5,2)  [dot] (topl) {};
	\node at (1,1) [dot] (right) {};
	\node at (2,3)  [var] (variable) {};
	\node at (0,3)  [dot] (topr) {};
	
	\draw[testfcn] (right) to  (root);
	
	\draw[kernel] (left) to (root);
	\draw[rho] (topr) to (topl); 
	\draw[rho] (topl) to (left); 
	\draw[kernel1] (topr) to (right); 
	\draw[rho] (right) to (variable); 
\end{tikzpicture}\;.
\end{equ}
 With the above notation for $\Ren Q_\eps$, one then has
\begin{equ}
\bigl(\hat \Pi_0^{(\eps)} \<Xi3>\bigr)(\phi_\lambda) = \;
\begin{tikzpicture}[scale=0.35,baseline=0.5cm]
	\node at (0,-1)  [root] (root) {};
	\node at (-2,1)  [dot] (left) {};
	\node at (-2,3)  [dot] (left1) {};
	\node at (-2,5)  [dot] (left2) {};
	\node at (0,1) [var] (variable1) {};
	\node at (0,3) [var] (variable2) {};
	\node at (0,5) [var] (variable3) {};
	
	\draw[testfcn] (left) to  (root);
	
	\draw[kernel1] (left1) to (left);
	\draw[kernel1] (left2) to (left1);
	\draw[rho] (variable3) to (left2); 
	\draw[rho] (variable2) to (left1); 
	\draw[rho] (variable1) to (left); 
\end{tikzpicture}
\; + \;
\begin{tikzpicture}[scale=0.35,baseline=0.5cm]
	\node at (0,-1)  [root] (root) {};
	\node at (-2,1)  [dot] (left) {};
	\node at (-2,3)  [dot] (left1) {};
	\node at (-2,5)  [dot] (left2) {};
	\node at (0,5) [var] (variable3) {};
	
	\draw[testfcn] (left) to  (root);
	
	\draw[kernelBig] (left1) to (left);
	\draw[kernel1] (left2) to (left1);
	\draw[rho] (variable3) to (left2); 
\end{tikzpicture}
\; - \;
\begin{tikzpicture}[scale=0.35,baseline=0.5cm]
	\node at (0,-1)  [root] (root) {};
	\node at (-2,1)  [dot] (left) {};
	\node at (-2,3)  [dot] (left1) {};
	\node at (-2,5)  [dot] (left2) {};
	\node at (0,3) [dot] (variable2) {};
	\node at (0,5) [var] (variable3) {};
	
	\draw[testfcn] (left) to  (root);
	
	\draw[kernel1] (left2) to (variable2);
	\draw[rho] (variable3) to (left2); 
	\draw[rho] (left) to (left1); 
	\draw[rho] (variable2) to (left1); 
	\draw[kernel] (variable2) to (root); 
\end{tikzpicture}
\; + \;
\begin{tikzpicture}[scale=0.35,baseline=0.5cm]
	\node at (0,-1)  [root] (root) {};
	\node at (-2,1)  [dot] (left) {};
	\node at (-2,3)  [dot] (jnct) {};
	\node at (-0.75,3)  [dot] (left1) {};
	\node at (-2,5)  [dot] (left2) {};
	\node at (0.5,3) [var] (variable2) {};
	
	\draw[testfcn] (left) to  (root);
	
	\draw[kernel1] (left1) to (left);
	\draw[kernel1] (left2) to (left1);
	\draw[rho] (jnct) to (left2); 
	\draw[rho] (variable2) to (left1); 
	\draw[rho] (jnct) to (left); 
\end{tikzpicture}
\; - \;
\begin{tikzpicture}[scale=0.35,baseline=0.3cm]
	\node at (0,-1)  [root] (root) {};
	\node at (-1,1)  [dot] (left) {};
	\node at (-0.5,2)  [dot] (topl) {};
	\node at (1,1) [dot] (right) {};
	\node at (2,3)  [var] (variable) {};
	\node at (0,3)  [dot] (topr) {};
	
	\draw[testfcn] (right) to  (root);
	
	\draw[kernel] (left) to (root);
	\draw[rho] (topr) to (topl); 
	\draw[rho] (topl) to (left); 
	\draw[kernel1] (topr) to (right); 
	\draw[rho] (right) to (variable); 
\end{tikzpicture}\;.
\end{equ}
As before, one can verify that each of these terms separately satisfies 
Assumption~\ref{ass:graph} (after associating to them a labelled graph via the 
procedure outlined in Section~\ref{sec:labGraphs}), 
so that they satisfy the bounds \eqref{e:wantedBound}.
Since furthermore $\Ren Q_\eps \to 0$ and $\rho_\eps \to \delta$ in the 
distributional sense as $\eps \to 0$, one obtains in the limit 
\begin{equ}[e:exprXi3]
\bigl(\hat \Pi_0 \<Xi3>\bigr)(\phi_\lambda) \eqdef \;
\begin{tikzpicture}[scale=0.35,baseline=0.25cm]
	\node at (0,-1)  [root] (root) {};
	\node at (-2,0.4)  [var] (left) {};
	\node at (0,1.6)  [var] (left1) {};
	\node at (-2,3)  [var] (left2) {};
	
	\draw[testfcn] (left) to  (root);
	
	\draw[kernel1] (left1) to (left);
	\draw[kernel1] (left2) to (left1);
\end{tikzpicture}
\; - \;
\begin{tikzpicture}[scale=0.35,baseline=0.25cm]
	\node at (0,-1)  [root] (root) {};
	\node at (0,1)  [dot] (top) {};
	\node at (0,3) [var] (variable) {};
	
	\draw[testfcn,bend left = 60] (top) to  (root);
	\draw[kernel,bend right = 60] (top) to (root);
	\draw[kernel1] (variable) to (top); 
\end{tikzpicture}
\; + \;
\begin{tikzpicture}[scale=0.35,baseline=0.25cm]
	\node at (0,-1)  [root] (root) {};
	\node at (0,1)  [dot] (top) {};
	\node at (0,3) [var] (variable) {};
	
	\draw[testfcn] (top) to  (root);
	\draw[kernel1,bend left = 60] (top) to (variable);
	\draw[kernel1,bend left = 60] (variable) to (top); 
\end{tikzpicture}
\; - \;
\begin{tikzpicture}[scale=0.35,baseline=0.25cm]
	\node at (0,-1)  [root] (root) {};
	\node at (-1,1)  [dot] (left) {};
	\node at (0,3) [var] (right) {};
	
	\draw[testfcn,bend left = 60] (right) to  (root);
	
	\draw[kernel] (left) to (root);
	\draw[kernel1] (left) to (right); 
\end{tikzpicture}\;.
\end{equ}
One might think that the penultimate term in this expression vanishes since the kernel
$K$ is non-anticipative. This would indeed be the case if this term were equal to
\begin{tikzpicture}[scale=0.35,baseline=-0.1cm]
	\node at (0,0)  [root] (root) {};
	\node at (2,0)  [dot] (top) {};
	\node at (4,0) [var] (variable) {};
	
	\draw[testfcn] (top) to  (root);
	\draw[kernel,bend left = 60] (top) to (variable);
	\draw[kernel,bend left = 60] (variable) to (top); 
\end{tikzpicture}. In our case however, it does not vanish in general, unless
the test function is supported in the future.

It remains to show \eqref{e:Itointegral}, namely that if the test function $\phi$ 
has support located in the future, then
\begin{equ}
\bigl(\hat \Pi_0 \<Xi3>\bigr)(\phi) = \bigl(\hat \Pi_0 \<IXi2> \diamond \hat \Pi_0 \<Xi>\bigr)(\phi)\;.
\end{equ}
For this, we note that, as a consequence of \eqref{e:admissible}, 
$\bigl(\hat \Pi_0 \<IXi2>\bigr)(\phi)$ is obtained from $\bigl(\hat \Pi_0 \<Xi2>\bigr)(\phi)$
by simply replacing 
\tikz[baseline=-0.1cm] \draw[testfcn] (0,0) to (1,0);
by 
\tikz[baseline=-0.1cm]{\node at (0,0)  [dot] (root) {}; \draw[testfcn] (root) to (1,0);\draw[kernel1] (-1,0) to (root);}. Taking the Wick product with $\hat \Pi_0 \<Xi>$ (which is nothing but the
underlying white noise $\xi$) then has the effect of simply further replacing 
\tikz[baseline=-0.1cm]{\node at (0,0)  [dot] (root) {}; \draw[testfcn] (root) to (1,0);\draw[kernel1] (-1,0) to (root);} by \tikz[baseline=-0.1cm]{\node at (0,0)  [var] (root) {}; \draw[testfcn] (root) to (1,0);\draw[kernel1] (-1,0) to (root);}. In other words, we obtain
\begin{equ}
\bigl(\hat \Pi_0 \<IXi2> \diamond \hat \Pi_0 \<Xi>\bigr)(\phi)
= \begin{tikzpicture}[scale=0.35,baseline=0.25cm]
	\node at (0,-1)  [root] (root) {};
	\node at (-2,0.4)  [var] (left) {};
	\node at (0,1.6)  [var] (left1) {};
	\node at (-2,3)  [var] (left2) {};
	
	\draw[testfcn] (left) to  (root);
	
	\draw[kernel1] (left1) to (left);
	\draw[kernel1] (left2) to (left1);
\end{tikzpicture}
\; - \;
\begin{tikzpicture}[scale=0.35,baseline=0.25cm]
	\node at (0,-1)  [root] (root) {};
	\node at (-1,1)  [dot] (left) {};
	\node at (0,3) [var] (right) {};
	
	\draw[testfcn,bend left = 60] (right) to  (root);
	
	\draw[kernel] (left) to (root);
	\draw[kernel1] (left) to (right); 
\end{tikzpicture}\;,
\end{equ}
so that the difference between 
$\bigl(\hat \Pi_0 \<Xi3>\bigr)(\phi_\lambda)$ and $\bigl(\hat \Pi_0 \<IXi2> \diamond \hat \Pi_0 \<Xi>\bigr)(\phi_\lambda)$ is given by the second and third terms in \eqref{e:exprXi3}.
As before, the second term vanishes
if the test function $\phi_\lambda$ is supported in the future.
The reason why this is also true for the third term is as follows. By definition, 
one has the identity
\begin{equ}[e:explodeloop]
\begin{tikzpicture}[scale=0.35,baseline=0.25cm]
	\node at (0,-1)  [root] (root) {};
	\node at (0,1)  [dot] (top) {};
	\node at (0,3) [var] (variable) {};
	
	\draw[testfcn] (top) to  (root);
	\draw[kernel1,bend left = 60] (top) to (variable);
	\draw[kernel1,bend left = 60] (variable) to (top); 
\end{tikzpicture}
=
\begin{tikzpicture}[scale=0.35,baseline=0.25cm]
	\node at (0,-1)  [root] (root) {};
	\node at (0,1)  [dot] (top) {};
	\node at (0,3) [var] (variable) {};
	
	\draw[testfcn] (top) to  (root);
	\draw[kernel,bend left = 60] (top) to (variable);
	\draw[kernel,bend left = 60] (variable) to (top); 
\end{tikzpicture}
\;-\;
\begin{tikzpicture}[scale=0.35,baseline=0.25cm]
	\node at (0,-1)  [root] (root) {};
	\node at (0,1)  [dot] (top) {};
	\node at (0,3) [var] (variable) {};
	
	\draw[testfcn] (top) to  (root);
	\draw[kernel] (top) to (variable);
	\draw[kernel,bend left = 60] (variable) to (root); 
\end{tikzpicture}
\;-\;
\begin{tikzpicture}[scale=0.35,baseline=0.25cm]
	\node at (0,-1)  [root] (root) {};
	\node at (0,1)  [dot] (top) {};
	\node at (0,3) [var] (variable) {};
	
	\draw[testfcn] (top) to  (root);
	\draw[kernel,bend right = 60] (top) to (root);
	\draw[kernel] (variable) to (top); 
\end{tikzpicture}
\;+\;
\begin{tikzpicture}[scale=0.35,baseline=0.25cm]
	\node at (0,-1)  [root] (root) {};
	\node at (0,1)  [dot] (top) {};
	\node at (0,3) [var] (variable) {};
	
	\draw[testfcn] (top) to  (root);
	\draw[kernel,bend right = 60] (top) to (root);
	\draw[kernel,bend left = 40] (variable) to (root); 
\end{tikzpicture}\;.
\end{equ}
We now see that each of these terms contains a closed loop with all arrows pointing
in the same direction. Since each of these arrows depicts either $K$ or $\phi$, both
of which are supported in the future, this implies that the corresponding integrands
vanish identically.

\subsubsection[Term Xi3b]{Convergence for the symbol \texorpdfstring{$\<Xi3b>$}{Xib3}}

We now turn to $\<Xi3b>$. In this case, we obtain from 
 \eref{e:admissible3}  and \eref{e:Delta} the identity
\begin{equ}
\bigl(\hat \Pi_0^{(\eps)} \<Xi3b>\bigr)(\phi_\lambda) = \;
\begin{tikzpicture}[scale=0.35,baseline=0.4cm]
	\node at (0,-1)  [root] (root) {};
	\node at (0,1)  [dot] (root2) {};
	\node at (-1.5,2)  [dot] (left) {};
	\node at (1.5,2)  [dot] (right) {};
	\node at (0,3) [var] (variable) {};
	\node at (-1.5,4) [var] (variablel) {};
	\node at (1.5,4) [var] (variabler) {};
	
	\draw[testfcn] (root2) to  (root);
	
	\draw[kernel1] (left) to (root2);
	\draw[kernel1] (right) to (root2);
	\draw[rho] (variable) to (root2); 
	\draw[rho] (variablel) to (left); 
	\draw[rho] (variabler) to (right); 
\end{tikzpicture}
\; - 2 \;
\begin{tikzpicture}[scale=0.35,baseline=0.4cm]
	\node at (0,-1)  [root] (root) {};
	\node at (0,1.5)  [dot] (root2) {};
	\node at (0,4)  [var] (left) {};
	\node at (1.5,0.5)  [dot] (right) {};
	\node at (1.5,3) [dot] (variabler) {};
	
	\draw[testfcn] (root2) to  (root);
	
	\draw[kernel1] (left) to (root2);
	\draw[kernel] (right) to (root);
	\draw[rho] (variable) to (root2); 
	\draw[rho] (variabler) to (root2); 
	\draw[rho] (variabler) to (right); 
\end{tikzpicture}
\; + \;
\begin{tikzpicture}[scale=0.35,baseline=0.4cm]
	\node at (0,-1)  [root] (root) {};
	\node at (0,1)  [dot] (root2) {};
	\node at (-1.5,2.5)  [dot] (left) {};
	\node at (1.5,2.5)  [dot] (right) {};
	\node at (0,2.5) [var] (variable) {};
	\node at (0,4) [dot] (top) {};
	
	\draw[testfcn] (root2) to  (root);
	
	\draw[kernel1] (left) to (root2);
	\draw[kernel1] (right) to (root2);
	\draw[rho] (variable) to (root2); 
	\draw[rho] (top) to (left); 
	\draw[rho] (top) to (right); 
\end{tikzpicture}
\;.
\end{equ}
Performing the substitutions \eqref{e:subs}, it is straightforward to verify that 
the labelled graphs arising from these three expressions from the procedure outlined
in Section~\ref{sec:labGraphs}
all satisfy Assumption~\ref{ass:graph},
so that the required bound \eqref{e:wantedBound} holds.
It follows that the limit as $\eps\to 0$ is given by
\begin{equ}[e:Xi3b]
\bigl(\hat \Pi_0 \<Xi3b>\bigr)(\phi_\lambda) = \;
\begin{tikzpicture}[scale=0.35,baseline=0.25cm]
	\node at (0,-1)  [root] (root) {};
	\node at (0,1)  [var] (root2) {};
	\node at (-1,3)  [var] (left) {};
	\node at (1,3)  [var] (right) {};
	
	\draw[testfcn] (root2) to  (root);
	
	\draw[kernel1] (left) to (root2);
	\draw[kernel1] (right) to (root2);
\end{tikzpicture}
\; - 2 \;
\begin{tikzpicture}[scale=0.35,baseline=0.25cm]
	\node at (0,-1)  [root] (root) {};
	\node at (0,1)  [dot] (top) {};
	\node at (0,3) [var] (variable) {};
	
	\draw[testfcn,bend left = 60] (top) to  (root);
	\draw[kernel,bend right = 60] (top) to (root);
	\draw[kernel1] (variable) to (top); 
\end{tikzpicture}
\; + \;
\begin{tikzpicture}[scale=0.35,baseline=0.25cm]
	\node at (0,-1)  [root] (root) {};
	\node at (0,1)  [var] (root2) {};
	\node at (0,3) [dot] (top) {};
	
	\draw[testfcn] (root2) to  (root);
	
	\draw[kernel1,bend left = 60] (top) to (root2);
	\draw[kernel1,bend right = 60] (top) to (root2);
\end{tikzpicture}
\;.
\end{equ}
Similarly, one sees that
\begin{equ}
\bigl(\hat \Pi_0 \<IXi^2>\bigr)(\phi_\lambda) = \;
\begin{tikzpicture}[scale=0.35,baseline=0.25cm]
	\node at (0,-1)  [root] (root) {};
	\node at (0,1)  [dot] (root2) {};
	\node at (-1,3)  [var] (left) {};
	\node at (1,3)  [var] (right) {};
	
	\draw[testfcn] (root2) to  (root);
	
	\draw[kernel1] (left) to (root2);
	\draw[kernel1] (right) to (root2);
\end{tikzpicture}
\; + \;
\begin{tikzpicture}[scale=0.35,baseline=0.25cm]
	\node at (0,-1)  [root] (root) {};
	\node at (0,1)  [dot] (root2) {};
	\node at (0,3) [dot] (top) {};
	
	\draw[testfcn] (root2) to  (root);
	
	\draw[kernel1,bend left = 60] (top) to (root2);
	\draw[kernel1,bend right = 60] (top) to (root2);
\end{tikzpicture}
\;.
\end{equ}
It follows that the difference between 
$\bigl(\hat \Pi_0 \<Xi3b>\bigr)(\phi_\lambda)$ and $\bigl(\hat \Pi_0 \<IXi^2> \diamond \hat \Pi_0 \<Xi>\bigr)(\phi_\lambda)$ is given by the second term in \eqref{e:Xi3b}, which again vanishes
if $\phi_\lambda$ is supported in the future.

\subsubsection[Term Xi4]{Convergence for the symbol \texorpdfstring{$\<Xi4>$}{Xi4}}

The renormalisation of this term involves the two constants $c_\rho^{(1,1)}$
and $c_\rho^{(1,2)}$. We note here that the notations used in \eqref{e:defrhopicture}
are slightly inconsistent from the ones employed here: dotted lines there denote
the convolution of $\rho$ with itself instead of $\rho_\eps$, arrows denote the heat
kernel $P$ instead of $K$, and \tikz[scale=0.35,baseline=-0.1cm]{\node at (0,0) [dot] (2) {};
\node at (2.2,0) [dot] (4) {};
\draw[kernel,bend right=60] (2) to (4);
\draw[rho,bend left=60] (2) to (4);
\node at (1.1,0) {$\Ren$};} denotes the kernel $\Ren Q_1$ (i.e. $\Ren Q_\eps$
with $\eps = 1$). It is however straightforward to verify that this integral is invariant
under rescaling of the variables by a factor $\eps$ so that, with current pictorial
notations, one has
\begin{equ}
c_\rho^{(1,1)} \approx
\begin{tikzpicture}[baseline=10,scale=0.5]
\node at (1,2) [dot] (1) {};
\node at (-1,2) [dot] (2) {};
\node at (1,0) [dot] (3) {};
\node at (1,1) [dot] (3b) {};
\node at (-1,0) [root] (4) {};
\node at (-1,1) [dot] (4b) {};

\draw[kernel] (1) to (2);
\draw[kernel] (2) to (3);
\draw[kernel] (3) to (4);

\draw[rho] (1) to (3b);
\draw[rho] (3b) to (3);
\draw[rho] (2) to (4b);
\draw[rho] (4b) to (4);
\end{tikzpicture}\;, \quad
c_\rho^{(1,2)} \approx
\begin{tikzpicture}[baseline=10,scale=0.5]
\node at (-1,2) [dot] (1) {};
\node at (1,2) [dot] (2) {};
\node at (-1,0) [root] (3) {};
\node at (-1,1) [dot] (3b) {};
\node at (1,0) [dot] (4) {};

\draw[kernel] (1) to (2);
\draw[kernel] (4) to (3);
\draw[kernelBig] (2) to (4);

\draw[rho] (1) to (3b);
\draw[rho] (3b) to (3);
\end{tikzpicture}\;,
\end{equ}
which would actually be identities if the arrows denoted the heat kernel without truncation. 
It is then a consequence of the convergence shown in Section~\ref{sec:convConst} 
that the error $E_\eps$ implicit in the $\approx$ signs appearing in these expressions
converges to $0$ as $\eps \to 0$.

Assume from now on without loss of generality that $\int \phi^\lambda(z)\,dz = 1$.
Combining this with the recursive definition of $\hat \Pi_0^{(\eps)} \<Xi4>$, we then obtain
\begin{equs}  
\bigl(\hat \Pi_0^{(\eps)} \<Xi4>\bigr)(\phi_\lambda) &= \; \label{e:exprXi4}
\begin{tikzpicture}[scale=0.35,baseline=0.5cm]
	\node at (0,-1)  [root] (root) {};
	\node at (-2,1)  [dot] (left) {};
	\node at (-2,3)  [dot] (left1) {};
	\node at (-2,5)  [dot] (left2) {};
	\node at (-2,7)  [dot] (left3) {};
	\node at (0,1) [var] (variable1) {};
	\node at (0,3) [var] (variable2) {};
	\node at (0,5) [var] (variable3) {};
	\node at (0,7) [var] (variable4) {};
	
	\draw[testfcn] (left) to  (root);
	
	\draw[kernel2] (left1) to (left);
	\draw[kernel1] (left2) to (left1);
	\draw[kernel1] (left3) to (left2);
	\draw[rho] (variable4) to (left3);
	\draw[rho] (variable3) to (left2); 
	\draw[rho] (variable2) to (left1); 
	\draw[rho] (variable1) to (left); 
\end{tikzpicture}
\;-\;
\begin{tikzpicture}[scale=0.35,baseline=0.5cm]
	\node at (0,-1)  [root] (root) {};
	\node at (-2,1)  [dot] (left) {};
	\node at (-2,3)  [dot] (left1) {};
	\node at (-2,5)  [dot] (left2) {};
	\node at (0,5)  [dot] (right2) {};
	\node at (-2,7)  [dot] (left3) {};
	\node at (-4,1) [var] (variable) {};
	\node at (0,7) [var] (variable3) {};

	\draw[testfcn] (left) to  (root);
	
	\draw[kernel2] (left1) to (left);
	\draw[rho] (left2) to (left1);
	\draw[kernel1] (left3) to (right2);
	\draw[kernel1] (left3) to (right2);
	\draw[kernel] (right2) to (root);
	\draw[rho] (left2) to (right2);
	\draw[rho] (variable3) to (left3); 
	\draw[rho] (variable) to (left); 
\end{tikzpicture}
\;+\;
\begin{tikzpicture}[scale=0.35,baseline=0.5cm]
	\node at (0,-1)  [root] (root) {};
	\node at (-2,1)  [dot] (left) {};
	\node at (-2,3)  [dot] (left1) {};
	\node at (-2,5)  [dot] (left2) {};
	\node at (-2,7)  [dot] (left3) {};
	\node at (0,1) [var] (variable1) {};
	\node at (-4,5) [dot] (exleft) {};
	\node at (0,5) [var] (variable3) {};

	\draw[testfcn] (left) to  (root);
	
	\draw[kernel2] (left1) to (left);
	\draw[kernel1] (left2) to (left1);
	\draw[kernel1] (left3) to (left2);
	\draw[rho] (variable3) to (left2); 
	\draw[rho] (exleft) to (left1); 
	\draw[rho] (exleft) to (left3);
	\draw[rho] (variable1) to (left); 
\end{tikzpicture}
\;-\;
\begin{tikzpicture}[scale=0.35,baseline=0.5cm]
	\node at (0,-1)  [root] (root) {};
	\node at (-2,1)  [dot] (left) {};
	\node at (-2,3)  [dot] (left1) {};
	\node at (-2,5)  [dot] (left2) {};
	\node at (0,3) [dot] (right1) {};
	\node at (0,5) [dot] (right2) {};
	\node at (-4,1) [var] (variable) {};
	\node at (-4,3) [var] (variable1) {};
	
	\draw[testfcn] (left) to  (root);
	
	\draw[kernel2] (left1) to (left);
	\draw[kernel1] (left2) to (left1);
	\draw[rho] (right2) to (left2);
	\draw[rho] (right2) to (right1); 
	\draw[rho] (variable1) to (left1); 
	\draw[rho] (variable) to (left); 
	\draw[kernel] (right1) to (root);
\end{tikzpicture}
\\&\;+\;
\begin{tikzpicture}[scale=0.35,baseline=0.5cm]
	\node at (0,-1)  [root] (root) {};
	\node at (-2,1)  [dot] (left) {};
	\node at (-2,3)  [dot] (left1) {};
	\node at (-2,5)  [dot] (left2) {};
	\node at (-2,7)  [dot] (left3) {};
	\node at (0,1) [var] (variable) {};
	\node at (0,7) [var] (variable3) {};
	
	\draw[testfcn] (left) to  (root);
	
	\draw[kernel2] (left1) to (left);
	\draw[kernelBig] (left2) to (left1);
	\draw[kernel1] (left3) to (left2);
	\draw[rho] (variable3) to (left3); 
	\draw[rho] (variable) to (left); 
\end{tikzpicture}
\;+\;
\begin{tikzpicture}[scale=0.35,baseline=0.5cm]
	\node at (0,-1)  [root] (root) {};
	\node at (-2,1)  [dot] (left) {};
	\node at (-2,3)  [dot] (left1) {};
	\node at (-2,5)  [dot] (left2) {};
	\node at (-2,7)  [dot] (left3) {};
	\node at (0,5) [var] (variable2) {};
	\node at (0,7) [var] (variable3) {};
	
	\draw[testfcn] (left) to  (root);
	
	\draw[kernelBig] (left1) to (left);
	\draw[kernel1] (left2) to (left1);
	\draw[kernel1] (left3) to (left2);
	\draw[rho] (variable3) to (left3); 
	\draw[rho] (variable2) to (left2); 
	\end{tikzpicture}
\;-\;
\begin{tikzpicture}[scale=0.35,baseline=0.5cm]
	\node at (0,-1)  [root] (root) {};
	\node at (-2,1)  [dot] (left) {};
	\node at (-2,3)  [dot] (left1) {};
	\node at (-2,5)  [dot] (left2) {};
	\node at (-2,7)  [dot] (left3) {};
	\node at (0,3) [dot] (right1) {};
	\node at (0,5) [var] (variable2) {};
	\node at (0,7) [var] (variable3) {};
	
	\draw[testfcn] (left) to  (root);
	
	\draw[rho] (left1) to (left);
	\draw[kernel1] (left2) to (right1);
	\draw[kernel1] (left3) to (left2);
	\draw[kernel] (right1) to (root);
	\draw[rho] (right1) to (left1);
	\draw[rho] (variable3) to (left3); 
	\draw[rho] (variable2) to (left2); 
	\end{tikzpicture}
\;-\;
\begin{tikzpicture}[scale=0.35,baseline=0.5cm]
	\node at (0,-1)  [root] (root) {};
	\node at (-2,1)  [dot] (left) {};
	\node at (-2,3)  [dot] (left1) {};
	\node at (-2,5)  [dot] (left2) {};
	\node at (-2,7)  [dot] (left3) {};
	\node at (0,3) [dot] (right1) {};
	\node at (0,5) [var] (variable2) {};
	\node at (0,7) [var] (variable3) {};
	
	\draw[testfcnx] (left) to  (root);
	
	\draw[rho] (left1) to (left);
	\draw[kernel1] (left2) to (right1);
	\draw[kernel1] (left3) to (left2);
	\draw[kprime] (right1) to (root);
	\draw[rho] (right1) to (left1);
	\draw[rho] (variable3) to (left3); 
	\draw[rho] (variable2) to (left2); 
	\end{tikzpicture}
\;+\;
\begin{tikzpicture}[scale=0.35,baseline=0.5cm]
	\node at (0,-1)  [root] (root) {};
	\node at (-2,1)  [dot] (left) {};
	\node at (-2,3)  [dot] (left1) {};
	\node at (-2,5)  [dot] (left2) {};
	\node at (-2,7)  [dot] (left3) {};
	\node at (0,3) [var] (variable2) {};
	\node at (-3,3) [dot] (exleft) {};
	\node at (0,7) [var] (variable4) {};

	\draw[testfcn] (left) to  (root);
	
	\draw[kernel2] (left1) to (left);
	\draw[kernel1] (left2) to (left1);
	\draw[kernel1] (left3) to (left2);
	\draw[rho] (variable4) to (left3); 
	\draw[rho] (exleft) to (left); 
	\draw[rho] (exleft) to (left2);
	\draw[rho] (variable2) to (left1); 
\end{tikzpicture}
\;+\;
\begin{tikzpicture}[scale=0.35,baseline=0.5cm]
	\node at (0,-1)  [root] (root) {};
	\node at (-2,1)  [dot] (left) {};
	\node at (-2,3)  [dot] (left1) {};
	\node at (-2,5)  [dot] (left2) {};
	\node at (-2,7)  [dot] (left3) {};
	\node at (0,3) [var] (variable2) {};
	\node at (-3,4) [dot] (exleft) {};
	\node at (0,5) [var] (variable3) {};

	\draw[testfcn] (left) to  (root);
	
	\draw[kernel2] (left1) to (left);
	\draw[kernel1] (left2) to (left1);
	\draw[kernel1] (left3) to (left2);
	\draw[rho] (variable3) to (left2); 
	\draw[rho] (exleft) to (left); 
	\draw[rho] (exleft) to (left3);
	\draw[rho] (variable2) to (left1); 
\end{tikzpicture}
\\&
\;+\;
\begin{tikzpicture}[scale=0.35,baseline=0.5cm]
	\node at (0,-1)  [root] (root) {};
	\node at (-2,1)  [dot] (left) {};
	\node at (-2,3)  [dot] (left1) {};
	\node at (-2,5)  [dot] (left2) {};
	\node at (-2,7)  [dot] (left3) {};
	\node at (0,5) [dot] (variable2) {};
	\node at (0,7) [dot] (variable3) {};

	\draw[testfcn] (left) to  (root);
	
	\draw[rho] (left1) to (left);
	\draw[rho] (left2) to (left1);
	\draw[kernel1] (left3) to (left2);
	\draw[rho] (variable3) to (left3); 
	\draw[rho] (variable2) to (variable3); 
	\draw[kernel] (left2) to (root);
	\draw[kernel] (variable2) to (root); 
\end{tikzpicture}
\;+\;
\begin{tikzpicture}[scale=0.35,baseline=0.5cm]
	\node at (0,-1)  [root] (root) {};
	\node at (-2,1)  [dot] (left) {};
	\node at (-2,3)  [dot] (left1) {};
	\node at (-2,5)  [dot] (left2) {};
	\node at (-2,7)  [dot] (left3) {};
	\node at (0,5) [dot] (variable2) {};
	\node at (0,7) [dot] (variable3) {};

	\draw[testfcnx] (left) to  (root);
	
	\draw[rho] (left1) to (left);
	\draw[rho] (left2) to (left1);
	\draw[kernel1] (left3) to (left2);
	\draw[rho] (variable3) to (left3); 
	\draw[rho] (variable2) to (variable3); 
	\draw[kprime] (left2) to (root);
	\draw[kernel] (variable2) to (root); 
\end{tikzpicture}
\;-\;
\begin{tikzpicture}[scale=0.35,baseline=0.5cm]
	\node at (0,-1)  [root] (root) {};
	\node at (-2,1)  [dot] (left) {};
	\node at (-2,3)  [dot] (left1) {};
	\node at (-2,5)  [dot] (left2) {};
	\node at (0,3) [dot] (right2) {};
	\node at (0,5) [dot] (right3) {};

	\draw[testfcn] (left) to  (root);
	
	\draw[kernelBig] (left1) to (left);
	\draw[kernel1] (left2) to (left1);
	\draw[rho] (right3) to (left2); 
	\draw[rho] (right2) to (right3); 
	\draw[kernel] (right2) to (root);
\end{tikzpicture}
\;-\;
\begin{tikzpicture}[scale=0.35,baseline=0.5cm]
	\node at (0,-1)  [root] (root) {};
	\node at (-2,1)  [dot] (left) {};
	\node at (-2,3)  [dot] (left1) {};
	\node at (-2,5)  [dot] (left2) {};
	\node at (-2,7)  [dot] (left3) {};
	\node at (-4,4) [dot] (exleft) {};
	\node at (0,5) [dot] (right) {};
	
	\draw[testfcn] (left) to  (root);
	
	\draw[kernel2] (left1) to (left);
	\draw[kernel1] (left3) to (right);
	\draw[kernel] (right) to (root);
	\draw[rho] (left2) to (left1);
	\draw[rho] (left2) to (right);
	\draw[rho] (exleft) to (left); 
	\draw[rho] (exleft) to (left3);
\end{tikzpicture}
\\&
\;+\;
\begin{tikzpicture}[scale=0.35,baseline=0.5cm]
	\node at (0,-1)  [root] (root) {};
	\node at (-2,1)  [dot] (left) {};
	\node at (-2,3)  [dot] (left1) {};
	\node at (0,5)  [dot] (left2) {};
	\node at (0,7)  [dot] (left3) {};
	\node at (0,3) [dot] (right) {};
	\node at (-2,5) [dot] (exleft) {};
	
	\draw[testfcn] (left) to  (root);
	
	\draw[kernel2] (left1) to (left);
	\draw[kernel1] (left2) to (left1);
	\draw[kernel1] (left3) to (left2);
	\draw[rho] (right) to (left2); 
	\draw[rho] (exleft) to (left1); 
	\draw[rho] (exleft) to (left3);
	\draw[rho] (right) to (left); 
\end{tikzpicture}
\;-\;
\begin{tikzpicture}[scale=0.35,baseline=0.5cm]
	\node at (0,-1)  [root] (root) {};
	\node at (-2,1)  [dot] (left) {};
	\node at (-2,3)  [dot] (left1) {};
	\node at (0,5)  [dot] (left2) {};
	\node at (0,7)  [dot] (left3) {};
	\node at (0,3) [dot] (right) {};
	\node at (-2,5) [dot] (exleft) {};
	
	\draw[testfcn] (left) to  (root);
	
	\draw[kernel] (left1) to (left);
	\draw[kernel] (left2) to (left1);
	\draw[kernel] (left3) to (left2);
	\draw[rho] (right) to (left2); 
	\draw[rho] (exleft) to (left1); 
	\draw[rho] (exleft) to (left3);
	\draw[rho] (right) to (left); 
\end{tikzpicture}
\;+\;
\begin{tikzpicture}[scale=0.35,baseline=0.5cm]
	\node at (0,-1)  [root] (root) {};
	\node at (-2,1)  [dot] (left) {};
	\node at (0,3)  [dot] (left1) {};
	\node at (0,5)  [dot] (left2) {};
	\node at (-2,7)  [dot] (left3) {};
	\node at (-2,4) [dot] (exleft) {};
	
	\draw[testfcn] (left) to  (root);
	
	\draw[kernel2] (left1) to (left);
	\draw[kernelBig] (left2) to (left1);
	\draw[kernel1] (left3) to (left2);
	\draw[rho] (exleft) to (left); 
	\draw[rho] (exleft) to (left3);
\end{tikzpicture}
\;-\;
\begin{tikzpicture}[scale=0.35,baseline=0.5cm]
	\node at (0,-1)  [root] (root) {};
	\node at (-2,1)  [dot] (left) {};
	\node at (0,3)  [dot] (left1) {};
	\node at (0,5)  [dot] (left2) {};
	\node at (-2,7)  [dot] (left3) {};
	\node at (-2,4) [dot] (exleft) {};
	
	\draw[testfcn] (left) to  (root);
	
	\draw[kernel] (left1) to (left);
	\draw[kernelBig] (left2) to (left1);
	\draw[kernel] (left3) to (left2);
	\draw[rho] (exleft) to (left); 
	\draw[rho] (exleft) to (left3);
\end{tikzpicture} 
\;- E_\eps\;,
\end{equs}
where we used the shorthand  \tikz[baseline=-0.1cm] \draw[kprime] (0,0) -- (1,0);
for the kernel $K' = \d_x K$ and  \tikz[baseline=-0.1cm] \draw[testfcnx] (0,0) -- (1,0);
for the test function $(t,x) \mapsto x\phi^\lambda(t,x)$.
Note that if we set $\tilde \phi(t,x) = x \phi(t,x)$, then $\tilde \phi$ is again an 
admissible test function and one has $x\phi^\lambda(t,x) = \lambda \tilde \phi^\lambda(t,x)$.
As a consequence, when applying Theorem~\ref{theo:ultimate} 
to a graph with test function $\tilde \phi$,
one gains an additional power of $\lambda$. This however is exactly compensated by the fact 
that in this case one instance of the kernel $K$ is replaced by $K'$, thus lowering
the total homogeneity of the graph by one.

It is a lengthy but straightforward task to verify that each of the terms appearing on the
first three lines verify Assumption~\ref{ass:graph} when performing the 
``doubling'' procedure of Section~\ref{sec:labGraphs} and the
substitutions \eqref{e:subs} to turn them into labelled graphs,
so that the required bounds hold for them. In order to obtain analogous bounds for the terms
on the last line, one needs to exploit the fact that they create cancellations. More precisely,
we rewrite these terms as
\begin{equ}[e:termsFirstCancel]
\begin{tikzpicture}[scale=0.35,baseline=0.5cm]
	\node at (0,-1)  [root] (root) {};
	\node at (-2,1)  [dot] (left) {};
	\node at (-2,3)  [dot] (left1) {};
	\node at (0,5)  [dot] (left2) {};
	\node at (0,7)  [dot] (left3) {};
	\node at (0,3) [dot] (right) {};
	\node at (-2,5) [dot] (exleft) {};
	
	\draw[testfcn] (left) to  (root);
	
	\draw[kernel] (left1) to (left);
	\draw[kernel] (left2) to (left1);
	\draw[kernel] (left3) to (left2);
	\draw[rho] (right) to (left2); 
	\draw[rho] (exleft) to (left1); 
	\draw[rho] (exleft) to (left3);
	\draw[rho] (right) to (left); 
\end{tikzpicture}
\;-\;
\begin{tikzpicture}[scale=0.35,baseline=0.5cm]
	\node at (0,-1)  [root] (root) {};
	\node at (-2,1)  [dot] (left) {};
	\node at (-2,3)  [dot] (left1) {};
	\node at (0,5)  [dot] (left2) {};
	\node at (0,7)  [dot] (left3) {};
	\node at (0,3) [dot] (right) {};
	\node at (-2,5) [dot] (exleft) {};
	
	\draw[testfcn] (left) to  (root);
	
	\draw[kernel2] (left1) to (left);
	\draw[kernel1] (left2) to (left1);
	\draw[kernel1] (left3) to (left2);
	\draw[rho] (right) to (left2); 
	\draw[rho] (exleft) to (left1); 
	\draw[rho] (exleft) to (left3);
	\draw[rho] (right) to (left); 
\end{tikzpicture}
\;=\;
\begin{tikzpicture}[scale=0.35,baseline=0.5cm]
	\node at (-2,-1)  [root] (root) {};
	\node at (0,1)  [dot] (left) {};
	\node at (-2,3)  [dot] (left1) {};
	\node at (0,5)  [dot] (left2) {};
	\node at (0,7)  [dot] (left3) {};
	\node at (0,3) [dot] (right) {};
	\node at (-2,5) [dot] (exleft) {};
	
	\draw[testfcn] (left) to  (root);
	
	\draw[kernel] (left1) to (root);
	\draw[kernel1] (left2) to (left1);
	\draw[kernel1] (left3) to (left2);
	\draw[rho] (right) to (left2); 
	\draw[rho] (exleft) to (left1); 
	\draw[rho] (exleft) to (left3);
	\draw[rho] (right) to (left); 
\end{tikzpicture}
\;+\;
\begin{tikzpicture}[scale=0.35,baseline=0.5cm]
	\node at (-2,-1)  [root] (root) {};
	\node at (0,1)  [dot] (left) {};
	\node at (-2,3)  [dot] (left1) {};
	\node at (0,5)  [dot] (left2) {};
	\node at (0,7)  [dot] (left3) {};
	\node at (0,3) [dot] (right) {};
	\node at (-2,5) [dot] (exleft) {};
	
	\draw[testfcnx] (left) to  (root);
	
	\draw[kprime] (left1) to (root);
	\draw[kernel1] (left2) to (left1);
	\draw[kernel1] (left3) to (left2);
	\draw[rho] (right) to (left2); 
	\draw[rho] (exleft) to (left1); 
	\draw[rho] (exleft) to (left3);
	\draw[rho] (right) to (left); 
\end{tikzpicture}
\;+\;
\begin{tikzpicture}[scale=0.35,baseline=0.5cm]
	\node at (0,-1)  [root] (root) {};
	\node at (-2,1)  [dot] (left) {};
	\node at (-2,3)  [dot] (left1) {};
	\node at (0,5)  [dot] (left2) {};
	\node at (0,7)  [dot] (left3) {};
	\node at (-1,3) [dot] (right) {};
	\node at (-2,5) [dot] (exleft) {};
	
	\draw[testfcn] (left) to  (root);
	
	\draw[kernel] (left1) to (left);
	\draw[kernel] (left2) to (root);
	\draw[kernel1] (left3) to (left2);
	\draw[rho] (right) to (left2); 
	\draw[rho] (exleft) to (left1); 
	\draw[rho] (exleft) to (left3);
	\draw[rho] (right) to (left); 
\end{tikzpicture}
\;+\;
\begin{tikzpicture}[scale=0.35,baseline=0.5cm]
	\node at (0,-1)  [root] (root) {};
	\node at (-2,1)  [dot] (left) {};
	\node at (-2,3)  [dot] (left1) {};
	\node at (0,5)  [dot] (left2) {};
	\node at (0,7)  [dot] (left3) {};
	\node at (0,3) [dot] (right) {};
	\node at (-2,5) [dot] (exleft) {};
	
	\draw[testfcn] (left) to  (root);
	
	\draw[kernel] (left1) to (left);
	\draw[kernel] (left2) to (left1);
	\draw[kernel,bend left=40] (left3) to (root);
	\draw[rho] (right) to (left2); 
	\draw[rho] (exleft) to (left1); 
	\draw[rho] (exleft) to (left3);
	\draw[rho] (right) to (left); 
\end{tikzpicture}\;,
\end{equ}
as well as
\begin{equ}[e:termsSecondCancel]
\begin{tikzpicture}[scale=0.35,baseline=0.5cm]
	\node at (0,-1)  [root] (root) {};
	\node at (-2,1)  [dot] (left) {};
	\node at (0,3)  [dot] (left1) {};
	\node at (0,5)  [dot] (left2) {};
	\node at (-2,7)  [dot] (left3) {};
	\node at (-2,4) [dot] (exleft) {};
	
	\draw[testfcn] (left) to  (root);
	
	\draw[kernel] (left1) to (left);
	\draw[kernelBig] (left2) to (left1);
	\draw[kernel] (left3) to (left2);
	\draw[rho] (exleft) to (left); 
	\draw[rho] (exleft) to (left3);
\end{tikzpicture} 
\;-\;
\begin{tikzpicture}[scale=0.35,baseline=0.5cm]
	\node at (0,-1)  [root] (root) {};
	\node at (-2,1)  [dot] (left) {};
	\node at (0,3)  [dot] (left1) {};
	\node at (0,5)  [dot] (left2) {};
	\node at (-2,7)  [dot] (left3) {};
	\node at (-2,4) [dot] (exleft) {};
	
	\draw[testfcn] (left) to  (root);
	
	\draw[kernel2] (left1) to (left);
	\draw[kernelBig] (left2) to (left1);
	\draw[kernel1] (left3) to (left2);
	\draw[rho] (exleft) to (left); 
	\draw[rho] (exleft) to (left3);
\end{tikzpicture}
=
\begin{tikzpicture}[scale=0.35,baseline=0.5cm]
	\node at (0,-1)  [root] (root) {};
	\node at (-2,1)  [dot] (left) {};
	\node at (0,3)  [dot] (left1) {};
	\node at (0,5)  [dot] (left2) {};
	\node at (-2,7)  [dot] (left3) {};
	\node at (-2,4) [dot] (exleft) {};
	
	\draw[testfcn] (left) to  (root);
	
	\draw[kernel] (left1) to (root);
	\draw[kernelBig] (left2) to (left1);
	\draw[kernel] (left3) to (left2);
	\draw[rho] (exleft) to (left); 
	\draw[rho] (exleft) to (left3);
\end{tikzpicture}
\;+\;
\begin{tikzpicture}[scale=0.35,baseline=0.5cm]
	\node at (0,-1)  [root] (root) {};
	\node at (-2,1)  [dot] (left) {};
	\node at (0,3)  [dot] (left1) {};
	\node at (0,5)  [dot] (left2) {};
	\node at (-2,7)  [dot] (left3) {};
	\node at (-2,4) [dot] (exleft) {};
	
	\draw[testfcnx] (left) to  (root);
	
	\draw[kprime] (left1) to (root);
	\draw[kernelBig] (left2) to (left1);
	\draw[kernel] (left3) to (left2);
	\draw[rho] (exleft) to (left); 
	\draw[rho] (exleft) to (left3);
\end{tikzpicture}
\;.
\end{equ}
It is then possible to verify that each of these terms appearing in the right hand side of
\eqref{e:termsFirstCancel} as well as the first term appearing in the right hand side of
\eqref{e:termsSecondCancel} all separately give rise to graphs
satisfying Assumption~\ref{ass:graph}.
The second term appearing on the right hand side of \eqref{e:termsSecondCancel} fails Condition~3, 
but it can easily be dealt with ``by hand'': it simply consists of a convolution of (renormalized)
kernels of respective homogeneities $-3$, $-4$ and $-2$, tested against the test function $\lambda\tilde \phi^\lambda$.
These can easily be bounded by repeatedly applying \cite[Lem.~3.14-3.16]{Regularity}.

We now verify that one has
\begin{equ}
\bigl(\hat \Pi_0 \<Xi4>\bigr)(\phi) = \bigl(\hat \Pi_0 \<IXi3> \diamond \hat \Pi_0 \<Xi>\bigr)(\phi)\;,
\end{equ}
for $\phi$ with support in $\R_+ \times S^1$.
Similarly to before, the right hand side is obtained from $\bigl(\hat \Pi_0 \<Xi3>\bigr)(\phi)$
by  replacing 
\tikz[baseline=-0.1cm] \draw[testfcn] (0,0) to (1,0);
with 
\tikz[baseline=-0.1cm]{\node at (0,0)  [var] (root) {}; \draw[testfcn] (root) to (1,0);\draw[kernel2] (-1,0) to (root);} in its pictorial representation.
It then follows immediately from \eqref{e:exprXi3} that this yields
precisely the first four terms in \eqref{e:exprXi4}.
The first three terms on the second line of \eqref{e:exprXi4} contain a factor 
\tikz[baseline=-0.1cm] \draw[kernelBig] (0,0) to (1,0); and satisfy the assumptions 
of Theorem~\ref{theo:ultimate}, so they vanish as $\eps \to 0$ for any test function
as a consequence of \eqref{e:zigzag}.
It remains to show that all remaining terms
vanish as $\eps \to 0$ when the test function $\phi$ is supported in the future.
For the remaining terms on the second and third lines, this 
can be verified by a systematic use of the argument following \eqref{e:explodeloop}.
Regarding the two differences appearing on the last line, they can again be treated by
the same arguments. Note however that it is crucial here to make use of the cancellations
appearing there: the individual terms on the last line do in general \textit{not}
converge to $0$!

\subsubsection[Term Xi4e]{Convergence for the symbol \texorpdfstring{$\<Xi4e>$}{Xi4e}}

In this case, we note that similarly to the previous case one can write
\begin{equ}
c_\rho^{(2,1)} + c_\rho^{(2,2)} = 
\begin{tikzpicture}[baseline=10,scale=0.5]
\node at (1,2) [dot] (1) {};
\node at (-1,2) [dot] (2) {};
\node at (1,0) [dot] (3) {};
\node at (1,1) [dot] (3b) {};
\node at (-1,0) [root] (4) {};
\node at (-1,1) [dot] (4b) {};

\draw[kernel] (1) to (2);
\draw[kernel] (2) to (3);
\draw[kernel] (4) to (3);

\draw[rho] (1) to (3b);
\draw[rho] (3b) to (3);
\draw[rho] (2) to (4b);
\draw[rho] (4b) to (4);
\end{tikzpicture}
\;+\;
\begin{tikzpicture}[baseline=10,scale=0.5]
\node at (-1,2) [dot] (1) {};
\node at (1,2) [dot] (2) {};
\node at (-1,0) [root] (3) {};
\node at (-1,1) [dot] (3b) {};
\node at (1,0) [dot] (4) {};

\draw[kernel] (1) to (2);
\draw[kernel] (3) to (4);
\draw[kernelBig] (2) to (4);

\draw[rho] (1) to (3b);
\draw[rho] (3b) to (3);
\end{tikzpicture}
\;+E_\eps\;,
\end{equ}
for some error $E_\eps$ with $\lim_{\eps \to 0}E_\eps = 0$. With this identity at hand, 
we then obtain
\begin{equs}  
\bigl(\hat \Pi_0^{(\eps)} \<Xi4e>\bigr)(\phi_\lambda) &= \; 
\begin{tikzpicture}[scale=0.35,baseline=0.4cm]
	\node at (0,-1)  [root] (root) {};
	\node at (0,1)  [dot] (root2) {};
	\node at (-1.5,2.5)  [dot] (left) {};
	\node at (1.5,2.5)  [dot] (right) {};
	\node at (0,2.5) [var] (variable) {};
	\node at (-1.5,4) [var] (variablel) {};
	\node at (1.5,4) [var] (variabler) {};
	\node at (0,4) [dot] (top) {};
	\node at (0,5.5) [var] (variablet) {};
	
	\draw[testfcn] (root2) to  (root);
	
	\draw[kernel1] (left) to (root2);
	\draw[kernel1] (right) to (root2);
	\draw[kernel1] (top) to (left);
	\draw[rho] (variable) to (root2); 
	\draw[rho] (variablel) to (left); 
	\draw[rho] (variabler) to (right); 
	\draw[rho] (variablet) to (top); 
\end{tikzpicture}
\;+\;
\begin{tikzpicture}[scale=0.35,baseline=0.4cm]
	\node at (0,-1)  [root] (root) {};
	\node at (0,1)  [dot] (root2) {};
	\node at (-1.5,2.5)  [dot] (left) {};
	\node at (1.5,2.5)  [dot] (right) {};
	\node at (0,2.5) [dot] (variable) {};
	\node at (-1.5,4) [var] (variablel) {};
	\node at (1.5,4) [var] (variabler) {};
	\node at (0,4) [dot] (top) {};
	
	\draw[testfcn] (root2) to  (root);
	
	\draw[kernel1] (left) to (root2);
	\draw[kernel1] (right) to (root2);
	\draw[kernel1] (top) to (left);
	\draw[rho] (variable) to (root2); 
	\draw[rho] (variablel) to (left); 
	\draw[rho] (variabler) to (right); 
	\draw[rho] (top) to (variable); 
\end{tikzpicture}
\;+\;
\begin{tikzpicture}[scale=0.35,baseline=0.4cm]
	\node at (0,-1)  [root] (root) {};
	\node at (0,1)  [dot] (root2) {};
	\node at (-1.5,2.5)  [dot] (left) {};
	\node at (1.5,2.5)  [dot] (right) {};
	\node at (0,2.5) [dot] (center) {};
	\node at (-1.5,1) [var] (variable) {};
	\node at (0,4) [dot] (top) {};
	\node at (0,5.5) [var] (variablet) {};
	
	\draw[testfcn] (root2) to  (root);
	
	\draw[kernel1] (left) to (root2);
	\draw[kernel1] (right) to (root2);
	\draw[kernel1] (top) to (left);
	\draw[rho] (variable) to (root2); 
	\draw[rho] (variablet) to (top); 
	\draw[rho] (left) to (center); 
	\draw[rho] (right) to (center); 
\end{tikzpicture}
\;+\;
\begin{tikzpicture}[scale=0.35,baseline=0.4cm]
	\node at (0,-1)  [root] (root) {};
	\node at (0,1)  [dot] (root2) {};
	\node at (-1.5,2.5)  [dot] (left) {};
	\node at (1.5,2.5)  [dot] (right) {};
	\node at (0,2.5) [var] (variable) {};
	\node at (-1.5,4) [var] (variablel) {};
	\node at (0,4) [dot] (top) {};
	\node at (0.75,3.25) [dot] (center) {};
	
	\draw[testfcn] (root2) to  (root);
	
	\draw[kernel1] (left) to (root2);
	\draw[kernel1] (right) to (root2);
	\draw[kernel1] (top) to (left);
	\draw[rho] (variable) to (root2); 
	\draw[rho] (variablel) to (left); 
	\draw[rho] (center) to (right); 
	\draw[rho] (center) to (top); 
\end{tikzpicture}
\;-\;
\begin{tikzpicture}[scale=0.35,baseline=0.4cm]
	\node at (0,-1)  [root] (root) {};
	\node at (0,1)  [dot] (root2) {};
	\node at (-1.5,2.5)  [dot] (left) {};
	\node at (1.5,2.5)  [dot] (right) {};
	\node at (0.75,1.75) [dot] (center) {};
	\node at (-1.5,4) [var] (variablel) {};
	\node at (0,4) [dot] (top) {};
	\node at (0,5.5) [var] (variablet) {};
	
	\draw[testfcn] (root2) to  (root);
	
	\draw[kernel1] (left) to (root2);
	\draw[kernel,bend left=20] (right) to (root);
	\draw[kernel1] (top) to (left);
	\draw[rho] (center) to (root2); 
	\draw[rho] (variablel) to (left); 
	\draw[rho] (center) to (right); 
	\draw[rho] (variablet) to (top); 
\end{tikzpicture}
\\&
\;-\;
\begin{tikzpicture}[scale=0.35,baseline=0.4cm]
	\node at (0,-1)  [root] (root) {};
	\node at (0,1)  [dot] (root2) {};
	\node at (-1.5,2.5)  [dot] (left) {};
	\node at (1.5,2.5)  [dot] (right) {};
	\node at (0,2.5) [var] (variable) {};
	\node at (1.5,4) [var] (variabler) {};
	\node at (-1.5,4.5) [dot] (top) {};
	\node at (-1.5,3.5) [dot] (center) {};
	
	\draw[testfcn] (root2) to  (root);
	
	\draw[kernel1] (left) to (root2);
	\draw[kernel1] (right) to (root2);
	\draw[kernel,bend right = 40] (top) to (root);
	\draw[rho] (variable) to (root2); 
	\draw[rho] (center) to (left); 
	\draw[rho] (variabler) to (right); 
	\draw[rho] (center) to (top); 
\end{tikzpicture}
\;+\;
\begin{tikzpicture}[scale=0.35,baseline=0.4cm]
	\node at (0,-1)  [root] (root) {};
	\node at (0,1)  [dot] (root2) {};
	\node at (-1.5,2.5)  [dot] (left) {};
	\node at (1.5,2.5)  [dot] (right) {};
	\node at (1.5,4) [var] (variabler) {};
	\node at (0,4) [dot] (top) {};
	\node at (0,5.5) [var] (variablet) {};
	
	\draw[testfcn] (root2) to  (root);
	
	\draw[kernelBig] (left) to (root2);
	\draw[kernel1] (right) to (root2);
	\draw[kernel1] (top) to (left);
	\draw[rho] (variablet) to (top); 
	\draw[rho] (variabler) to (right); 
\end{tikzpicture}
\;-\;
\begin{tikzpicture}[scale=0.35,baseline=0.4cm]
	\node at (0,-1)  [root] (root) {};
	\node at (0,1)  [dot] (root2) {};
	\node at (-1.5,2.5)  [dot] (left) {};
	\node at (-0.75,1.75)  [dot] (center) {};
	\node at (1.5,2.5)  [dot] (right) {};
	\node at (1.5,4) [var] (variabler) {};
	\node at (0,4) [dot] (top) {};
	\node at (0,5.5) [var] (variablet) {};
	
	\draw[testfcn] (root2) to  (root);
	
	\draw[rho] (left) to (center);
	\draw[rho] (center) to (root2);
	\draw[kernel1] (right) to (root2);
	\draw[kernel1] (top) to (left);
	\draw[rho] (variablet) to (top); 
	\draw[rho] (variabler) to (right); 
	\draw[kernel,bend right=20] (left) to (root);
\end{tikzpicture}
\;+\;
\begin{tikzpicture}[scale=0.35,baseline=0.4cm]
	\node at (0,-1)  [root] (root) {};
	\node at (0,1)  [dot] (root2) {};
	\node at (0,3)  [dot] (left) {};
	\node at (1.5,2.5)  [dot] (right) {};
	\node at (0.75,1.75) [dot] (center) {};
	\node at (-0.5,4.5)  [dot] (cl) {};
	\node at (-1,3)  [dot] (ll) {};
	
	\draw[testfcn] (root2) to  (root);
	
	\draw[kernel1] (left) to (root2);
	\draw[kernel,bend left=20] (right) to (root);

	\draw[kernel,bend right=20] (ll) to (root);
	\draw[rho] (left) to (cl); 
	\draw[rho] (ll) to (cl); 
	\draw[rho] (center) to (root2); 
	\draw[rho] (center) to (right); 
\end{tikzpicture}
\;-\;
\begin{tikzpicture}[scale=0.35,baseline=0.4cm]
	\node at (0,-1)  [root] (root) {};
	\node at (0,1)  [dot] (root2) {};
	\node at (-1.5,2.5)  [dot] (left) {};
	\node at (1.5,2.5)  [dot] (right) {};
	\node at (0,4) [dot] (top) {};
	\node at (0.75,3.25) [dot] (center) {};
	\node at (-0.75,1.75) [dot] (cl) {};
	
	\draw[testfcn] (root2) to  (root);
	
	\draw[kernel,bend right=20] (left) to (root);
	\draw[kernel1] (right) to (root2);
	\draw[kernel1] (top) to (left);
	\draw[rho] (cl) to (root2); 
	\draw[rho] (cl) to (left); 
	\draw[rho] (center) to (right); 
	\draw[rho] (center) to (top); 
\end{tikzpicture}
\\&
\;+\;
\begin{tikzpicture}[scale=0.35,baseline=0.4cm]
	\node at (0,-1)  [root] (root) {};
	\node at (0,1)  [dot] (root2) {};
	\node at (-1.5,2.5)  [dot] (left) {};
	\node at (1.5,2.5)  [dot] (right) {};
	\node at (0,4) [dot] (top) {};
	\node at (0.75,3.25) [dot] (center) {};
	
	\draw[testfcn] (root2) to  (root);
	
	\draw[kernel1] (right) to (root2);
	\draw[kernel1] (top) to (left);
	\draw[kernelBig] (left) to (root2); 
	\draw[rho] (center) to (right); 
	\draw[rho] (center) to (top); 
\end{tikzpicture}
\;-\;
\begin{tikzpicture}[scale=0.35,baseline=0.4cm]
	\node at (0,-1)  [root] (root) {};
	\node at (0,1)  [dot] (root2) {};
	\node at (-1.5,2.5)  [dot] (left) {};
	\node at (1.5,2.5)  [dot] (right) {};
	\node at (0,4) [dot] (top) {};
	\node at (0.75,3.25) [dot] (center) {};
	
	\draw[testfcn] (root2) to  (root);
	
	\draw[kernel] (right) to (root2);
	\draw[kernel] (top) to (left);
	\draw[kernelBig] (left) to (root2); 
	\draw[rho] (center) to (right); 
	\draw[rho] (center) to (top); 
\end{tikzpicture}
\;+\;
\begin{tikzpicture}[scale=0.35,baseline=0.4cm]
	\node at (0,-1)  [root] (root) {};
	\node at (0,1)  [dot] (root2) {};
	\node at (-1.5,2.5)  [dot] (left) {};
	\node at (1.5,2.5)  [dot] (right) {};
	\node at (0,3) [dot] (top) {};
	\node at (0,4) [dot] (center) {};
	\node at (0,2) [dot] (cc) {};
	
	\draw[testfcn] (root2) to  (root);
	
	\draw[rho] (cc) to (root2); 
	\draw[rho] (cc) to (top); 

	\draw[kernel1] (right) to (root2);
	\draw[kernel1] (top) to (left);
	\draw[kernel1] (left) to (root2); 
	\draw[rho] (center) to (right); 
	\draw[rho] (center) to (left); 
\end{tikzpicture}
\;-\;
\begin{tikzpicture}[scale=0.35,baseline=0.4cm]
	\node at (0,-1)  [root] (root) {};
	\node at (0,1)  [dot] (root2) {};
	\node at (-1.5,2.5)  [dot] (left) {};
	\node at (1.5,2.5)  [dot] (right) {};
	\node at (0,3) [dot] (top) {};
	\node at (0,4) [dot] (center) {};
	\node at (0,2) [dot] (cc) {};
	
	\draw[testfcn] (root2) to  (root);
	
	\draw[rho] (cc) to (root2); 
	\draw[rho] (cc) to (top); 

	\draw[kernel] (right) to (root2);
	\draw[kernel] (top) to (left);
	\draw[kernel] (left) to (root2); 
	\draw[rho] (center) to (right); 
	\draw[rho] (center) to (left); 
\end{tikzpicture}
\;-\; E_\eps\;.\label{e:exprXi4c}
\end{equs}
Again, each term appearing on the first two lines in this expression gives rise
to a labelled graph satisfying 
Assumption~\ref{ass:graph}. The terms appearing on the last line require
us again to make use of cancellations similarly to what we did for the last two terms 
appearing in the expression for $\bigl(\hat \Pi_0^{(\eps)} \<Xi4>\bigr)(\phi_\lambda)$.
This time, we use the identities
\begin{equs}[e:someMessyStuff]
\begin{tikzpicture}[scale=0.35,baseline=0.4cm]
	\node at (0,-1)  [root] (root) {};
	\node at (0,1)  [dot] (root2) {};
	\node at (-1.5,2.5)  [dot] (left) {};
	\node at (1.5,2.5)  [dot] (right) {};
	\node at (0,4) [dot] (top) {};
	\node at (0.75,3.25) [dot] (center) {};
	
	\draw[testfcn] (root2) to  (root);
	
	\draw[kernel1] (right) to (root2);
	\draw[kernel1] (top) to (left);
	\draw[kernelBig] (left) to (root2); 
	\draw[rho] (center) to (right); 
	\draw[rho] (center) to (top); 
\end{tikzpicture}
\;-\;
\begin{tikzpicture}[scale=0.35,baseline=0.4cm]
	\node at (0,-1)  [root] (root) {};
	\node at (0,1)  [dot] (root2) {};
	\node at (-1.5,2.5)  [dot] (left) {};
	\node at (1.5,2.5)  [dot] (right) {};
	\node at (0,4) [dot] (top) {};
	\node at (0.75,3.25) [dot] (center) {};
	
	\draw[testfcn] (root2) to  (root);
	
	\draw[kernel] (right) to (root2);
	\draw[kernel] (top) to (left);
	\draw[kernelBig] (left) to (root2); 
	\draw[rho] (center) to (right); 
	\draw[rho] (center) to (top); 
\end{tikzpicture}
\;&=\;-\;\begin{tikzpicture}[scale=0.35,baseline=0.4cm]
	\node at (0,-1)  [root] (root) {};
	\node at (0,1)  [dot] (root2) {};
	\node at (0,3)  [dot] (left) {};
	\node at (2,1)  [dot] (right) {};
	\node at (2,3) [dot] (top) {};
	\node at (2,2) [dot] (center) {};
	
	\draw[testfcn] (root2) to  (root);
	
	\draw[kernel] (right) to (root);
	\draw[kernel] (top) to (left);
	\draw[kernelBig] (left) to (root2); 
	\draw[rho] (center) to (right); 
	\draw[rho] (center) to (top); 
\end{tikzpicture}\;,\\
\begin{tikzpicture}[scale=0.35,baseline=0.4cm]
	\node at (0,-1)  [root] (root) {};
	\node at (0,1)  [dot] (root2) {};
	\node at (-1.5,2.5)  [dot] (left) {};
	\node at (1.5,2.5)  [dot] (right) {};
	\node at (0,3) [dot] (top) {};
	\node at (0,4) [dot] (center) {};
	\node at (0,2) [dot] (cc) {};
	
	\draw[testfcn] (root2) to  (root);
	
	\draw[rho] (cc) to (root2); 
	\draw[rho] (cc) to (top); 

	\draw[kernel] (right) to (root2);
	\draw[kernel] (top) to (left);
	\draw[kernel] (left) to (root2); 
	\draw[rho] (center) to (right); 
	\draw[rho] (center) to (left); 
\end{tikzpicture}
\;-\;
\begin{tikzpicture}[scale=0.35,baseline=0.4cm]
	\node at (0,-1)  [root] (root) {};
	\node at (0,1)  [dot] (root2) {};
	\node at (-1.5,2.5)  [dot] (left) {};
	\node at (1.5,2.5)  [dot] (right) {};
	\node at (0,3) [dot] (top) {};
	\node at (0,4) [dot] (center) {};
	\node at (0,2) [dot] (cc) {};
	
	\draw[testfcn] (root2) to  (root);
	
	\draw[rho] (cc) to (root2); 
	\draw[rho] (cc) to (top); 

	\draw[kernel1] (right) to (root2);
	\draw[kernel1] (top) to (left);
	\draw[kernel1] (left) to (root2); 
	\draw[rho] (center) to (right); 
	\draw[rho] (center) to (left); 
\end{tikzpicture}
\;&=\;
\begin{tikzpicture}[scale=0.35,baseline=0.4cm]
	\node at (0,-1)  [root] (root) {};
	\node at (0,1)  [dot] (root2) {};
	\node at (-1.5,2.5)  [dot] (left) {};
	\node at (1.5,2.5)  [dot] (right) {};
	\node at (0,3) [dot] (top) {};
	\node at (0,4) [dot] (center) {};
	\node at (0,2) [dot] (cc) {};
	
	\draw[testfcn] (root2) to  (root);
	
	\draw[rho] (cc) to (root2); 
	\draw[rho] (cc) to (top); 

	\draw[kernel1] (right) to (root2);
	\draw[kernel,bend left=60] (top) to (root);
	\draw[kernel1] (left) to (root2); 
	\draw[rho] (center) to (right); 
	\draw[rho] (center) to (left); 
\end{tikzpicture}
\;+\;
\begin{tikzpicture}[scale=0.35,baseline=0.4cm]
	\node at (0,-1)  [root] (root) {};
	\node at (0,1)  [dot] (root2) {};
	\node at (-1.5,2.5)  [dot] (left) {};
	\node at (1.5,2.5)  [dot] (right) {};
	\node at (0,3) [dot] (top) {};
	\node at (0,4) [dot] (center) {};
	\node at (0,2) [dot] (cc) {};
	
	\draw[testfcn] (root2) to  (root);
	
	\draw[rho] (cc) to (root2); 
	\draw[rho] (cc) to (top); 

	\draw[kernel1] (right) to (root2);
	\draw[kernel] (top) to (left);
	\draw[kernel] (left) to (root); 
	\draw[rho] (center) to (right); 
	\draw[rho] (center) to (left); 
\end{tikzpicture}
\;+\;
\begin{tikzpicture}[scale=0.35,baseline=0.4cm]
	\node at (0,-1)  [root] (root) {};
	\node at (0,1)  [dot] (root2) {};
	\node at (-1.5,2.5)  [dot] (left) {};
	\node at (1.5,2.5)  [dot] (right) {};
	\node at (0,3) [dot] (top) {};
	\node at (0,4) [dot] (center) {};
	\node at (0,2) [dot] (cc) {};
	
	\draw[testfcn] (root2) to  (root);
	
	\draw[rho] (cc) to (root2); 
	\draw[rho] (cc) to (top); 

	\draw[kernel] (right) to (root);
	\draw[kernel] (top) to (left);
	\draw[kernel] (left) to (root2); 
	\draw[rho] (center) to (right); 
	\draw[rho] (center) to (left); 
\end{tikzpicture}
\end{equs}
Similarly to before, the term appearing on the first line does not satisfy the assumptions
of Theorem~\ref{theo:ultimate} but can again be dealt with by repeatedly invoking \cite[Lem.~3.14-3.16]{Regularity}.
All the terms appearing on the right hand side of the second line on the other hand
do give rise to labelled graphs satisfying Assumption~\ref{ass:graph}.

If the test function $\phi$ is supported in the future, then the limit of the right
hand side of \eqref{e:exprXi4c} as $\eps \to 0$ is given by
\begin{equs}
\bigl(\hat \Pi_0 \<Xi4e>\bigr)(\phi_\lambda) &= \; 
\begin{tikzpicture}[scale=0.35,baseline=0.4cm]
	\node at (0,-1)  [root] (root) {};
	\node at (0,1)  [var] (root2) {};
	\node at (-1.5,2.5)  [var] (left) {};
	\node at (1.5,2.5)  [var] (right) {};
	\node at (0,4) [var] (top) {};
	
	\draw[testfcn] (root2) to  (root);
	
	\draw[kernel1] (left) to (root2);
	\draw[kernel1] (right) to (root2);
	\draw[kernel1] (top) to (left);
\end{tikzpicture}
\;+\;
\begin{tikzpicture}[scale=0.35,baseline=0.4cm]
	\node at (0,-1)  [root] (root) {};
	\node at (0,1)  [var] (root2) {};
	\node at (0,3) [dot] (top) {};
	\node at (0,5) [var] (ttop) {};
	
	\draw[testfcn] (root2) to  (root);
	
	\draw[kernel1,bend left=60] (top) to (root2);
	\draw[kernel1,bend right=60] (top) to (root2);
	\draw[kernel1] (ttop) to (top);
\end{tikzpicture}
\;+\;
\begin{tikzpicture}[scale=0.35,baseline=0.4cm]
	\node at (0,-1)  [root] (root) {};
	\node at (0,1)  [var] (root2) {};
	\node at (-1,3) [var] (top) {};
	\node at (1,3) [dot] (ttop) {};
	
	\draw[testfcn] (root2) to  (root);
	
	\draw[kernel1] (top) to (root2);
	\draw[kernel1] (ttop) to (root2);
	\draw[kernel1] (ttop) to (top);
\end{tikzpicture}
\;-\;
\begin{tikzpicture}[scale=0.35,baseline=0.4cm]
	\node at (0,-1)  [root] (root) {};
	\node at (0,1)  [var] (root2) {};
	\node at (-1,3) [var] (top) {};
	\node at (1.5,3) [dot] (ttop) {};
	
	\draw[testfcn] (root2) to  (root);
	
	\draw[kernel1] (top) to (root2);
	\draw[kernel,bend left=20] (ttop) to (root);
	\draw[kernel1] (ttop) to (root2);
\end{tikzpicture}\;.
\end{equs}
As before, it is now a straightforward task to verify that this is indeed
equal to $\bigl(\hat \Pi_0 \<Xi4e'> \diamond \hat \Pi_0 \<Xi>\bigr)(\phi_\lambda)$ as required.

\subsubsection[Term Xi4b]{Convergence for the symbol \texorpdfstring{$\<Xi4b>$}{Xi4b}}

This time, one obtains the identity
\begin{equ}  
\bigl(\hat \Pi_0^{(\eps)} \<Xi4b>\bigr)(\phi_\lambda) = \; 
\begin{tikzpicture}[scale=0.35,baseline=0.4cm]
	\node at (0,-1)  [root] (root) {};
	\node at (0,1)  [dot] (root2) {};
	\node at (-1.5,2.5)  [dot] (left) {};
	\node at (1.5,2.5)  [dot] (right) {};
	\node at (0,3) [dot] (middle) {};
	\node at (-1.5,4) [var] (variablel) {};
	\node at (1.5,4) [var] (variabler) {};
	\node at (0,4.5) [var] (variablem) {};
	
	\node at (-1.5,1) [var] (variable) {};

	\draw[testfcn] (root2) to  (root);
	
	\draw[kernel1] (left) to (root2);
	\draw[kernel1] (right) to (root2);
	\draw[kernel1] (middle) to (root2);
	\draw[rho] (variable) to (root2); 
	\draw[rho] (variablel) to (left); 
	\draw[rho] (variabler) to (right); 
	\draw[rho] (variablem) to (middle); 
\end{tikzpicture}
\;+3\;
\begin{tikzpicture}[scale=0.35,baseline=0.4cm]
	\node at (0,-1)  [root] (root) {};
	\node at (0,1)  [dot] (root2) {};
	\node at (-1.5,2.5)  [dot] (left) {};
	\node at (1.5,2.5)  [dot] (right) {};
	\node at (0,3) [dot] (middle) {};
	\node at (-1.5,4) [var] (variablel) {};
	\node at (0.85,4) [dot] (contr) {};
	
	\node at (-1.5,1) [var] (variable) {};

	\draw[testfcn] (root2) to  (root);
	
	\draw[kernel1] (left) to (root2);
	\draw[kernel1] (right) to (root2);
	\draw[kernel1] (middle) to (root2);
	\draw[rho] (variable) to (root2); 
	\draw[rho] (variablel) to (left); 
	\draw[rho] (contr) to (right); 
	\draw[rho] (contr) to (middle); 
\end{tikzpicture}
\;-3\;
\begin{tikzpicture}[scale=0.35,baseline=0.4cm]
	\node at (0,-1)  [root] (root) {};
	\node at (0,1)  [dot] (root2) {};
	\node at (-0.75,2)  [dot] (leftc) {};
	\node at (-1.55,1)  [dot] (left) {};
	\node at (1.5,2.5)  [dot] (right) {};
	\node at (0,3) [dot] (middle) {};
	\node at (1.5,4) [var] (variabler) {};
	\node at (0,4.5) [var] (variablem) {};
	
	\node at (-1.5,1) [dot] (variable) {};

	\draw[testfcn] (root2) to  (root);
	
	\draw[kernel] (left) to (root);
	\draw[kernel1] (right) to (root2);
	\draw[kernel1] (middle) to (root2);
	\draw[rho] (leftc) to (root2); 
	\draw[rho] (leftc) to (left); 
	\draw[rho] (variabler) to (right); 
	\draw[rho] (variablem) to (middle); 
\end{tikzpicture}
\;+3\;
\begin{tikzpicture}[scale=0.35,baseline=0.4cm]
	\node at (0,-1)  [root] (root) {};
	\node at (0,1)  [dot] (root2) {};
	\node at (-0.75,2)  [dot] (leftc) {};
	\node at (-1.55,1)  [dot] (left) {};
	\node at (1.5,2.5)  [dot] (right) {};
	\node at (0,3) [dot] (middle) {};
	\node at (0.85,4) [dot] (contr) {};
	
	\node at (-1.5,1) [dot] (variable) {};

	\draw[testfcn] (root2) to  (root);
	
	\draw[kernel] (left) to (root);
	\draw[kernel1] (right) to (root2);
	\draw[kernel1] (middle) to (root2);
	\draw[rho] (leftc) to (root2); 
	\draw[rho] (leftc) to (left); 
	\draw[rho] (contr) to (right); 
	\draw[rho] (contr) to (middle); 
\end{tikzpicture}
\;.
\end{equ}
This time, each term generates a graph satisfying Assumption~\ref{ass:graph}, so that the
required bounds and convergence hold. When testing against
a test function that is supported in the future, the limit as $\eps \to 0$ of
this expression becomes
\begin{equ}  
\bigl(\hat \Pi_0^{(\eps)} \<Xi4b>\bigr)(\phi) = \; 
\begin{tikzpicture}[scale=0.35,baseline=0.25cm]
	\node at (0,-1)  [root] (root) {};
	\node at (0,1)  [var] (root2) {};
	\node at (-1.5,2.5)  [var] (left) {};
	\node at (1.5,2.5)  [var] (right) {};
	\node at (0,3) [var] (middle) {};
	
	\draw[testfcn] (root2) to  (root);
	
	\draw[kernel1] (left) to (root2);
	\draw[kernel1] (right) to (root2);
	\draw[kernel1] (middle) to (root2);
\end{tikzpicture}
\;+3\;
\begin{tikzpicture}[scale=0.35,baseline=0.25cm]
	\node at (0,-1)  [root] (root) {};
	\node at (0,1)  [var] (root2) {};
	\node at (-2,1)  [var] (left) {};
	\node at (0,3)  [dot] (contr) {};

	\draw[testfcn] (root2) to  (root);
	
	\draw[kernel1] (left) to (root2);
	\draw[kernel1,bend left=60] (contr) to (root2);
	\draw[kernel1,bend right=60] (contr) to (root2);
\end{tikzpicture}\;.
\end{equ}
As before, one can easily verify that this is equal to 
$\bigl(\hat \Pi_0 \<Xi4b'>\diamond \hat \Pi_0 \<Xi>\bigr)(\phi_\lambda)$, so that \eqref{e:Itointegral} is verified.

\subsubsection[Term Xi4c]{Convergence for the symbol \texorpdfstring{$\<Xi4c>$}{Xi4c}}

For this last term, we have
\begin{equs}  
\bigl(\hat \Pi_0^{(\eps)} \<Xi4c>\bigr)(\phi_\lambda) &= \; 
\begin{tikzpicture}[scale=0.35,baseline=0.4cm]
	\node at (0,-1)  [root] (root) {};
	\node at (0,1)  [dot] (root2) {};
	\node at (0,3)  [dot] (middle) {};
	\node at (0,5)  [dot] (top) {};
	\node at (-2,3)  [dot] (left) {};

	\node at (-2,5)  [var] (varl) {};
	\node at (2,5)  [var] (vart) {};
	\node at (2,3)  [var] (varm) {};
	\node at (2,1)  [var] (varr) {};
	\draw[testfcn] (root2) to  (root);
	
	\draw[kernel2] (middle) to (root2);
	\draw[kernel1] (left) to (middle);
	\draw[kernel1] (top) to (middle);
	\draw[rho] (varr) to (root2); 
	\draw[rho] (varl) to (left); 
	\draw[rho] (vart) to (top); 
	\draw[rho] (varm) to (middle); 
\end{tikzpicture}
\;-2\;
\begin{tikzpicture}[scale=0.35,baseline=0.4cm]
	\node at (0,-1)  [root] (root) {};
	\node at (0,1)  [dot] (root2) {};
	\node at (0,3)  [dot] (middle) {};
	\node at (0,5)  [dot] (top) {};
	\node at (-2,1)  [dot] (left) {};
	\node at (-1,2)  [dot] (leftc) {};

	\node at (2,5)  [var] (vart) {};
	\node at (2,1)  [var] (varr) {};
	\draw[testfcn] (root2) to  (root);
	
	\draw[kernel2] (middle) to (root2);
	\draw[kernel] (left) to (root);
	\draw[kernel1] (top) to (middle);
	\draw[rho] (varr) to (root2); 
	\draw[rho] (leftc) to (left); 
	\draw[rho] (vart) to (top); 
	\draw[rho] (leftc) to (middle); 
\end{tikzpicture}
\;+\;
\begin{tikzpicture}[scale=0.35,baseline=0.4cm]
	\node at (0,-1)  [root] (root) {};
	\node at (0,1)  [dot] (root2) {};
	\node at (0,3)  [dot] (middle) {};
	\node at (0,5)  [dot] (top) {};
	\node at (-2,3)  [dot] (left) {};

	\node at (-2,5)  [dot] (centre) {};
	\node at (2,3)  [var] (varm) {};
	\node at (2,1)  [var] (varr) {};
	\draw[testfcn] (root2) to  (root);
	
	\draw[kernel2] (middle) to (root2);
	\draw[kernel1] (left) to (middle);
	\draw[kernel1] (top) to (middle);
	\draw[rho] (varr) to (root2); 
	\draw[rho] (centre) to (left); 
	\draw[rho] (centre) to (top); 
	\draw[rho] (varm) to (middle); 
\end{tikzpicture}
\\&
\;+2\;
\begin{tikzpicture}[scale=0.35,baseline=0.4cm]
	\node at (0,-1)  [root] (root) {};
	\node at (0,1)  [dot] (root2) {};
	\node at (0,3)  [dot] (middle) {};
	\node at (0,5)  [dot] (top) {};
	\node at (-2,3)  [dot] (left) {};

	\node at (-2,1)  [dot] (center) {};
	\node at (2,5)  [var] (vart) {};
	\node at (2,3)  [var] (varm) {};
	\draw[testfcn] (root2) to  (root);
	
	\draw[kernel2] (middle) to (root2);
	\draw[kernel1] (left) to (middle);
	\draw[kernel1] (top) to (middle);
	\draw[rho] (center) to (root2); 
	\draw[rho] (center) to (left); 
	\draw[rho] (vart) to (top); 
	\draw[rho] (varm) to (middle); 
\end{tikzpicture}
\;+\;
\begin{tikzpicture}[scale=0.35,baseline=0.4cm]
	\node at (0,-1)  [root] (root) {};
	\node at (0,1)  [dot] (root2) {};
	\node at (0,3)  [dot] (middle) {};
	\node at (0,5)  [dot] (top) {};
	\node at (-2,3)  [dot] (left) {};

	\node at (-2,5)  [var] (varl) {};
	\node at (2,5)  [var] (vart) {};
	\draw[testfcn] (root2) to  (root);
	
	\draw[kernelBig] (middle) to (root2);
	\draw[kernel1] (left) to (middle);
	\draw[kernel1] (top) to (middle);
	\draw[rho] (varl) to (left); 
	\draw[rho] (vart) to (top); 
\end{tikzpicture}
\;-\;
\begin{tikzpicture}[scale=0.35,baseline=0.4cm]
	\node at (0,-1)  [root] (root) {};
	\node at (0,1)  [dot] (root2) {};
	\node at (0,3)  [dot] (middle) {};
	\node at (0,5)  [dot] (top) {};
	\node at (-2,3)  [dot] (left) {};
	\node at (-1,2)  [dot] (center) {};

	\node at (-2,5)  [var] (varl) {};
	\node at (2,5)  [var] (vart) {};
	\draw[testfcn] (root2) to  (root);
	
	\draw[rho] (center) to (middle); 
	\draw[rho] (center) to (root2); 

	\draw[kernel,bend left=60] (middle) to (root);
	\draw[kernel1] (left) to (middle);
	\draw[kernel1] (top) to (middle);
	\draw[rho] (varl) to (left); 
	\draw[rho] (vart) to (top); 
\end{tikzpicture}
\;-\;
\begin{tikzpicture}[scale=0.35,baseline=0.4cm]
	\node at (0,-1)  [root] (root) {};
	\node at (0,1)  [dot] (root2) {};
	\node at (0,3)  [dot] (middle) {};
	\node at (0,5)  [dot] (top) {};
	\node at (-2,3)  [dot] (left) {};
	\node at (-1,2)  [dot] (center) {};

	\node at (-2,5)  [var] (varl) {};
	\node at (2,5)  [var] (vart) {};
	\draw[testfcnx] (root2) to  (root);
	
	\draw[rho] (center) to (middle); 
	\draw[rho] (center) to (root2); 

	\draw[kprime,bend left=60] (middle) to (root);
	\draw[kernel1] (left) to (middle);
	\draw[kernel1] (top) to (middle);
	\draw[rho] (varl) to (left); 
	\draw[rho] (vart) to (top); 
\end{tikzpicture}
\\&
\;+\;
\begin{tikzpicture}[scale=0.35,baseline=0.4cm]
	\node at (0,-1)  [root] (root) {};
	\node at (0,1)  [dot] (root2) {};
	\node at (0,3)  [dot] (middle) {};
	\node at (0,5)  [dot] (top) {};
	\node at (-2,3)  [dot] (left) {};

	\node at (-2,5)  [dot] (contr) {};
	\draw[testfcn] (root2) to  (root);
	
	\draw[kernelBig] (middle) to (root2);
	\draw[kernel1] (left) to (middle);
	\draw[kernel1] (top) to (middle);
	\draw[rho] (contr) to (left); 
	\draw[rho] (contr) to (top); 
\end{tikzpicture}
\;-\;
\begin{tikzpicture}[scale=0.35,baseline=0.4cm]
	\node at (0,-1)  [root] (root) {};
	\node at (0,1)  [dot] (root2) {};
	\node at (0,3)  [dot] (middle) {};
	\node at (0,5)  [dot] (top) {};
	\node at (-2,3)  [dot] (left) {};
	\node at (-1,2)  [dot] (center) {};

	\node at (-2,5)  [dot] (contr) {};
	\draw[testfcn] (root2) to  (root);
	
	\draw[rho] (center) to (middle); 
	\draw[rho] (center) to (root2); 

	\draw[kernel,bend left=60] (middle) to (root);
	\draw[kernel1] (left) to (middle);
	\draw[kernel1] (top) to (middle);
	\draw[rho] (contr) to (left); 
	\draw[rho] (contr) to (top); 
\end{tikzpicture}
\;-\;
\begin{tikzpicture}[scale=0.35,baseline=0.4cm]
	\node at (0,-1)  [root] (root) {};
	\node at (0,1)  [dot] (root2) {};
	\node at (0,3)  [dot] (middle) {};
	\node at (0,5)  [dot] (top) {};
	\node at (-2,3)  [dot] (left) {};
	\node at (-1,2)  [dot] (center) {};

	\node at (-2,5)  [dot] (contr) {};
	\draw[testfcnx] (root2) to  (root);
	
	\draw[rho] (center) to (middle); 
	\draw[rho] (center) to (root2); 

	\draw[kprime,bend left=60] (middle) to (root);
	\draw[kernel1] (left) to (middle);
	\draw[kernel1] (top) to (middle);
	\draw[rho] (contr) to (left); 
	\draw[rho] (contr) to (top); 
\end{tikzpicture}
\;-2\;
\begin{tikzpicture}[scale=0.35,baseline=0.4cm]
	\node at (0,-1)  [root] (root) {};
	\node at (0,1)  [dot] (root2) {};
	\node at (0,3)  [dot] (middle) {};
	\node at (0,5)  [dot] (top) {};
	\node at (-2,1)  [dot] (left) {};
	\node at (-1,2)  [dot] (leftc) {};

	\node at (2,3)  [dot] (contr) {};
	\draw[testfcn] (root2) to  (root);
	
	\draw[kernel2] (middle) to (root2);
	\draw[kernel] (left) to (root);
	\draw[kernel1] (top) to (middle);
	\draw[rho] (contr) to (root2); 
	\draw[rho] (leftc) to (left); 
	\draw[rho] (contr) to (top); 
	\draw[rho] (leftc) to (middle); 
\end{tikzpicture}\;.
\end{equs}
It is again a lengthy but ultimately straightforward task to verify that
each term yields a graph satisfying Assumption~\ref{ass:graph}, so that the
required bounds and convergence hold, except for the first term appearing on the last line.
This term however can be rewritten as
\begin{equ}
\begin{tikzpicture}[scale=0.35,baseline=0.4cm]
	\node at (0,-1)  [root] (root) {};
	\node at (0,1)  [dot] (root2) {};
	\node at (0,3)  [dot] (middle) {};
	\node at (0,5)  [dot] (top) {};
	\node at (-2,3)  [dot] (left) {};

	\node at (-2,5)  [dot] (contr) {};
	\draw[testfcn] (root2) to  (root);
	
	\draw[kernelBig] (middle) to (root2);
	\draw[kernel1] (left) to (middle);
	\draw[kernel1] (top) to (middle);
	\draw[rho] (contr) to (left); 
	\draw[rho] (contr) to (top); 
\end{tikzpicture}
\;=\;-2\;
\begin{tikzpicture}[scale=0.35,baseline=0.4cm]
	\node at (0,-1)  [root] (root) {};
	\node at (0,1)  [dot] (root2) {};
	\node at (0,3)  [dot] (middle) {};
	\node at (0,5)  [dot] (top) {};
	\node at (-2,3)  [dot] (left) {};

	\node at (-2,5)  [dot] (contr) {};
	\draw[testfcn] (root2) to  (root);
	
	\draw[kernelBig] (middle) to (root2);
	\draw[kernel] (left) to (root);
	\draw[kernel] (top) to (middle);
	\draw[rho] (contr) to (left); 
	\draw[rho] (contr) to (top); 
\end{tikzpicture}\;,
\end{equ}
thus showing that it is identical to the term appearing on the first line of \eqref{e:someMessyStuff}
and has already been dealt with.

An argument very similar to the ones given previously shows that, 
if we test $\hat \Pi_0^{(\eps)} \<Xi4c>$ agains a test function $\phi$ supported in the future 
and take the limit $\eps \to 0$, we do again obtain the identity \eqref{e:Itointegral}.
This finally concludes the proof of Theorem~\ref{theo:convModel}.

\section{Identification of the limit}

We now show that, when using the It\^o model constructed in Theorem~\ref{theo:convModel}, 
the corresponding solutions constructed in Theorem~\ref{theo:gen} coincide with the 
classical It\^o solutions to the nonlinear stochastic 
heat equation. Let us first state the following fact.

\begin{lemma}\label{lem:indicFcn}
Let $\eta \in \CC^\alpha(\R \times S^1)$ for $\alpha \in (-2,0)$ and let 
$[s,t] \subset \R$. Then, there exists a unique distribution
$\one_{[s,t]}\eta \in \CC^\alpha$ such that 
\begin{equ}
\bigl(\one_{[s,t]}\eta\bigr)(\phi) = \eta(\phi)\;,
\end{equ}
for smooth test functions $\phi$ such that $\supp \phi \subset [s,t] \times S^1$,
and such that furthermore $\bigl(\one_{[s,t]}\eta\bigr)(\phi) = 0$ if 
$\supp \phi \cap [s,t] \times S^1 = \emptyset$. The map
$\eta \mapsto \one_{[s,t]}\eta$ is continuous in $\CC^\alpha$.
\end{lemma}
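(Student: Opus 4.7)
\textbf{Proof proposal for Lemma~\ref{lem:indicFcn}.}

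The plan is to define $\one_{[s,t]}\eta$ as a limit of the smooth truncations $\chi_\varepsilon \,\eta$, where $\chi_\varepsilon = \one_{[s,t]} * \rho_\varepsilon^t$ for a symmetric temporal mollifier $\rho_\varepsilon^t$ at scale $\varepsilon$ (so that $\chi_\varepsilon$ is smooth, vanishes outside $[s-\varepsilon, t+\varepsilon]$, equals $1$ on $[s+\varepsilon, t-\varepsilon]$, and has derivatives of size $\varepsilon^{-k}$). For every smooth test function $\phi$, I will set
\begin{equ}
(\one_{[s,t]}\eta)(\phi) \eqdef \lim_{\varepsilon \to 0} \eta(\chi_\varepsilon \phi)\;,
\end{equ}
and the core task is to show that this limit exists and defines an element of $\CC^\alpha$ with norm controlled by that of $\eta$. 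The two stated properties (agreement with $\eta$ on test functions supported in $[s,t]\times S^1$ and vanishing on those supported away) are then immediate from the construction, since for such $\phi$ one has $\chi_\varepsilon \phi = \phi$ (respectively $\chi_\varepsilon \phi = 0$) for all sufficiently small $\varepsilon$.

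The main estimate is the following Cauchy bound, which I would establish by a parabolic covering argument. For $0 < \varepsilon' < \varepsilon$, the difference $\chi_\varepsilon - \chi_{\varepsilon'}$ is supported in two temporal strips of width $\lesssim \varepsilon$ around $s$ and $t$, with $\|\chi_\varepsilon - \chi_{\varepsilon'}\|_\infty \lesssim 1$. Given a test function $\phi_z^\lambda$ localised at scale $\lambda$, I will split the analysis into two cases. If $\varepsilon \ge \lambda^2$, then $\chi_\varepsilon \phi_z^\lambda - \chi_{\varepsilon'}\phi_z^\lambda$ is a constant multiple of a test function of class $\CB$ rescaled at scale $\lambda$, and thus pairs against $\eta$ with bound $\lesssim \lambda^\alpha$; this gives the initial reference bound. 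If $\varepsilon < \lambda^2$, then the intersection of the strips with $\supp \phi_z^\lambda$ has temporal extent $\lesssim \varepsilon$ and spatial extent $\lesssim \lambda$; it can be covered by $\lesssim \lambda/\sqrt{\varepsilon}$ parabolic balls of radius $\sqrt{\varepsilon}$. Using a smooth partition of unity subordinate to this cover, $(\chi_\varepsilon - \chi_{\varepsilon'})\phi_z^\lambda$ decomposes as a sum of at most $\lambda/\sqrt\varepsilon$ pieces, each of which (after multiplication by $\lambda^3 \sqrt{\varepsilon}^{-3}$) is an admissible test function at scale $\sqrt\varepsilon$. Testing against $\eta$ gives
\begin{equ}
\bigl|\eta\bigl((\chi_\varepsilon - \chi_{\varepsilon'})\phi_z^\lambda\bigr)\bigr| \lesssim \frac{\lambda}{\sqrt\varepsilon}\cdot \lambda^{-3}\sqrt\varepsilon^{\,3}\cdot\sqrt\varepsilon^{\,\alpha}\|\eta\|_{\CC^\alpha} \lesssim \lambda^{-2}\varepsilon^{(2+\alpha)/2}\|\eta\|_{\CC^\alpha}\;,
\end{equ}
which tends to $0$ as $\varepsilon, \varepsilon' \to 0$ precisely because $\alpha > -2$. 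Summing a dyadic telescoping series from $\varepsilon = \lambda^2$ down to $0$ and adding the reference bound yields $|(\one_{[s,t]}\eta)(\phi_z^\lambda)| \lesssim \lambda^\alpha \|\eta\|_{\CC^\alpha}$, uniformly in $\lambda \in (0,1]$, $z$, and $\phi \in \CB$. The same estimate applied to $\eta - \bar\eta$ proves continuity of the map $\eta \mapsto \one_{[s,t]}\eta$ in $\CC^\alpha$.

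For uniqueness, suppose $\nu_1,\nu_2 \in \CC^\alpha$ both satisfy the two defining properties; then their difference $\nu$ vanishes on test functions supported either in the interior of $[s,t]\times S^1$ or in its complement. A standard partition-of-unity argument shows that $\nu$ is supported on $(\{s\}\cup\{t\})\times S^1$. Any distribution supported on the single time slice $\{t = s_0\}$ is of the form $\sum_{k\ge 0}\delta^{(k)}(t-s_0)\otimes \mu_k(x)$ for finitely many non-zero $\mu_k$; testing against $\phi_{(s_0,x_0)}^\lambda$ shows that the term with index $k$ has parabolic Hölder regularity no better than $-2-2k$. Consequently, membership in $\CC^\alpha$ with $\alpha > -2$ forces all $\mu_k$ to vanish, so $\nu = 0$.

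The main obstacle is the main estimate above, specifically matching the combinatorics of the parabolic covering of a thin temporal strip against the $\lambda$-scale of $\phi_z^\lambda$; the rest of the argument is more bookkeeping. Once the estimate is in place, the extension of the pairing $(\one_{[s,t]}\eta)(\phi)$ from $\phi \in \CB$ (at all scales and centres) to general smooth test functions is standard, and the required bound on $\one_{[s,t]}\eta$ in $\CC^\alpha$ follows directly.
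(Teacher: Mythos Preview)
Your argument is correct and self-contained; the paper, by contrast, gives essentially no proof and simply defers to external references (the Euclidean case in Runst--Sickel and, for the parabolic scaling, \cite[Prop.~6.9]{Regularity}). What you are doing is effectively unpacking the relevant special case of that proposition by hand: define the truncation via mollified cutoffs $\chi_\varepsilon$, then use a parabolic covering of the thin temporal strip $\{|t-s_0|\lesssim\varepsilon\}$ by balls of radius $\sqrt\varepsilon$ to get the Cauchy estimate $\lambda^{-2}\varepsilon^{(2+\alpha)/2}$. The decisive input in both approaches is the same --- the hyperplane $\{t=s_0\}$ has effective (parabolic) codimension $2$, which is exactly why the condition $\alpha>-2$ appears --- but your version makes this transparent without invoking the modelled-distribution machinery. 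One small presentational point: in the regime $\varepsilon\ge\lambda^2$ what you actually need is just the single reference bound $|\eta(\chi_{\lambda^2}\phi_z^\lambda)|\lesssim\lambda^\alpha$ (since $\chi_{\lambda^2}\phi_z^\lambda$ is itself an admissible rescaled test function), not a bound on a difference; the telescoping then runs from $\varepsilon=\lambda^2$ downward, as you indicate.
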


\begin{proof}
In the Euclidean case this is known, see for example 
\cite[Chap.~4.6.3]{MR1419319}. In the more general parabolic case 
considered here, it is an easy corollary of \cite[Prop.~6.9]{Regularity}. (Noting that the
``effective codimension'' of the hyperplane $\{t = 0\}$ is $2$ in the parabolic case
and $1$ in the Euclidean case.)
\end{proof}

With the help of this lemma, we are able to formulate the following result,
which is the main ingredient in identifying our solution with the classical It\^o solution. Here 
we use again the notation $\CT_\CU$ introduced in \eqref{e:Tdirectsum}.

\begin{theorem}\label{theo:ident}
Let $(\hat \Pi,\hat F)$ be the It\^o model built in Theorem~\ref{theo:convModel} 
and let $V$ be a $\D^{\gamma,\eta}_\CU$-valued random variable
for some $\gamma > {3\over 2}+\kappa$ and $\eta \ge 0$, and let $T > 0$ be a bounded stopping time. 
Assume that the stochastic process $V\colon (t,x) \mapsto V(t,x) \in \CT_\CU$ is adapted to
the filtration $\{\CF_s\}_{s \in \R}$ generated by the underlying space-time white noise and that there exists some $p>2$ such
that $\E \|V\|_{\gamma,\eta;\K}^p < \infty$ for the compact set $\K = [0,T] \times S^1$.

Denote by $\CR$ the reconstruction operator associated to $(\hat \Pi,\hat F)$.
Then, for any smooth function $\psi\colon \R_+ \times S^1 \to \R$ with $\supp \psi \subset (0,\infty) \times S^1$, one has
\begin{equ}
\bigl(\one_{[0,T]}\CR (V\sXi)\bigr)(\psi) = \int_0^T \scal{v(t,\cdot)\psi(t,\cdot) ,dW(t)}\;,
\end{equ}
where $v(t,x) = \scal{\one, V(t,x)}$ denotes the component of $V$ in the subspace spanned by $\1$.
Here, $V\sXi$ denotes the modelled distribution given by $(V\sXi)(t,x) = V(t,x)\sXi$.
\end{theorem}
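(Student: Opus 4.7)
The plan is to reduce the identification to the local Itô property \eqref{e:Itointegral} of the It\^o model, via a partition of unity argument that exploits both the adaptedness of $V$ and the uniqueness statement of the reconstruction theorem. First I would note that since $\psi$ is supported in $(0,\infty)\times S^1$, we may assume $\supp \psi \subset [\delta, T]\times S^1$ for some $\delta > 0$, so that the indicator $\one_{[0,T]}$ in Lemma~\ref{lem:indicFcn} does not interact with the boundary of the support of $\psi$. By linearity and approximation it suffices to prove the identity for $\psi$ of small parabolic support.

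Next I would fix a small scale $\lambda \in (0,1]$ and a partition of unity $\{\psi_i\}_{i \in I}$ subordinate to a covering of $[\delta, T]\times S^1$ by parabolic balls $B_i$ of radius $\lambda$, chosen so that $\psi_i = \psi \cdot \chi_i$ with each $\chi_i$ smooth and uniformly bounded in the appropriate norms. For each $i$, let $z_i = (t_i, x_i)$ be the point at the \emph{bottom} of $B_i$ in the time direction, so that $\supp \psi_i \subset \{s > t_i\} \cap B_i$; in particular $\psi_i$ is supported in the strict future of $z_i$. By the reconstruction theorem (\cite[Thm~3.10]{Regularity}) applied to $V\sXi \in \CD^{\gamma - 3/2-\kappa, \eta - 3/2-\kappa}$ (using Remark~\ref{rem:reconstr} and the hypothesis $\gamma > 3/2+\kappa$), one has
\begin{equ}[e:locRec]
\bigl|\CR(V\sXi)(\psi_i) - \bigl(\hat \Pi_{z_i} V(z_i)\sXi\bigr)(\psi_i)\bigr| \lesssim \lambda^{\gamma - 3/2-\kappa} \, \|V\|_{\gamma,\eta;\K}\, \|\hat \Pi\|_{\K}\, \|\psi\|_\infty\;,
\end{equ}
with a proportionality constant that is uniform in $i$ (after summing, this contributes $O(\lambda^{\gamma - 3/2 - \kappa - 3})$ times the volume $O(\lambda^3|I|)$, i.e.\ $O(\lambda^{\gamma - 3/2 - \kappa})$ per unit volume, which vanishes as $\lambda \to 0$).

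The heart of the argument is to apply \eqref{e:Itointegral} term-by-term in the expansion $V(z_i) = \sum_\tau v_\tau(z_i)\, \tau$, using that $\psi_i$ is supported in the future of $z_i$ and that $V$ is adapted, so $v_\tau(z_i)$ is $\CF_{t_i}$-measurable. This yields
\begin{equ}[e:locIto]
\bigl(\hat \Pi_{z_i} V(z_i)\sXi\bigr)(\psi_i) = \int_{t_i}^\infty \scal[\big]{\bigl(\hat \Pi_{z_i} V(z_i)\bigr)(s,\cdot)\, \psi_i(s,\cdot),\, dW(s)}\;,
\end{equ}
which is a bona fide It\^o integral since the integrand is adapted on $\supp \psi_i$: indeed $V(z_i)$ is $\CF_{t_i}$-measurable, $\hat \Pi_{z_i}\tau$ is $\CF_{s\vee t_i}$-measurable at time $s$ by the non-anticipativity of $K$ (as observed in the paragraph following \eqref{e:wantedProp}), and $s > t_i$ on $\supp \psi_i$. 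Writing $V(z_i) = v(z_i)\one + \tilde V(z_i)$ with $\tilde V$ of strictly positive homogeneity, the contribution of $\tilde V$ satisfies $\bigl|\bigl(\hat\Pi_{z_i} \tilde V(z_i)\bigr)(s,y)\bigr| \lesssim \lambda^{1/2 - \kappa}\|V\|_{\gamma,\eta;\K}$ uniformly on $B_i$ (using the analytic bounds on the model together with the definition of $\CD^{\gamma,\eta}$, and the fact that the smallest strictly positive homogeneity in $\CU$ is $1/2-\kappa$). Applying the It\^o isometry, this contribution to $\int \scal{v_\tau(z_i) (\hat\Pi_{z_i}\tau)(s,\cdot)\psi_i(s,\cdot),\,dW(s)}$ has $L^2(\Omega)$ norm of order $\lambda^{1/2 - \kappa}$ times $\lambda^{-3/2}\|\psi\|_\infty \, \text{vol}(B_i)^{1/2}$, which sums to a vanishing contribution. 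A similar (and simpler) argument using the H\"older regularity of $v$ (which, by Prop.~3.28 of \cite{Regularity}, is at least $1/2-\kappa$) replaces $v(z_i)$ by $v(s,\cdot)$ inside the It\^o integral with an analogous vanishing error. Summing \eqref{e:locIto} over $i$ and adding back the reconstruction error from \eqref{e:locRec}, we obtain the claimed identity in $L^2(\Omega)$ after passing to the limit $\lambda \to 0$.

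The main obstacle, and the one that will require the most care, is the coupling of \emph{deterministic} reconstruction bounds with \emph{stochastic} It\^o-type bounds in a single $L^2(\Omega)$ estimate. The reconstruction bound \eqref{e:locRec} holds pathwise and scales well for individual $i$, but naively summing $|I| \sim \lambda^{-3}$ such bounds could destroy the gain; the rescue is that \eqref{e:locRec} must be used together with the almost-orthogonality of the It\^o integrals on disjoint $\psi_i$, so that after squaring and summing one picks up $|I|$ rather than $|I|^2$. Making this precise amounts to showing that the errors $E_i := \CR(V\sXi)(\psi_i) - \int \scal{v\psi_i, dW}$ are, up to a term that is small in $L^\infty(\Omega)$ by reconstruction, an adapted increment-type quantity whose variances add up. Once this bookkeeping is in place, the use of the hypothesis $\E\|V\|_{\gamma,\eta;\K}^p < \infty$ for some $p>2$ (rather than merely $p=2$) is exactly what provides the small slack needed to absorb the model's polynomial moments via H\"older's inequality.
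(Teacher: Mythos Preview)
Your approach is sound and follows a route genuinely different from the paper's. Both arguments localise $\psi$ into pieces supported strictly in the future of a chosen base point, invoke the It\^o identity \eqref{e:Itointegral} on each piece, and use the strictly positive homogeneity of the non-constant components of $V$ to discard the higher-order terms. The paper, however, does not estimate $\CR(V\sXi)(\psi) - \int\scal{v\psi,dW}$ directly in $L^2(\Omega)$. Instead it invokes the martingale representation theorem to reduce to the dual identity
\begin{equ}
\E\Bigl[(\one_{[0,T]}\CR V\sXi)(\psi)\int\scal{\Psi,dW}\Bigr] = \E\scal{v\psi,\Psi}_T
\end{equ}
for arbitrary adapted Lipschitz $\Psi$, and realises the localisation via a multiresolution analysis whose scaling function is chosen with support in $[0,K]$, so that each coefficient $(\hat\Pi_z f(z))(\phi^n_z)$ automatically tests only the future of $z$. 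After \eqref{e:Itointegral} and It\^o's isometry, the limit is read off from the pointwise bound $|(\hat\Pi_z V(z))(\hat z) - v(z)| \lesssim |z-\hat z|^\alpha$ of \cite[Prop.~3.28]{Regularity} together with the fact that $\{\phi^n_z\}$ is an approximate identity. Your partition-of-unity argument with base points at the bottom of each ball, combined with the near-orthogonality of It\^o integrals with disjointly supported integrands, is a valid and more elementary alternative that avoids both the representation theorem and the wavelet machinery; your closing paragraph correctly pinpoints where the moment assumption $p>2$ is spent (one small clarification: the reconstruction errors \eqref{e:locRec} already sum \emph{pathwise} to $O(\lambda^{\gamma-3/2-\kappa})$ and need no orthogonality---it is only the It\^o-integral errors coming from $\tilde V$ and from $v(z_i)-v$ that require it).

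There is, however, one genuine gap. Your reduction ``we may assume $\supp\psi \subset [\delta,T]\times S^1$'' is not available, because $T$ is a \emph{stopping time} and hence random: one cannot arrange for a fixed smooth $\psi$ to have support inside a random interval, and $V$ need not even be defined past $T$. The paper deals with this by isolating the boundary contribution explicitly: among the $O(2^{3n})$ dyadic grid points only $O(2^n)$ have time coordinate within $K2^{-2n}$ of $T$, each such term is $O(2^{-3n/2})$ in $L^2(\Omega)$, and so their total $R_n$ satisfies $\E|R_n|^2 \lesssim 2^{-n}$. In your framework the analogue would be to separate off the (random, $\omega$-dependent) collection of balls $B_i$ meeting $\{t = T(\omega)\}$: there are $O(\lambda^{-1})$ of them, and on each one can bound $(\one_{[0,T]}\CR(V\sXi))(\psi_i)$ via Lemma~\ref{lem:indicFcn} together with the $\CC^{-3/2-\kappa}$ regularity of $\CR(V\sXi)$, and bound the truncated It\^o integral $\int_0^T\scal{v\psi_i,dW}$ via the isometry; both sums are then $o(1)$ as $\lambda\to 0$. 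This is routine once noticed, but it cannot be skipped.
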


\begin{remark}
Note that the space $\D^{\gamma,\eta}_\CU$ itself is also random in this statement!
The statement is somewhat surprising since it shows that in this situation $\CR_T(V\sXi)$ only depends
on $v$ and not on any of the higher-order coefficients, which is certainly not the case in general.
\end{remark}

\begin{remark}
In practice, we will apply this theorem to the case $V = \hat G(U)$, where $U$ is the solution
to the fixed point equation \eref{e:FP}.
\end{remark}

\begin{proof}[of Theorem~\ref{theo:ident}]
Note first that, as a consequence of the canonical inclusion 
$\CD^{\gamma,\eta} \subset \CD^{\gamma',\eta}$ for $\gamma' < \gamma$ (obtained
by setting to $0$ all components in $\CT_\alpha$ with $\alpha \in [\gamma',\gamma)$), 
we can assume without loss of generality that 
$\gamma < {7\over 4}$.

Every centered square integrable $\CF$-measurable random variable can be 
represented as a stochastic integral against $W$, as is shown in Lemma 1.1 from \cite{CW}. Hence it suffices 
to show that for any smooth and compactly supported test function $\psi$ and for any
 adapted and uniformly Lipschitz continuous process $ \Psi$ with compact support, 
 one has the identity
\begin{equ}[e:wantedIdIto]
\E \Bigl(\bigl(\one_{[0,T]}\CR V\sXi\bigr)(\psi) \int_0^\infty\scal{ \Psi(t),dW(t)} \Bigr) = \E\scal{v \psi,  \Psi}_T\;,
\end{equ}
where $\scal{\cdot,\cdot}_T$ denotes the scalar product in $L^2([0,T]\times S^1)$.

We rely on the fact that, by \cite[Thm~3.23]{Regularity} and Lemma~\ref{lem:indicFcn}, for any modelled distribution $f\in \CD^{\bar \gamma,\bar \eta}_\Xi$ (the space of elements in 
$\CD^{\bar \gamma,\bar \eta}$ with values in $\CT_\Xi$) with $\bar \gamma > 0$, $\bar \eta > -2$, 
and any smooth test function $\psi$ 
supported in a compact set $\K$ not intersecting $\{(t,x)\,:\, t=0\}$, 
one has the bound
\begin{equ}[e:boundConv]
\bigl|\bigl(\one_{[0,T]}\CR f\bigr)(\psi) - \scal{\one_{[0,T]}\CR^n f, \psi}_T\bigr| \lesssim C(\psi) 2^{-\alpha n} \|f\|_{\bar \gamma,\bar \eta;\K} \bigl(\$\hat \Pi\$_{\bar\gamma} + \$\hat \Gamma\$_{\bar\gamma}\bigr) \;,
\end{equ}
where $\alpha$ is any exponent with $\alpha \in (0,\bar \gamma)$, $\$\hat \Gamma\$_{\bar \gamma}$ was defined in \eqref{e:defNormGamma},
\begin{equ}
\$\hat \Pi\$_{\bar \gamma} = \sup_{z,\lambda,\phi} \sup_{|\tau| < \bar \gamma} \lambda^{-|\tau|} |(\Pi_z \tau)(\phi_z^\lambda)|\;,
\end{equ}
(with the supremum over $z$, $\phi$ and $\lambda$ as in \eqref{e:distPi}) 
and where the sequence of functions $\one_{[0,T]}\CR^n f$ is given by
\begin{equ}[e:defRnf]
\bigl(\one_{[0,T]}\CR^n f\bigr)(z) = \one_{[0,T]}(t)\sum_{\bar z \in \Lambda_\s^n(T)} \bigl(\hat \Pi_{\bar z} f(\bar z)\bigr)(\phi^n_{\bar z})\,\phi^n_{\bar z}(z)\;,
\end{equ}
where $t$ is the time coordinate of $z$.
Here, $\Lambda_\s^n(T)$ denotes the dyadic grid on $[0,T]\times S^1$, 
$\phi$ is the scale function of some
sufficiently smooth multiresolution analysis and, writing $z = (t,x)$, $\bar z = (\bar t, \bar x)$, one sets
\begin{equ}
\phi^n_{\bar z}(z) = 2^{3n \over 2} \phi(2^n(x-\bar x))\phi(2^{2n}(t-\bar t))\;.
\end{equ}
(See also \cite[Section~3.3]{Regularity} for more details.)
Our argument relies on the fact that the choice of multiresolution analysis in this construction is arbitrary. In particular, we can ensure that the support of 
$\phi$ is contained in the
interval $[0,K]$ for some $K>0$ and we make such a choice from now on.

Using \eqref{e:defRnf} with $f = V\sXi$ and then applying \eqref{e:Itointegral}, we now have the identity
\begin{equs}
\scal{\one_{[0,T]}\CR^n V\sXi,\psi} &= \sum_{ z \in \Lambda^n_\s}\bigl(\hat \Pi_{ z} \sXi V(\bar z)\bigr)(\phi^n_{ z})\,\scal{\phi^n_{ z},\psi}_T \\
&= \sum_{ z \in \Lambda^n_\s}\scal{\phi^n_{ z},\psi}_T \int_{t}^\infty \scal{\bigl(\hat \Pi_{ z} V( z)\bigr)(s,\cdot) \phi^n_{ z}(s,\cdot), dW(s)}\;,
\end{equs}
where we wrote $\Lambda^n_\s$ for the dyadic grid on all of $\R_+ \times S^1$.
Note now that if either $t \le T - K2^{-2n}$ or $t \ge T$, we have the identity
\begin{equs}
\scal{\phi^n_{ z},\psi}_T &\int_{t}^\infty \scal{\bigl(\hat \Pi_{ z} V( z)\bigr)(s,\cdot) \phi^n_{ z}(s,\cdot), dW(s)} \\ 
&= \scal{\phi^n_{ z},\psi}_T \int_{t}^T \scal{\bigl(\hat \Pi_{ z} V( z)\bigr)(s,\cdot) \phi^n_{ z}(s,\cdot), dW(s)}\;.
\end{equs}
As a consequence, we can write 
\begin{equ}
\scal{\one_{[0,T]}\CR^n V\sXi,\psi} 
= \sum_{ z \in \Lambda^n_\s}\scal{\phi^n_{ z},\psi}_T \int_{t}^T \scal{\bigl(\hat \Pi_{ z} V( z)\bigr)(s,\cdot) \phi^n_{ z}(s,\cdot), dW(s)} + R_n\;,
\end{equ}
where $R_n$ is a sum of the order of $2^n$ terms, each of which is bounded (in squared expectation) 
by a multiple of $2^{-3n/2}$, so that $\E |R_n|^2 \lesssim 2^{-n}$.

Combining this with \eqref{e:boundConv}, we have now shown that there exists $\alpha > 0$ such that
\begin{equ}
\bigl(\one_{[0,T]}\CR V\sXi\bigr)(\psi) = \sum_{ z \in \Lambda^n_\s}\scal{\phi^n_{ z},\psi}_T \int_{t}^T \scal{\bigl(\hat \Pi_{ z} V( z)\bigr)(s,\cdot) \phi^n_{ z}(s,\cdot), dW(s)} + \tilde R_n\;,
\end{equ}
where $\tilde R_n$ satisfies $\E |\tilde  R_n|^2 \lesssim 2^{-\alpha n}$.
Consequently, using It\^o's isometry, 
the left hand side of \eqref{e:wantedIdIto} equals the limit, as $n \to \infty$, of
\begin{equs}
\E \sum_{ z \in \Lambda^n_\s}\scal{\phi^n_{ z},\psi}_T &\int_{t}^T \scal{\bigl(\hat \Pi_{ z} V( z)\bigr)(s,\cdot) \phi^n_{ z}(s,\cdot), \Psi(s,\cdot)}\,ds \\
&=\E \sum_{ z \in \Lambda^n_\s}\scal{\phi^n_{ z},\psi}_T \scal{\bigl(\hat \Pi_{ z} V( z)\bigr)(\cdot) \phi^n_{ z}, \Psi}_T\;. \label{e:lastBound}
\end{equs} 

At this stage we use the fact that, as a consequence of \cite[Prop.~3.28]{Regularity},
there exists an exponent $\alpha > 0$ such that 
\begin{equ}
\bigl|\bigl(\Pi_{z} V(z)\bigr)(\hat z) - v(z)\bigr| \le C |z-\hat z|_\s^{\alpha} \|V\|_{\gamma;\K} 
\$\Gamma\$_{\gamma}\;,
\end{equ}
where $\K$ is some compact set containing $z$ and $\hat z$. 
Inserting this into \eref{e:lastBound} and exploiting the fact that 
$\{\phi^n_{ z}\}_{z \in \Lambda^n_\s}$ is an approximate resolution of the identity,
it is now straightforward to verify that the limit of \eref{e:lastBound}
as $n \to \infty$ equals the right hand side of \eqref{e:wantedIdIto}
as required.
\end{proof}

As a corollary of this result, it is now straightforward to obtain the announced result,
namely that the solution to our abstract fixed point
problem driven by the It\^o model is nothing but the classical weak solution 
to \eref{e:SPDE} interpreted in the It\^o sense.

\begin{corollary}\label{cor:ItoSol}
Let $(\hat \Pi,\hat F)$ be the It\^o model built in Theorem~\ref{theo:convModel}, let $\eta \in (0,{1\over 2}-\kappa)$,
let $M > 0$,
and let $U \in \CD^{\gamma,0}_\CU$ be the solution to the fixed point problem \eref{e:FP}
given by Theorem~\ref{theo:gen} with respect to $(\hat \Pi,\hat F)$, up to the first (stopping)
time $\tau$
where $u = \CR U$ satisfies $\|u(\tau,\cdot)\|_{\CC^\eta} \ge M$. 

Then, the function $u$ on $[0,\tau]$ coincides almost surely with the unique 
weak local solution to \eref{e:SPDE}.
\end{corollary}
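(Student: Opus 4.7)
The plan is to apply the reconstruction operator $\CR$ to both sides of the fixed point equation \eref{e:FP} satisfied by $U$ and then use Theorem~\ref{theo:ident} to identify the ``noise part'' of the resulting expression with a genuine It\^o integral. Since $\CR \CP V = P \star \CR V$ and $\CR \hat H(U) = H(u)$ and $\TT_\gamma P u_0$ reconstructs to $Pu_0$, applying $\CR$ to \eref{e:FP} yields on $[0,\tau]\times S^1$ the identity
\begin{equ}
u = P \star \bigl(\one_{t>0}\bigl(H(u) + \CR(\hat G(U)\sXi)\bigr)\bigr) + Pu_0\;,
\end{equ}
interpreted in the distributional sense. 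The goal is then to show that $\one_{[0,\tau]}\CR(\hat G(U)\sXi)$ coincides, as a distribution paired against test functions supported in $(0,\infty)\times S^1$, with the It\^o-integral-valued distribution $\psi \mapsto \int_0^\tau \scal{G(u(t,\cdot))\psi(t,\cdot),dW(t)}$, since once this is established $u$ is a weak (hence mild) It\^o solution to \eref{e:SPDE}, and pathwise uniqueness of such solutions (under the standing hypotheses $H \in \CC^2$, $G \in \CC^5$ with bounded first derivatives, cf.\ \cite{DPZ}) will conclude the proof.

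To set up Theorem~\ref{theo:ident} with $V = \hat G(U)$, three hypotheses must be verified. First, $U \in \CD^{\gamma,0}_\CU$ implies $\hat G(U) \in \CD^{\gamma,0}_\CU$ by \cite[Prop.~6.13]{Regularity} (the component of $\hat G(U)$ along $\one$ being precisely $G(u)$, by \eref{e:FHat}). Secondly, I will argue adaptedness of $\hat G(U)$ to the filtration $\CF$: this reduces to showing that each coefficient of $U$ in its decomposition along the basis vectors of $\CW$ is $\CF_t$-measurable at time $t$. This is inherited from the fixed point construction of Theorem~\ref{theo:gen}, since the Picard iterates are built by applying $\CP$ (which preserves adaptedness because $K$ is non-anticipative) and the pointwise operations $\hat H$, $\hat G$, multiplication by $\sXi$, all of which preserve adaptedness; the modelled distribution initial data $\TT_\gamma P u_0$ is deterministic. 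One also needs to record that the coefficients of $\hat \Pi_z \tau$ appearing implicitly in the norm $\|U\|_{\gamma,0}$ are $\CF_{s\vee t}$-measurable when evaluated at $s$, which is part of the construction of the It\^o model in Theorem~\ref{theo:convModel}. Thirdly, the moment bound $\E \|\hat G(U)\|_{\gamma,0;\K}^p < \infty$ on $\K = [0,\tau]\times S^1$ for some $p > 2$ follows from the bound $\|u(\tau,\cdot)\|_{\CC^\eta} \le M$ enforced by the stopping time, combined with the local Lipschitz continuity of the solution map from Theorem~\ref{theo:gen} and the moment bounds on the It\^o model from \eref{e:wantedBoundModel}.

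With all hypotheses verified, Theorem~\ref{theo:ident} yields for every test function $\psi$ with $\supp \psi \subset (0,\infty)\times S^1$,
\begin{equ}
\bigl(\one_{[0,\tau]}\CR(\hat G(U)\sXi)\bigr)(\psi) = \int_0^\tau \scal{G(u(t,\cdot))\psi(t,\cdot),dW(t)}\;.
\end{equ}
Combined with the previous display, this shows that for every such $\psi$,
\begin{equ}
\scal{u,\psi} = \scal{P \star \bigl(\one_{t>0} H(u)\bigr),\psi} + \scal{Pu_0,\psi} + \int_0^\tau \scal{G(u(t,\cdot))(P\star\psi)(t,\cdot),dW(t)}\;,
\end{equ}
which upon testing against $\psi$ of the form $(\d_t - \d_x^2)\phi$ with $\phi$ a smooth test function supported in $(0,\tau)\times S^1$ gives the standard weak formulation of \eref{e:SPDE} in the It\^o sense. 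Uniqueness of weak solutions for the nonlinear stochastic heat equation with coefficients of bounded first derivative (see \cite{DPZ}) then identifies $u$ with the classical It\^o solution on $[0,\tau]$.

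The main obstacle is verifying adaptedness of $\hat G(U)$ rigorously: the fixed point $U$ is the limit of Picard iterates in $\CD^{\gamma,0}_\CU$ and a priori only the $\one$-coefficient is directly seen to be adapted; the higher-order coefficients need to be tracked through the iteration. This requires using that (i) by \eref{e:propFP}, $U - \CI(\hat H(U)+\hat G(U)\sXi) \in \bar \CT$ has polynomial coefficients determined by Taylor data of the reconstructed function via \cite[Thm~3.10]{Regularity}, and (ii) the coefficients of $\CI(\hat G(U)\sXi)$ along higher-order basis elements are, by \eref{e:admissible}, integrals of $K$ against expressions built from the coefficients of $U$ at earlier times, using the non-anticipativity of $K$ crucially. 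A secondary technical point is that Theorem~\ref{theo:ident} is stated for a deterministic stopping time $T$, whereas $\tau$ is a genuine stopping time; one handles this by approximating $\tau$ from above by countable-valued stopping times and passing to the limit using the continuity statements in Theorem~\ref{theo:gen}, or equivalently by first proving the identification on the deterministic set $\{t \le T\} \cap \{t \le \tau\}$ for each deterministic $T$ and then letting $T \to \infty$.
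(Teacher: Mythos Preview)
Your proposal is essentially correct and follows the same strategy as the paper: apply $\CR$ to both sides of \eref{e:FP}, invoke Theorem~\ref{theo:ident} with $V = \hat G(U)$ to identify $\CR(\hat G(U)\sXi)$ with an It\^o integral against $W$, and conclude by uniqueness of weak solutions.

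A few remarks on the differences and on one misreading. The paper's proof is slightly more streamlined: rather than staying in mild form, it first passes to the distributional PDE $\d_t u = \d_x^2 u + H(u) + \CR(\hat G(U)\sXi)$ (citing the mild/weak equivalence from \cite{Walsh}), then uses Lemma~\ref{lem:indicFcn} to multiply by $\one_{[0,t]}$ for stopping times $t \le \tau$, and finally tests against purely spatial $\psi$ to obtain the weak formulation in one step. Your route of remaining in mild form and testing against space-time $\psi$ is equivalent in spirit, but the displayed identity involving $(P\star\psi)$ is imprecise: the convolution should be with the time-reversed kernel, and one then has to check that the resulting test function still has support in $(0,\infty)\times S^1$ as required by Theorem~\ref{theo:ident}. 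Converting to PDE form first, as the paper does, avoids this bookkeeping entirely.

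Your final paragraph contains a misreading: Theorem~\ref{theo:ident} is \emph{not} stated for deterministic $T$ --- it explicitly allows $T$ to be a bounded stopping time. So no approximation of $\tau$ by countable-valued stopping times is needed; one simply applies the theorem with $T = t \wedge \tau$ for deterministic $t$ (which is bounded), exactly as the paper does. The paper also does not spell out the verification of adaptedness or the moment bound for $\hat G(U)$; these are treated as implicit in the construction, so while your discussion of them is correct, it is more detailed than necessary. Finally, the paper cites \cite{Walsh} rather than \cite{DPZ} for uniqueness of weak solutions.
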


\begin{proof}
Applying the reconstruction operator to both sides of \eqref{e:FP} and using the equivalence
between weak and mild solutions for the heat equation with a distributional inhomogeneity
(see the proof of Theorem 3.2 in \cite{Walsh}), we deduce 
that $u$ satisfies the distributional identity
\begin{equ}
\d_t u = \d_x^2 u  + H(u) + \CR \bigl(\hat G(U)\sXi\bigr)\;,
\end{equ}
on $(0,\tau)\times S^1$. Since we know that $u \in \CC^\eta$, each term appearing in this
identity belongs to $\CC^\alpha$ for some $\alpha > -2$, so we can use again Lemma~\ref{lem:indicFcn}
to multiply this identity with $\one_{[0,t]}$ for any stopping time $t \le \tau$.
Testing against a smooth test function $\psi$ of $x$ and applying Theorem~\ref{theo:ident}, 
we thus obtain the identity
\begin{equs}
\scal{u(t,\cdot),\psi} &= \scal{u_0,\psi} + \int_0^t \scal{u(s,\cdot),\d_x^2 \psi}\,ds
+ \int_0^t \scal{H(u(s,\cdot)), \psi}\,ds \\
&\quad + \int_0^t \scal{ G(u(s,\cdot))\psi , dW(s)}\;.
\end{equs}
This is indeed the definition of a weak solution to \eqref{e:SPDE}, the uniqueness of which
was shown in \cite{Walsh}, see Exercise 3.4.
\end{proof}

\bibliographystyle{./Martin}
\bibliography{./refs}

\end{document}